\newtheorem{Proposition}{Proposition}
\newtheorem{Lemma}{Lemma}
\newtheorem{Theorem}{Theorem}
\newtheorem{Corollary}{Corollary}
\newtheorem{Conjecture}{Conjecture}
\newcommand{\proj}{\mathbb{P}}
\newcommand{\Z}{\mathbb{Z}}
\newcommand{\barr}{\overline}
\newcommand{\rarr}{\rightarrow}
\newcommand{\oh}{{\mathcal{O}}}
\newcommand{\com}{\mathbb{C}}
\newcommand{\Q}{\mathbb{Q}}
\newcommand{\R}{\mathbb{R}}
\newcommand{\lan}{\langle}
\newcommand{\ran}{\rangle}
\newcommand{\ZZ}{{\mathbb{Z}}}
\newcommand{\bpf}{\noindent {\em Proof.} }
\newcommand{\epf}{\qed \vspace{+10pt}}
\begin{document}

\title{Gromov-Witten theory and
Noether-Lefschetz theory}
\author{D. Maulik and R. Pandharipande}
\dedicatory{Dedicated to J. Harris on the occasion of his 60th birthday}
\date{ September 2012}
\maketitle

\begin{abstract}
Noether-Lefschetz divisors in the moduli of $K3$ surfaces are the
loci corresponding
to Picard rank at least 2. 
We relate the degrees of the Noether-Lefschetz divisors
in 1-parameter families of $K3$ surfaces  
to the Gromov-Witten theory of the 3-fold total space.
The reduced $K3$ theory and the Yau-Zaslow formula play
an important role. We use
results of Borcherds and Kudla-Millson for $O(2,19)$ lattices 
to determine 
the Noether-Lefschetz degrees in classical families of
$K3$ surfaces of degrees 2, 4, 6 and 8.
For the quartic $K3$ surfaces,  
the Noether-Lefschetz degrees 
are proven to be the Fourier coefficients of an explicitly
computed modular form of weight 21/2 and
level 8. The interplay with mirror symmetry is discussed.  We close
with a conjecture on the Picard ranks of moduli spaces of $K3$ surfaces.
\end{abstract}

\setcounter{tocdepth}{1}
\tableofcontents

\setcounter{section}{-1}
\section{Introduction}

\subsection{K3 families}
Let $C$ be a nonsingular complete curve, and let 
$$\pi: X \rarr C$$
be a 1-parameter family of nonsingular quasi-polarized $K3$ surfaces.
Let $L\in \text{Pic}(X)$ denote the quasi-polarization of
 degree
$$\int_{K3} L^2=l\in 2\mathbb{Z}^{>0}.$$
The family $\pi$ yields a morphism,
$$\iota_\pi: C \rarr \mathcal{M}_{l},$$
to the 19 dimensional 
moduli space of quasi-polarized $K3$ surfaces of
degree $l$. A review of the definitions can be found in Section \ref{nlnums}.

\subsection{Noether-Lefschetz numbers}
Noether-Lefschetz numbers are defined
by the intersection of $\iota_\pi(C)$ with Noether-Lefschetz 
divisors in $\mathcal{M}_l$. Noether-Lefschetz divisors
can be described
via Picard lattices or Picard classes.
We briefly review the two approaches.

Let $(\mathbb{L},v)$
be a rank 2 integral lattice 
with an even symmetric bilinear form
 $$\langle, \rangle : \mathbb{L} \times \mathbb{L} \rarr \mathbb{Z}$$
and a
distinguished primitive vector $v\in \mathbb{L}$ 
satisfying
$$\langle v,v \rangle =l.$$
The invariants of $(\mathbb{L},v)$ 
are the discriminant $\bigtriangleup\in \mathbb{Z}$ and the coset
$$\delta \in \left(\frac{\mathbb{Z}}{l\mathbb{Z}}\right)/\pm.$$
If the data are presented as  
$$\mathbb{L}_{h,d}=
 \left( \begin{array}{cc}
l & d  \\
d & 2h-2  \end{array} \right), \ \ v= 
\left( \begin{array}{c}
1   \\
0   \end{array} \right),
$$
then
the discriminant is
$$\bigtriangleup_l(h,d)= -\det
 \left| \begin{array}{cc}
l & d  \\
d & 2h-2  \end{array} \right| =d^2- 2lh+2l$$
and the coset is
$$\delta =  d \ \text{mod} \ l\ \  \in \left(\frac{\mathbb{Z}}{l\mathbb{Z}}\right)/\pm.$$

Two lattices $(\mathbb{L}_{h,d},v)$ and $(\mathbb{L}_{h',d'},v')$ are
equivalent if and only if 
$$\bigtriangleup_l(h,d)= \bigtriangleup_l(h',d') \ \ \text{and} \ \
\delta_{h,d}=\delta_{h',d'}.$$
However, not all pairs $(\bigtriangleup,\delta)$ are
realized.

The first type of Noether-Lefschetz divisor is defined by specifying a
Picard lattice. Let 
$$P_{\bigtriangleup,\delta} \subset \mathcal{M}_l$$
be the closure of the locus of
quasi-polarized $K3$ surfaces $(S,L)$ of degree $l$ for which
$(\text{Pic}(S),L)$ is of rank 2 with 
discriminant
$\bigtriangleup$ and coset $\delta$.
By the Hodge index theorem, $P_{\bigtriangleup,\delta}$ is empty unless
$\bigtriangleup>0$.

The second type of Noether-Lefschetz divisor is defined by
specifying a Picard class. In case $\bigtriangleup_l(h,d)> 0$,
let
$${D}_{h,d}\subset \mathcal{M}_l$$
have support on the locus of quasi-polarized $K3$ surfaces $(S,L)$ for which there
exists a class
$\beta\in \text{Pic}(S)$
satisfying
\begin{equation*}
\int_{S} \beta^2  = 2h-2 \ \ \text{and} \ \ \int_{S} \beta \cdot L
 = d.
\end{equation*}
More precisely, $D_{h,d}$ is the weighted sum
\begin{equation}\label{c67}
D_{h,d}= \sum_{\bigtriangleup,\delta} \mu(h,d\ |\bigtriangleup,\delta)\cdot 
[P_{\bigtriangleup,\delta}]
\end{equation}
where the multiplicity
$$\mu(h,d\ |\bigtriangleup,\delta)\in \{0,1,2\}$$
 is defined to be the number of elements $\beta$ of the
lattice $(\mathbb{L},v)$ associated to $(\bigtriangleup,\delta)$
satisfying
\begin{equation} \label{ggggg}
\langle \beta,\beta\rangle  = 2h-2 \ \ \text{and} \ \ \langle \beta,v\rangle 
 = d.
\end{equation}
If no lattice corresponds to $(\bigtriangleup,\delta)$, the multiplicity
$\mu(h,d\ |\bigtriangleup,\delta)$ vanishes and $P_{\bigtriangleup,\delta}$
is empty. If the multiplicity is nonzero, then
$$\bigtriangleup | \bigtriangleup_l(h,d).$$
Hence, the sum  on the right of
\eqref{c67} has only finitely many terms.

As relation \eqref{c67} is easily seen to be triangular,
the divisors $P_{\bigtriangleup,\delta}$ and $D_{h,d}$ are essentially
equivalent. However, the divisors $D_{h,d}$ will be seen to
have better formal properties.

A natural approach to studying the divisors $D_{h,d}$ is via
intersections with test curves. In case $\bigtriangleup_l(h,d)>0$,
the Noether-Lefschetz number $NL^\pi_{h,d}$ 
is
the  classical intersection
product
\begin{equation}\label{def11}
NL^\pi_{h,d} =\int_C \iota_\pi^*[D_{h,d}].
\end{equation}
If $\bigtriangleup_l(h,d)<0$, the divisor $D_{h,d}$ vanishes
by the Hodge index theorem.
A definition of $NL^\pi_{h,d}$ for all values $\bigtriangleup_l(h,d)\geq 0$ 
is 
given by classical intersection theory in the period domain for
$K3$ surfaces     
in Section \ref{nlnums}.

The divisibility of a nonzero element $\beta$ of
a lattice is the maximal positive integer $m$ dividing $\beta$.
Refined divisors $D_{m,h,d}$ are defined by 
\begin{equation*}
D_{m,h,d}= \sum_{\bigtriangleup,\delta} \mu(m,h,d\ |\bigtriangleup,\delta)\cdot 
[P_{\bigtriangleup,\delta}]
\end{equation*}
where the multiplicity
$$\mu(m,h,d\ |\bigtriangleup,\delta)\in \{0,1,2\}$$
 is the number of elements $\beta$ of divisibility $m$ of the
lattice $(\mathbb{L},v)$ associated to $(\bigtriangleup,\delta)$
satisfying \eqref{ggggg}. Refined
Noether-Lefschetz number are defined by
\begin{equation*}
NL^\pi_{m,h,d} =\int_C \iota_\pi^*[D_{m,h,d}].
\end{equation*}

\subsection{Invariants}\label{nlll}
We will study three types of invariants associated to a  
1-parameter family $\pi$ of quasi-polarized $K3$ surfaces
in case the total space $X$ is nonsingular:
\begin{enumerate}
\item[(i)] the Noether-Lefschetz numbers  of $\pi$,
\item[(ii)] the Gromov-Witten invariants of $X$,
\item[(iii)] the reduced Gromov-Witten invariants of the
$K3$ fibers. 
\end{enumerate}
The Noether-Lefschetz numbers (i) are classical intersection
products while the Gromov-Witten invariants (ii)-(iii) 
are quantum in origin.

The Gromov-Witten invariants (ii) of the  3-fold $X$ and the
reduced Gromov-Witten invariants (iii) of a $K3$ surface
are defined via 
integration against virtual classes of moduli spaces
of stable maps. We view both of these Gromov-Witten theories in terms
of the associated BPS state
counts defined by Gopakumar and Vafa \cite{GV1,GV2}.

Let $n_{g,d}^X$ denote the Gopakumar-Vafa invariant of $X$
of genus $g$ for $\pi$-vertical curve classes of degree $d$
with respect to $L$. Let
$r_{g,m,h}$ denote the Gopakumar-Vafa reduced $K3$ invariant of 
genus $g$ and curve class $\beta\in H_2(K3,\mathbb{Z})$
of divisibility $m$ 
satisfying 
$$\int_{K3} \beta^2 =2h-2.$$
A review of these quantum invariants is presented in
Section \ref{gwrev}.

A geometric result intertwining the invariants (i)-(iii)
is derived in Section \ref{tmm} 
by a comparison of
the reduced and usual deformation theories of maps of curves
to
the $K3$ fibers of $\pi$.

\begin{Theorem} \label{oooo}
 {\em For $d>0$,}
$$n_{g,d}^X= \sum_{h} \sum_{m=1}^\infty
r_{g,m,h}\cdot  NL_{m,h,d}^\pi.$$
\end{Theorem}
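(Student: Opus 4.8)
The plan is to reduce the Gromov--Witten theory of $X$ in $\pi$-vertical curve classes to the reduced Gromov--Witten theory of the individual $K3$ fibers, with the Noether--Lefschetz numbers entering as the ``count'' of fibers (and of Picard classes inside them) in which the relevant curve class becomes algebraic; the asserted identity of Gopakumar--Vafa invariants then follows by applying the purely formal BPS transform to both sides.

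First I would analyze the moduli space $\overline{M}_g(X,\gamma_d)$, where $\gamma_d$ is a $\pi$-vertical class of $L$-degree $d>0$. Since $\pi_*\gamma_d=0$ and $C$ is a curve, for a connected stable map $f\colon\Sigma\to X$ in class $\gamma_d$ the composition $\pi\circ f$ is constant; as $d>0$ the map $f$ is non-constant, so its image lies in a single fiber $X_p$, $p\in C$, and carries a class $\beta\in H_2(X_p,\Z)$ with $\int_{X_p}\beta\cdot L=d$. This produces a morphism $\overline{M}_g(X,\gamma_d)\rarr C$ whose image lies in the locus where such a $\beta$ is of type $(1,1)$, i.e.\ in $\iota_\pi^{-1}$ of the Noether--Lefschetz divisors; by the Hodge index theorem only classes with $\int_{X_p}\beta^2=2h-2$ and $\bigtriangleup_l(h,d)\ge 0$ occur. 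Over a Noether--Lefschetz point $p$, the fiber of this morphism is (a union of copies of) the moduli space $\overline{M}_g(X_p,\beta)$ of stable maps to the $K3$ surface $X_p$.

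The technical heart is a comparison of obstruction theories, and this is the step I expect to require the most care. The normal bundle $N_{X_p/X}$ is trivial, being pulled back from the point $p\in C$, so for $f\colon\Sigma\to X_p\subset X$ one has $0\to f^*T_{X_p}\to f^*T_X\to \oh_\Sigma\to 0$, hence in $K$-theory $[R\Gamma(f^*T_X)]=[R\Gamma(f^*T_{X_p})]+[R\Gamma(\oh_\Sigma)]$, where $R\Gamma(\oh_\Sigma)$ contributes a trivial deformation line together with the rank-$g$ obstruction bundle $\hodge^\vee$. On the $K3$ side, the reduced obstruction theory is obtained from the usual one by removing the trivial obstruction line coming from $H^{2,0}(X_p)$ via $T_{X_p}\cong\Omega_{X_p}$. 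Matching these, I would show that over each Noether--Lefschetz component the virtual class of $\overline{M}_g(X,\gamma_d)$ is cut out of the reduced class of $\overline{M}_g(X_p,\beta)$ by the Euler class $e(\hodge^\vee)=(-1)^g\lambda_g$, with the integer multiplicity $\mu(m,h,d\,|\,\bigtriangleup,\delta)$ recording the number of $\beta$ of divisibility $m$ realizing the given discriminant data; the normal $C$-direction is accounted for by the first-order obstruction to deforming $f$ off the fiber, which along $\iota_\pi(C)$ is governed by the derivative of the period map. The cases where $\iota_\pi(C)$ fails to be transverse to a Noether--Lefschetz divisor, and the definition of $NL^\pi_{m,h,d}$ for all $\bigtriangleup_l(h,d)\ge 0$, would be handled by passing to the period domain for $K3$ surfaces as in Section~\ref{nlnums}.

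Assembling the pieces: deformation invariance of the reduced $K3$ theory (reviewed in Section~\ref{gwrev}) shows that the reduced invariant of $(X_p,\beta)$ depends only on $\int_{X_p}\beta^2=2h-2$ and the divisibility $m$. Summing over fibers $p\in C$ and classes $\beta$ inside them, the total count of pairs $(p,\beta)$ of divisibility $m$ with self-intersection $2h-2$ is exactly $\int_C\iota_\pi^*[D_{m,h,d}]=NL^\pi_{m,h,d}$, by the definition of the refined Noether--Lefschetz number. Hence, genus by genus, the connected Gromov--Witten series of $X$ in $\pi$-vertical classes of $L$-degree $d$ equals $\sum_h\sum_{m\ge1}NL^\pi_{m,h,d}$ times the corresponding reduced $K3$ series. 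Applying the Gopakumar--Vafa/BPS transform --- the identical formal operation on each side --- passes from Gromov--Witten series to Gopakumar--Vafa invariants and yields $n^X_{g,d}=\sum_h\sum_{m=1}^\infty r_{g,m,h}\cdot NL^\pi_{m,h,d}$, as claimed.
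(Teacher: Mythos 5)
Your overall strategy is the same as the paper's: reduce to the Gromov--Witten statement via the formal BPS transform, observe that vertical stable maps land in single fibers, and compare the obstruction theory of maps to $X$ with the reduced obstruction theory of maps to the fiber, with the discrepancy producing $(-1)^g\lambda_g$ once the off-fiber deformation direction cancels against the semiregularity obstruction via the derivative of the period map (this cancellation is exactly Lemma 2 of the paper). However, there is a genuine gap: your final assembly treats the intersection of $\iota_\pi(C)$ with the Noether--Lefschetz divisors as a finite count of pairs $(p,\beta)$, which silently assumes that every $(1,1)$-class $\beta$ contributing to $NL^\pi_{m,h,d}$ occurs at isolated points of $C$. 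For many families (any family whose fibers have generic Picard rank at least $2$, including the STU model to which the theorem is applied), a class $\beta$ remains of type $(1,1)$ over \emph{all} of $C$, so the relevant classes form a finite local system $B_\infty \rarr C$ rather than a finite set of points. This containment cannot be perturbed away, and your phrase about handling non-transversality ``by passing to the period domain'' does not address it: it is an excess-dimensional intersection, not a tangency.

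In this excess case both sides of the identity must be computed as integrals over the local system, and matching them is roughly half of the paper's argument. On the Noether--Lefschetz side the contribution is $\int_{B_\infty} c_1(\mathrm{Norm}) = \int_{B_\infty} c_1(K^*)$, where $K$ is the line bundle with fiber $H^0(X_\xi,K_{X_\xi})$; on the Gromov--Witten side one must work with the $\pi$-relative moduli space of maps, use $[\overline{M}_g(\pi,\epsilon)]^{vir} = c_1(K^*)\cap[\overline{M}_g(\pi,\epsilon)]^{red}$, and integrate $c_g(\hodge_g^*\otimes T_C)$ to extract $R_{g,m,h}\cdot\int_{B_\infty}c_1(K^*)$. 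Your proposal contains no version of this computation. A secondary, smaller issue: at isolated intersection points of multiplicity greater than $1$, the period-map derivative paired with $\beta$ vanishes to higher order, so your obstruction-theory cancellation fails there as stated; the paper resolves this by a local holomorphic perturbation of the section $\sigma$ to reduce to multiplicity $1$, justified by deformation equivalence of both sides, and some such device is needed in your argument as well.
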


Theorem \ref{oooo} is the main geometric result of
the paper. The proof is given in Section \ref{tmm}.

\subsection{Applications}
Since Theorem 1 relates three distinct geometric invariants,
the result can be effectively used in several directions.

An application for studying reduced invariants of $K3$ surfaces is given in \cite{KMPS}.
A central conjecture discussed in Section \ref{bpsk3}
is the {\em independence}{\footnote{If $m^2$ does not divide $2h-2$,
then $r_{g,m,h}=0$. The independence is conjectured
only when $m^2$ divides $2h-2$. When we write $r_{g,m,h}$,
the divisibility condition is understood to hold.}}
 of $r_{g,m,h}$ on $m$. 
In genus 0, the independence is the non-primitive Yau-Zaslow
conjecture proven in \cite{KMPS} as a consequence of Theorem 1.

The approach taken there is the following.  For a specific
1-parameter family of $K3$ surfaces, known in the physics literature as the STU model,
the BPS states $n^{STU}_{0,d}$ are known by proven
mirror transformations and the Noether-Lefschetz
numbers $NL_{m,h,d}^{STU}$ can by exactly determined. Theorem 1 is then
used in \cite{KMPS} to solve for $r_{0,m,h}$:
$$r_{0,m,h} = r_{0,1,h}, \ \ \ \  \ \sum_{h\geq 0} r_{0,1,h} = 
\prod_{n\geq 1} \frac{1}{(1-q^n)^{24}} \ .$$
The genus 1 results
$$r_{1,m,h} = r_{1,1,h} = -\frac{h}{12}\  r_{0,1,h}$$
are an easy consequence, see Section \ref{bpsk3}.
We write $r_{g,m,h} = r_{g,h}$ independent of $m$ for $g=0,1$.

Using \cite{KMPS}, the genus 0 and 1 specialization takes
a much simpler form.
\begin{Corollary} \label{ooooo}
 {\em For $g\leq 1$ and $d>0$,}
$$n_{g,d}^X= \sum_{h=g}^\infty 
r_{g,h}\cdot  NL_{h,d}^\pi.$$
\end{Corollary}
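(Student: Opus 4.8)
The plan is to derive Corollary \ref{ooooo} as a specialization of Theorem \ref{oooo} using the $m$-independence of the reduced Gopakumar-Vafa invariants in genus $0$ and $1$. First I would substitute $r_{g,m,h}=r_{g,h}$, valid for $g\leq 1$ by the results of \cite{KMPS} recalled in Section \ref{bpsk3}, into the formula of Theorem \ref{oooo} and factor $r_{g,h}$ out of the sum over divisibilities:
$$n_{g,d}^X=\sum_{h}\sum_{m=1}^{\infty} r_{g,m,h}\cdot NL^\pi_{m,h,d}=\sum_{h}r_{g,h}\sum_{m=1}^{\infty} NL^\pi_{m,h,d}.$$
The remaining tasks are to identify the inner sum with $NL^\pi_{h,d}$, to justify the rearrangement, and to check that the outer sum may be taken over $h\geq g$.

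For the inner sum, the key point is purely lattice-theoretic: every nonzero element $\beta$ of a rank $2$ lattice $(\mathbb{L},v)$ has a unique divisibility $m\geq 1$, so the set of $\beta$ solving \eqref{ggggg} is partitioned by divisibility. Hence $\mu(h,d\,|\,\bigtriangleup,\delta)=\sum_{m=1}^{\infty}\mu(m,h,d\,|\,\bigtriangleup,\delta)$ for every $(\bigtriangleup,\delta)$, which gives $D_{h,d}=\sum_{m\geq 1}D_{m,h,d}$, and therefore, after pulling back along $\iota_\pi$ and integrating over $C$, $NL^\pi_{h,d}=\sum_{m\geq 1}NL^\pi_{m,h,d}$. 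This sum is in fact finite: if $\langle\beta,v\rangle=d$ and $m\mid\beta$ then $m\mid d$, so only $m\in\{1,\dots,d\}$ contribute (here $d>0$ is used). The same bound $m\leq d$ makes the double sum of Theorem \ref{oooo} a finite sum for each fixed $d$, which legitimizes the interchange above.

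For the range of summation, $r_{0,h}$ is defined only for $h\geq 0$, and the relation $r_{1,h}=-\tfrac{h}{12}\,r_{0,h}$ from Section \ref{bpsk3} gives $r_{1,0}=0$; so in both cases the terms with $h<g$ vanish and the sum starts at $h=g$. One may also observe that the sum is effectively finite on the geometric side as well, since $D_{h,d}$ vanishes by the Hodge index theorem once $\bigtriangleup_l(h,d)<0$, i.e.\ for $h>\tfrac{d^2}{2l}+1$. I do not expect a genuine obstacle here: all the real content sits in the $m$-independence statement imported from \cite{KMPS}, and what remains is the elementary bookkeeping of divisibility classes together with the finiteness observation needed to rearrange the sum.
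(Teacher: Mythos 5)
Your overall route is the same as the paper's: specialize Theorem \ref{oooo} using the $m$-independence $r_{g,m,h}=r_{g,h}$ for $g\leq 1$ imported from \cite{KMPS}, and collapse the divisibility sum via $NL^\pi_{h,d}=\sum_{m\geq 1}NL^\pi_{m,h,d}$, which is exactly the paper's relation \eqref{zz23} (your partition-by-divisibility argument for $D_{h,d}=\sum_m D_{m,h,d}$ is the correct justification, and the bounds $m\mid d$ and $\bigtriangleup_l(h,d)\geq 0$ correctly make all sums finite).

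There is, however, one genuine soft spot: the truncation of the $h$-sum from below. You dismiss the terms with $h<0$ by saying that ``$r_{0,h}$ is defined only for $h\geq 0$,'' but this is not so. The reduced invariants $r_{g,m,h}$ are defined for every $h$, since classes $\beta$ with $\int\beta^2=2h-2<0$ exist in Picard lattices and have nonvanishing Noether-Lefschetz numbers (for instance, the locus of quartics containing a line contributes to $NL^\pi_{0,1}$ via a class of square $-2$, and $r_{0,0}=1\neq 0$ there, so these terms are not vacuous). What is actually needed is the vanishing $r_{g,m,h}=0$ for $h<0$ (together with $r_{1,m,0}=0$), and this is a theorem rather than a convention: it is Lemma \ref{aaas} of the paper, proved by representing a primitive class of square $2h-2<-2$ as $\pm([s]+h[f])$ on an elliptically fibered $K3$ and observing that the relevant moduli spaces of stable maps are empty (the square $-2$ case is handled by a rigid $(-1,-1)$-curve computation). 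Without this input the sum in Theorem \ref{oooo} cannot be started at $h=g$. With Lemma \ref{aaas} cited, your argument is complete and coincides with the paper's.
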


By Corollary \ref{ooooo}, the Gromov-Witten invariants 
$n_{g,d}^X$  are completely determined by the
Noether-Lefschetz numbers of $\pi$ for any 1-parameter
family of quasi-polarized $K3$ surfaces.
The result may be viewed as giving a fully classical
interpretation of the Gromov-Witten invariants
of $X$ in $\pi$-vertical classes.

Theorem 1 
can also be used to constrain the
 Noether-Lefschetz degrees themselves.
An important approach to the Noether-Lefschetz numbers (already used
in the STU calculation) is via results of 
Borcherds \cite{borch} and Kudla-Millson  \cite{kudmil}.
The Noether-Lefschetz numbers of $\pi$ are proven to be  the
Fourier coefficients of a vector-valued modular 
form.{\footnote{While the paper \cite{borch,kudmil}
have considerable overlap, we will follow the
point of view of Borcherds.}}
For several classical families of $K3$ surfaces,
Corollary \ref{ooooo} in genus 0 provides
an {alternative} method of calculating the Noether-Lefschetz
numbers via the invariants $n_{0,d}^X$.
Together, we obtain a remarkable sequence of identities intertwining
hypergeometric series from mirror transformations (calculating
$n_{0,d}^X$) 
and modular forms. The Harvey-Moore identity \cite{hm1} for the
STU model is a special case.

As a basic example, we provide a complete calculation of
the Noether-Lefschetz numbers for
the family of
$K3$ surfaces  determined by a Lefschetz pencil of
quartics in $\proj^3$. 
The required mirror symmetry calculations (iii) for the
quartic pencil have long been established rigorously \cite{giv1,giv2}.
We give the derivation of the Noether-Lefschetz numbers via 
Gromov-Witten calculations
in Section \ref{quarticcalc}.
The resulting hypergeometric-modular identity follows 
immediately in Section \ref{modid}.
A second approach to calculating Noether-Lefschetz numbers directly
via 
more sophisticated
modular form techniques is explained for quartics and 
several other classical families in Section \ref{dnl}.

Once the Noether-Lefschetz numbers are calculated
for the 1-paramet\-er family $\pi$, Corollary \ref{ooooo}
yields the genus 1 Gromov-Witten invariants of $X$
in $\pi$-vertical classes. There are very few
methods for the exact calculation of genus 1 invariants in
Calabi-Yau geometries.{\footnote{See \cite{Zinger} for
a different mathematical approach to genus 1 invariants
for complete intersections.}} Corollary \ref{ooooo} provides
a new class of complete solutions.

\subsection{Heterotic duality}
In rather different terms,
approach (i)-(iii) was pursued in the string theoretic
work of Klemm, Kreuzer, Riegler, and Scheidegger \cite{germans} 
with the goal of calculating
the BPS counts $n_{g,d}^X$ from the genus 0 values $n^X_{0,d}$. 
Heterotic duality was used in \cite{germans} for (i) 
since the connection to the intersection theory of the Noether-Lefschetz
divisors 
$$D_{h,d}\subset \mathcal{M}_l$$
and the work of Borcherds was not 
made.
The perspective of \cite{germans} 
can be turned upside down by using Gromov-Witten
theory to calculate the Noether-Lefschetz numbers.
On the other hand, modularity allows the calculations
of \cite{germans} to be pursued in much greater generality.

In fact, the back and forth here between heterotic duality and mathematical
results is  older. Borcherds' paper on automorphic functions
\cite{borch2} which underlies \cite{borch} was motivated in part
by the work of Harvey and Moore \cite{hm1,hm2} on heterotic duality. 
The first higher genus results for $K3$ fibrations were by
Mari\~no and Moore \cite{MM}.

Finally, we mention the circle of ideas here can be considered for interesting
isotrivial families of $K3$ surfaces with double Enriques 
fibers \cite{klemmm,mp}. While heterotic duality arguments
apply there, Borcherds' result does not directly apply.

\subsection{Modular forms} \label{mform}
Let $A$ and $B$ be modular forms of weight $1/2$ and level 8,
$$A= \sum_{n\in \mathbb{Z}} q^{\frac{n^2}{8}}, \ \ \
B= \sum_{n\in \mathbb{Z}} (-1)^n  q^{\frac{n^2}{8}}.$$
Let $\Theta$ be the modular form of weight $21/2$ and level 8 
defined by
\begin{eqnarray*}
2^{22} \Theta &=&\ \ 3A^{21}-81 A^{19}B^2 -627 A^{18}B^3 -14436 A^{17}B^4 \\
& & -20007 A^{16}B^5  -169092 A^{15}B^6 -120636 A^{14}B^7\\
& &   -621558 A^{13}B^8
-292796 A^{12}B^9 -1038366 A^{11}B^{10}\\
& &  -346122 A^{10}B^{11}
-878388 A^{9} B^{12} -207186 A^8 B^{13}\\
& &  -361908 A^7 B^{14} -56364 A^6 B^{15} -60021 A^5 B^{16}\\
& &  -4812 A^4 B^{17}
-1881 A^3 B^{18} -27 A^2 B^{19}+ B^{21}.
\end{eqnarray*}
We can expand $\Theta$ as a series in $q^{\frac{1}{8}}$,
$$ \Theta = -1 +108 q +320 q^{\frac{9}{8}}+50016 q^{\frac{3}{2}}+ 76950q^2 \ldots .$$
The modular form $\Theta$  first appeared in calculations of \cite{germans}.

Let $\pi$ be the family of quasi-polarized $K3$ surfaces determined
by a Lefschetz pencil of quartics in $\proj^4$.
Let $\Theta[m]$ denote the coefficient of $q^m$ in $\Theta$.

\setcounter{Theorem}{1}
\begin{Theorem}
The Noether-Lefschetz numbers of the quartic pencil
$\pi$ are coefficients of $\Theta$,
$$NL^{\pi}_{h,d} = \Theta\left[ \frac{\bigtriangleup_4(h,d)}{8}\right].$$
\end{Theorem}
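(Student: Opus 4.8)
The plan is to deduce the formula $NL^\pi_{h,d}=\Theta[\bigtriangleup_4(h,d)/8]$ by combining the \emph{a priori} modularity of Noether-Lefschetz numbers with the Gromov-Witten theory of the total space, using Corollary \ref{ooooo} as the bridge between the two. First I would record the modularity input. For quartic $K3$ surfaces the polarization lattice $\langle 4\rangle$ has discriminant group $\Z/4\Z$, so the relevant period domain is that of an $O(2,19)$ lattice of level $8$; by the theorems of Borcherds \cite{borch} and Kudla-Millson \cite{kudmil}, the numbers $NL^\pi_{h,d}$ depend only on $(\bigtriangleup_4(h,d),\delta)$ and are the Fourier coefficients of a modular form of weight $21/2=1+19/2$ and level $8$ --- more precisely of a vector-valued form whose three components are indexed by the cosets $\{0\}$, $\{\pm1\}$, $\{2\}$ in $(\Z/4\Z)/\pm$. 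Since $\bigtriangleup_4(h,d)\equiv d^2\pmod{8}$, these three components assemble into a single series in $q^{1/8}$ whose integer, $1/8$-congruent and $1/2$-congruent exponents record the three cosets, exactly the bookkeeping displayed by $\Theta$. This places the Noether-Lefschetz series in an explicit finite-dimensional space $M$, and I would compute $\dim M$ and note that a bounded number of Fourier coefficients determines an element of it. One then checks that $\Theta$ itself lies in $M$: the generators $A,B$ are modular of weight $1/2$ and level $8$, so each monomial $A^aB^{21-a}$ is modular of weight $21/2$ and level $8$, and the explicit combination defining $\Theta$ has the coset behavior required of a Noether-Lefschetz series.

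Next comes the Gromov-Witten input. The total space $X$ of the quartic pencil is the smooth hypersurface of bidegree $(1,4)$ in $\proj^1\times\proj^3$, namely the blow-up of $\proj^3$ along the (degree $16$) base curve of the pencil; it is a complete intersection in a toric variety with $-K_X=\pi^*\oh(1)$, and although $X$ is not Calabi-Yau the $\pi$-vertical classes satisfy $c_1\cdot\beta=0$. Givental's mirror theorem \cite{giv1,giv2} therefore computes the genus-$0$ Gromov-Witten invariants of $X$, and hence the BPS counts $n^X_{0,d}$ in $\pi$-vertical classes of $L$-degree $d$, for all $d$: concretely, one solves the associated hypergeometric Picard-Fuchs system, applies the mirror map, and reads off $n^X_{0,d}$ for $d=1,\dots,N$ with $N$ exceeding the bound coming from $M$. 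Now invoke Corollary \ref{ooooo}, which gives $n^X_{0,d}=\sum_h r_{0,h}\,NL^\pi_{h,d}$ with $r_{0,h}$ the Yau-Zaslow numbers ($\sum_h r_{0,h}q^h=\prod_{n\geq1}(1-q^n)^{-24}$), the sum being finite since $NL^\pi_{h,d}=0$ whenever $\bigtriangleup_4(h,d)<0$ by the Hodge index theorem. These linear relations for $d=1,\dots,N$, together with the containment of the Noether-Lefschetz series in $M$, pin that series down; since the series built from $\Theta$ lies in $M$ and satisfies the same relations --- an identity one verifies directly, and which is precisely the hypergeometric-modular identity of Section \ref{modid} --- the two series coincide, proving the theorem.

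The main obstacle is the Gromov-Witten step together with the matching of conventions around it: identifying the $\pi$-vertical curve classes of $L$-degree $d$ with the correct toric Kähler parameter on the $(1,4)$ hypersurface, running mirror symmetry on a threefold whose canonical class is only pulled back from the base, and extracting genuine integer BPS invariants $n^X_{0,d}$ to sufficiently high order. A secondary point is making the final linear-algebra step unconditional --- showing the relations for $d\leq N$ really do cut $M$ down to the single point --- which comes down to the explicit description of $M$ and its dimension from the first step. An independent route, carried out for quartics and for the degree $2$, $6$, $8$ families in Section \ref{dnl}, is to compute the Noether-Lefschetz series directly inside $M$ by Borcherds' machinery, bypassing Gromov-Witten theory altogether.
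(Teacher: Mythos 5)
Your overall strategy --- use Borcherds' modularity to confine the Noether-Lefschetz generating series to an explicit finite-dimensional space of weight $21/2$ forms, then pin it down with the linear relations supplied by Corollary \ref{ooooo} and a mirror-symmetry computation of genus $0$ BPS counts --- is exactly the paper's. But there is a genuine gap in how you feed the quartic pencil into Corollary \ref{ooooo}. You apply it directly to the $(4,1)$ hypersurface $X_{4,1}\subset \proj^3\times\proj^1$. That family has $108$ nodal fibers, so it is \emph{not} a family of quasi-polarized $K3$ surfaces in the sense of Section \ref{quasp}: the classifying map to $\mathcal{M}_4$, and hence $NL^{\pi}_{h,d}$ itself, is not defined at the nodal fibers, and the proof of Theorem \ref{oooo} (which compares the reduced obstruction theory of the $K3$ fibers with the obstruction theory of $X$, fiber by fiber) requires every fiber to be a smooth $K3$. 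The paper repairs this by passing to the hyperelliptic double cover $C_{53}\rarr\proj^1$ branched at the $108$ nodal points, taking a small resolution $\widetilde{X}$ of the pulled-back family to get an honest quasi-polarized family $\widetilde{\pi}$, setting $NL^{\pi}_{h,d}=\frac{1}{2}NL^{\widetilde{\pi}}_{h,d}$, and then proving $n^{\widetilde{X}}_{g,d}=n^{X_{4,2}}_{g,d}$ by degeneration-formula and conifold-transition arguments, so that the mirror computation is run on the genuinely Calabi-Yau $(4,2)$ hypersurface. Your proposal omits both the resolution construction (needed even to make sense of the statement) and this comparison of BPS invariants; computing genus $0$ invariants of the non-Calabi-Yau $(1,4)$ hypersurface by Givental's theorem, even if carried out correctly, does not by itself justify the identity $n^{X}_{0,d}=\sum_h r_{0,h}NL^{\pi}_{h,d}$ for a family with singular fibers.

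The modularity half of your plan is sound and in fact anticipates both routes taken in the paper: Proposition 5 places the scalar series $\phi^{\pi}$ in the $22$-dimensional space of degree $21$ homogeneous polynomials in $A$ and $B$ and determines it from $22$ of the infinitely many relations, while Section \ref{dnl} works in the $3$-dimensional space of vector-valued forms of type $\rho_4^{*}$, which is the sharper version of your ``compute $\dim M$'' step and gives the Gromov-Witten-free derivation you mention at the end.
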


\subsection{Classical quartic geometry}

Let $V$ be a 4-dimensional $\com$-vector space. A quartic hypersurface
in $\proj(V)$ is determined by an element of $\proj(\text{Sym}^4 V^*)$. Let
$$\mathcal{U} \subset \proj(\text{Sym}^4 V^*)$$
be the Zariski open set of nonsingular quartic hypersurfaces.
Since $[S]\in \mathcal{U}$ corresponds to a polarized $K3$ surface of
degree 4, we obtain a canonical morphism
$$\phi: \mathcal{U} \rarr \mathcal{M}_4.$$ 
If $\bigtriangleup_4(h,d)> 0$, the pull-back
$$\mathcal{D}_{h,d}=\phi^{-1}(D_{h,d})\subset \mathcal{U}$$
is a closed subvariety of pure codimension 1.
As a Corollary of Theorem 2, we obtain 
a complete calculation of the degrees of
the hypersurfaces 
$$\overline{\mathcal{D}}_{h,d} \subset \proj(\text{Sym}^4 V^*).$$


\setcounter{Corollary}{1}
\begin{Corollary} If $\bigtriangleup_4(h,d)> 0$,
the degree of $\overline{\mathcal{D}}_{h,d}$ is
\begin{equation*}
\text{\em deg}(\overline{\mathcal{D}}_{h,d}) = 
\Theta\left[\frac{\bigtriangleup_4(h,d)}{8}
\right] -
\Psi\left[\frac{\bigtriangleup_4(h,d)}{8}
\right]
\end{equation*}
where the correction term is
$$\Psi = 108 \sum_{n> 0} q^{n^2}.$$
\end{Corollary}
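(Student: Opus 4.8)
The plan is to evaluate $\deg\overline{\mathcal{D}}_{h,d}$ by intersecting with a general line $\ell\subset\proj(\text{Sym}^4 V^*)$ — equivalently, a general Lefschetz pencil $\pi$ of quartics — and to compare with the Noether-Lefschetz number $NL^\pi_{h,d}=\Theta[\bigtriangleup_4(h,d)/8]$ furnished by Theorem 2. Let $p_1,\dots,p_N$ be the members of $\pi$ lying on the discriminant hypersurface $\mathrm{Disc}\subset\proj(\text{Sym}^4 V^*)$; for a general pencil $N=\deg\mathrm{Disc}=4\cdot 3^3=108$, each $S_{p_i}$ is a general $1$-nodal quartic, and $\ell\setminus\{p_1,\dots,p_{108}\}\subset\mathcal{U}$ with $\iota_\pi=\phi$ on this locus. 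Since $\overline{\mathcal{D}}_{h,d}$ is the closure of the divisor $\mathcal{D}_{h,d}=\phi^{-1}(D_{h,d})\subset\mathcal{U}$, it has no component inside $\mathrm{Disc}$, hence meets $\mathrm{Disc}$ in codimension $\geq 2$, so a general line $\ell$ meets $\overline{\mathcal{D}}_{h,d}$ only at nonsingular quartics; consequently the portion of $NL^\pi_{h,d}=\int_C\iota_\pi^*[D_{h,d}]$ contributed by $\ell\setminus\{p_1,\dots,p_{108}\}$ equals $\deg\overline{\mathcal{D}}_{h,d}$. Thus $NL^\pi_{h,d}=\deg\overline{\mathcal{D}}_{h,d}+\sum_{i=1}^{108}m_{p_i}$, with $m_{p_i}$ the local contribution of $\iota_\pi^*[D_{h,d}]$ at $p_i$, and it remains to show $\sum_i m_{p_i}=\Psi[\bigtriangleup_4(h,d)/8]$.

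The heart of the proof is the local analysis at a node. The minimal resolution $\widetilde S_{p_i}$ of the $1$-nodal quartic is a quasi-polarized $K3$ surface of degree $4$ with exceptional $(-2)$-curve $E$ and $E\cdot L=0$; for a general pencil $\mathrm{Pic}(\widetilde S_{p_i})=\langle L,E\rangle$, a rank-$2$ lattice of discriminant $8$ and coset $0$. Thus $\iota_\pi$ extends across $p_i$ with $\iota_\pi(p_i)$ a general point of $P_{8,0}$, and $P_{8,0}$ is the only component of $D_{h,d}$ passing through $\iota_\pi(p_i)$ — any other $P_{\bigtriangleup,\delta}$ would force a sublattice of $\langle L,E\rangle$ containing $L$ of discriminant $8k^2\neq 8$, impossible at a general point of $P_{8,0}$. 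By definition $\mu(h,d\mid 8,0)\neq 0$ iff there is $\beta\in\langle L,E\rangle$ with $\beta^2=2h-2$ and $\beta\cdot L=d$; solving the lattice equations, such $\beta$ exists exactly when $\bigtriangleup_4(h,d)/8$ is a perfect square $n^2$ — necessarily then $4\mid d$ and $n\geq 1$ — in which case the two solutions $\beta=\tfrac d4 L\pm nE$ give $\mu(h,d\mid 8,0)=2$. Finally, each of these classes contributes $\tfrac12$ rather than $1$ to $NL^\pi_{h,d}$: the monodromy of the quartic pencil around a nodal member is the Picard–Lefschetz reflection $s_E$ with $s_E(E)=-E$, so the period of $E$ has a square-root singularity at $p_i$ and vanishes there to order $\tfrac12$ along $C$. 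Hence $m_{p_i}=\mu(h,d\mid 8,0)\cdot\tfrac12=1$ when $\bigtriangleup_4(h,d)/8$ is a positive square and $m_{p_i}=0$ otherwise, so $\sum_{i=1}^{108}m_{p_i}=108\cdot\big[\,\bigtriangleup_4(h,d)/8\in\{1,4,9,\dots\}\,\big]=\Psi[\bigtriangleup_4(h,d)/8]$, and combining with Theorem 2 gives $\deg\overline{\mathcal{D}}_{h,d}=\Theta[\bigtriangleup_4(h,d)/8]-\Psi[\bigtriangleup_4(h,d)/8]$.

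The delicate step is this local computation at the $108$ nodal members: identifying the vanishing $(-2)$-class, the limiting point on the reflective divisor $P_{8,0}$, and — the crux — pinning down the factor $\tfrac12$ by matching $P_{8,0}$ against the period-domain definition of the Noether-Lefschetz number, together with the Bertini-type verification that a general pencil meets $\overline{\mathcal{D}}_{h,d}$ transversally at nonsingular quartics and avoids the $p_i$. The remaining ingredients — the lattice bookkeeping for $\mu(h,d\mid 8,0)$, the congruence $\bigtriangleup_4(h,d)\equiv d^2\pmod 8$ (so that $\bigtriangleup_4(h,d)/8$ being a square forces $4\mid d$), and the emptiness of $\phi^{-1}(P_{8,0})$ (a nonsingular quartic has no $(-2)$-class orthogonal to the very ample $L$, by Riemann–Roch) — are routine.
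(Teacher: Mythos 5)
Your proof is correct and follows essentially the same route as the paper: the discrepancy between $NL^\pi_{h,d}$ and $\deg\overline{\mathcal{D}}_{h,d}$ is localized at the $108$ nodal members, whose resolved Picard lattice $\left(\begin{smallmatrix}4&0\\0&-2\end{smallmatrix}\right)$ admits exactly two classes $\tfrac{d}{4}L\pm nE$ precisely when $\bigtriangleup_4(h,d)=8n^2$. The paper makes your factor of $\tfrac12$ rigorous by passing to the double cover $\widetilde\pi$ (which is how $NL^\pi_{h,d}:=\tfrac12 NL^{\widetilde\pi}_{h,d}$ is defined in the first place), counting each of the two classes with multiplicity $1$ on $C_{53}$ and halving at the end --- the same computation as your Picard--Lefschetz square-root argument.
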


The correction term, obtained from the contribution of the
nodal quartics, is explained in Section \ref{c22}. 
Formulas for the degrees of $$\overline{\phi^{-1}(P_{\bigtriangleup,\delta})} \subset
\proj(\text{Sym}^4 V^*)$$
are easily obtained from \eqref{c67} and a parallel 
nodal analysis.
While Corollary 2 answers
a classical question about the Hodge theory of quartic $K3$ surfaces,
the method of proof is modern.

\subsection{Outline}

In Section \ref{nlnums}, we give a precise definition of Noether-Lefschetz numbers and establish several
 elementary properties.  
The definitions of BPS invariants for 3-folds and 
reduced Gromov-Witten invariants of $K3$ surfaces 
are recalled in Section \ref{gwrev}.
Two central conjectures about the reduced theory of $K3$ surfaces are stated in Section \ref{bpsk3}.
The proof of Theorem 1 is presented in Section \ref{tmm}.

We review of the work of Borcherds on Heegner divisors and explain the application to families of $K3$ surfaces
in Section \ref{modularforms}.
The results are applied with Theorem 1 to prove Theorem 2 via mirror symmetry calculations in Section \ref{quarticcalc}.  
A direct
 approach to Noether-Lefschetz degrees for classical familes of $K3$ surfaces of degrees $2, 4, 6$, and $8$
is given  in Section \ref{dnl} via a deeper study
 of vector-valued modular forms.
Finally, in Section \ref{picr}, we state a conjecture regarding Picard ranks
of moduli spaces of $K3$ surfaces of degree $l$ .

 \subsection{Acknowledgments}
Discussions with A. Klemm about the calculations in \cite{germans} played
a crucial role. We are grateful to D. Huybrechts for
a careful reading of the paper. 

We thank R. Borcherds, J. Bruinier, J. Bryan, B. Conrad,
 I. Dolgachev, S. Grushevsky,  E. Looijenga, G. Moore, 
K. Ranestad,
P. Sarnak, E. Scheidegger,
C. Skinner,
 A. Snowden, W. Stein, G. Tian, I. Vainsencher,
and W. Zhang for conversations
about Noether-Lefschetz divisors, reduced invariants of
$K3$ surfaces, and modular forms.

D. M. was partially supported by an NSF graduate fellowship.
R.P. was partially support by NSF grant DMS-0500187 and a Packard
foundation fellowship. The paper was written in the
spring of 2007 and revised in 2009.

\section{Noether-Lefschetz numbers}\label{nlnums}
\subsection{Picard lattice}
Let $S$ be a $K3$ surface. The second cohomology of $S$ is a rank 22 lattice
with intersection form 
\begin{equation}\label{ccet}
H^2(S,\mathbb{Z}) \stackrel{\sim}{=} U\oplus U \oplus U \oplus E_8(-1) \oplus E_8(-1)
\end{equation}
where
$$U
= \left( \begin{array}{cc}
0 & 1 \\
1 & 0 \end{array} \right)$$
and 
$$E_8(-1)=  \left( \begin{array}{cccccccc}
 -2&    0 &  1 &   0 &   0 &   0 &   0 & 0\\
    0 &   -2 &   0 &  1 &   0 &   0 &   0 & 0\\
     1 &   0 &   -2 &  1 &   0 &   0 & 0 &  0\\
      0  & 1 &  1 &   -2 &  1 &   0 & 0 & 0\\
      0 &   0 &   0 &  1 &   -2 &  1 & 0&  0\\
      0 &   0&    0 &   0 &  1 &  -2 &  1 & 0\\ 
      0 &   0&    0 &   0 &   0 &  1 &  -2 & 1\\
      0 & 0  & 0 &  0 & 0 & 0 & 1& -2\end{array}\right)$$
is the (negative) Cartan matrix. The intersection form \eqref{ccet}
is even.

The {\em divisibility} of $\beta\in H^2(S,\mathbb{Z})$ is
the maximal positive integer dividing $\beta$.
If the divisibility is 1,
$\beta$ is {\em primitive}.
Elements with
equal divisibility and norm are equivalent up to orthogonal transformation 
 of $H^2(S,\mathbb{Z})$, see \cite{CTC}.

The Hodge decomposition of the second cohomology of $S$ has dimensions $(1,20,1)$,
$$H^2(S,\mathbb{Z}) \otimes_{\mathbb{Z}} \com = H^{2,0}(S,\com) \oplus
H^{1,1}(S,\com) \oplus H^{0,2}(S,\com).$$
The {\em Picard lattice} of $S$ is
$$\text{Pic}(S)= H^2(S,\mathbb{Z}) \cap H^{1,1}(S,\com).$$

\subsection{Quasi-polarization}\label{quasp}
A {\em quasi-polarization} on $S$ is a line bundle $L$ with primitive Chern
class $c_1(L)\in H^2(S,\mathbb{Z})$
satisfying
$$\int_S L^2>0 \ \ \text{and} \ \ \int_S L\cdot [C] \geq 0$$
for every curve $C\subset S$.
A sufficiently high tensor power $L^n$ of a quasi-polarization is 
base point free and determines a birational morphism 
$$S\rarr \widetilde{S}$$
contracting A-D-E configurations of
$(-2)$-curves on $S$ \cite{Sand}.
Hence, every quasi-polarized $K3$ surface $(S,L)$ is algebraic.

Let $X$ be a 
compact 3-dimensional complex manifold  equipped with a holomorphic
line bundle
$L$
and a holomorphic
map
$$\pi:X \rarr C$$
to a nonsingular complete curve. The triple $(X,L,\pi)$
is a {\em family of quasi-polarized $K3$ surfaces of degree $l$} if
the fibers $(X_\xi, L_\xi)$
are quasi-polarized $K3$ surfaces satisfying
$$\int_{X_\xi} L_\xi^2=l$$
for every $\xi\in C$.
The family $(X,L,\pi)$ yields a morphism,
$$\iota_\pi: C \rarr \mathcal{M}_{l},$$
to the moduli space of quasi-polarized $K3$ surfaces of
degree $l$.

We will often refer to the triple $(X,L,\pi)$ just
by $\pi$.
Associated to $\pi$ is the projective variety $\widetilde{X}$
obtained from the relative
quasi-polarization,
$$X \rarr \widetilde{X}\subset \proj (R^0\pi_*(L^n)^*)\rarr C,$$
for sufficiently large $n$.
The complex manifold $X$ may be a non-projective small resolution of the
singular projective variety $\widetilde{X}$.

\subsection{Period domain}
Let $V$ be a rank 22 integer lattice with intersection form $\langle,\rangle$
obtained from the second homology of a $K3$ surface,
$$V\stackrel{\sim}{=}U\oplus U \oplus U \oplus E_8(-1) \oplus E_8(-1).$$
A
1-dimensional subspace $\com\cdot \omega\in V\otimes _{\Z} \com$ satisfying
\begin{equation}
\label{w023}
\langle \omega, \omega \rangle =0 \ \ \text{and} \ \ \lan \omega,\barr{\omega} \ran > 0
\end{equation}
determines a Hodge structure of type $(1,20,1)$
on $V$, 
$$V\otimes_\Z \com  = 
V^{2,0} \oplus V^{1,1} \oplus V^{0,2} =
\com\cdot \omega \ \oplus \ (\com\cdot
 \omega \oplus \com \cdot 
\barr{\omega})^\perp \ \oplus\ \com\cdot \barr{\omega}.$$
Conversely, a Hodge structure of type $(1,20,1)$ determines a 1-dimensional
subspace $\com\cdot \omega$ satisfying \eqref{w023}.

The moduli space $M^V$ of Hodge structures of type $(1,20,1)$ on $V$ is
therefore an analytic open set
 of the 20-dimensional  nonsingular isotropic 
quadric $Q$,
$$M^V\subset Q\subset \proj(V \otimes_\Z \com).$$
The moduli space $M^V$ is the {\em period domain}.

For nonzero $\beta\in V$, let $D^V_\beta \subset M^V$ denote the
locus of Hodge structures for which $\beta\in V^{1,1}$.
Certainly,
$$D^V_\beta = M^V \cap \beta^\perp\subset \proj(V\otimes_\Z\com)$$
where $\beta^\perp$ is the linear space orthogonal to $\beta$.
Hence, $D^V_\beta$ is simply a 19-dimensional
hyperplane section of $M^V$.

\subsection{Local systems}
\label{lsy}
Let $(X,L,\pi)$ be a quasi-polarized family of $K3$ surfaces 
over a nonsingular curve $C$.
Let
$$\mathcal{V}=R^2\pi_*(\mathbb{Z})\rarr C$$
denote the rank 22 local system 
determined by the middle cohomology of the fibration 
$$\pi:X \rarr C.$$
The local system $\mathcal{V}$ is  
equipped with the fiberwise intersection form $\langle,\rangle$.

Let $\mathcal{M}^{\mathcal{V}}$ be the $\pi$-relative moduli space of Hodge structures
$$\mu:\mathcal{M}^{\mathcal{V}} \rarr C$$
with fiber
$$\mu^{-1}(\xi)= M^{\mathcal{V}_\xi}.$$
The moduli space $\mathcal{M}^{\mathcal{V}}$ is a complex manifold, and
$\mu$ is a locally trivial fibration in the analytic topology.

Duality and  homological push-forward yield a canonical  
map
$$\epsilon: \mathcal{V} \rarr H_2(X,\Z)$$
where the right side can be viewed as a trivial local system.
Let $H_2(X,\mathbb{Z})^\pi$ denote the kernel of the
projection map
$$\pi_*: H_2(X,\mathbb{Z}) \rarr H_2(C,\mathbb{Z}).$$
For $h\in \mathbb{Z}$ and 
$\gamma\in H_2(X,\mathbb{Z})^\pi$, we will define
a Noether-Lefschetz number $NL_{h,\gamma}^\pi$ for the $K3$
fibration $\pi$.

Informally,
$NL_{h,\gamma}^\pi$ 
counts the number of points $\xi\in C$ for which 
there exists an integral class 
$\beta \in V_\xi$ of type $(1,1)$
satisfying
$$\lan \beta, \beta \ran=2h-2 \ \ \text{and} \ \ 
\epsilon(\beta)=\gamma.$$
The formal definition is given in Section \ref{nnn}.

\subsection{Classical intersection}\label{nnn}
Define the relative divisor $${\mathcal{D}}
^{\mathcal{V}}_{h,\gamma} \subset \mathcal{M}^{\mathcal{V}}$$
by 
the set of Hodge structures which contain a
  class 
$\beta \in {\mathcal{V}}_\xi$ of type $(1,1)$
satisfying 
$$\lan \beta, \beta \ran=2h-2 \ \ \text{and} \ \
\epsilon(\beta)=\gamma.$$
When $\mathcal{M}^{\mathcal{V}}$ is trivialized{\footnote{We take
trivializations obtained 
from trivializing $R^2\pi_*(\mathbb{Z})$ compatibly
with $\epsilon$ .}}
over a Euclidean
open set $U\subset C$,
$$\mathcal{M}^{\mathcal {V}_U}= M^V \times U,$$
the subset
$\mathcal{D}^{\mathcal{V}}_{h,\gamma}$ restricts to
$$\mathcal{D}^{\mathcal{V}_U}_{h,\gamma} = \cup_{\beta}\  D^V_\beta \times U$$
where the union is over all
$\beta \in V$ 
 satisfying
$$\lan \beta, \beta \ran=2h-2 \ \ \text{and} \ \
\epsilon(\beta)=\gamma.$$
Hence, $\mathcal{D}^{\mathcal{V}}_{h,\gamma}\subset 
\mathcal{M}^{\mathcal{V}}$ {is} a countable union of divisors.

The Noether-Lefschetz number is defined by a tautological
intersection product.
The family $\pi$ determines a canonical section
$$\sigma: C \rarr \mathcal{M}^{\mathcal{V}}.$$
where  
$$ \sigma(\xi)=[H^{2,0}(X_\xi,\com)] \in \mathcal{M}^{\mathcal{V}_\xi}$$
is the Hodge structure determined by the
$K3$ surface $X_\xi$. 
Let
\begin{equation}\label{def22}
NL_{h,\gamma}^\pi = \int_C \sigma^*[\mathcal{D}^{\mathcal{V}}_{h,\gamma}].
\end{equation}
The divisor $\mathcal{D}^{\mathcal{V}}_{h,\gamma}$ may have 
infinitely many components. However,
by the finiteness result of Proposition 1, $NL_{h,\gamma}^{\pi}$ is
well-defined.

While $NL_{h,\gamma}^\pi$ is a classical intersection
number, an excess calculation is required in case
$\sigma(C) \subset \mathcal{D}^{\mathcal{V}}_{h,\gamma}$. The
informal counting interpretation is not always well-defined.

\begin{Proposition} 
$NL_{h,\gamma}^\pi$
is finite.
\end{Proposition}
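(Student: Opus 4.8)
The plan is to show that the sum defining $NL_{h,\gamma}^\pi$ has only finitely many nonzero contributions, i.e., that for fixed $h$ and $\gamma$ only finitely many classes $\beta \in V$ with $\langle\beta,\beta\rangle = 2h-2$ and $\epsilon(\beta) = \gamma$ can actually force a Hodge structure in the image $\sigma(C)$ to be special. The key point is that the relevant $\beta$'s are constrained to lie in a finite-rank sublattice once we exploit the structure of the family. First I would note that since $\epsilon(\beta) = \gamma$ is fixed, any two such classes $\beta, \beta'$ differ by an element of $\ker(\epsilon) \cap \mathcal{V}_\xi$; but $\ker(\epsilon)$ consists of classes vanishing in $H_2(X,\mathbb{Z})$, so the difference is in the kernel of the map from fiberwise $H^2$ to the cohomology of the total space, which by the Leray spectral sequence / invariant cycle considerations is controlled by the monodromy-invariant part and the radical of the restricted intersection form.

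The heart of the argument is a positivity/finiteness estimate. For $\xi \in C$ at which $D^V_\beta$ is met, $\beta$ becomes of type $(1,1)$, hence lies in $\mathrm{Pic}(X_\xi)$, which (being orthogonal to the holomorphic 2-form and containing the quasi-polarization $L_\xi$ with $\langle L_\xi, L_\xi\rangle = l > 0$) has signature $(1, \rho-1)$ by the Hodge index theorem. Thus on the primitive-to-$L_\xi$ part the form is negative definite. Writing $\beta = \frac{\langle\beta, L_\xi\rangle}{l} L_\xi + \beta^\perp$ with $\beta^\perp$ in the negative-definite orthogonal complement, the condition $\langle\beta,\beta\rangle = 2h-2$ together with $\langle\beta, L_\xi\rangle = \langle\gamma, L\rangle =: d$ (which is determined by $\gamma$, since $\epsilon(\beta) = \gamma$ pins down the intersection with the globally-defined $L$) gives $\langle\beta^\perp, \beta^\perp\rangle = 2h-2 - d^2/l$, a fixed negative number. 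A negative-definite lattice contains only finitely many vectors of given norm, but here the subtlety is that the ambient negative-definite lattice varies with $\xi$. To handle this I would instead work directly with the vanishing-cycle description: the classes $\beta$ with $\epsilon(\beta) = \gamma$ that arise lie in the $\mathbb{Z}$-span of $\gamma$ (pulled back via a splitting) together with the rank-$\le 1$ (or finite-rank) lattice of vanishing cycles of $\pi$; since the latter is a fixed finite-rank lattice on which the form is negative semidefinite modulo its radical, and the radical adds nothing to $\langle\beta,\beta\rangle$, we again land in "finitely many vectors of bounded negative norm in a fixed negative-definite lattice."

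Concretely, the steps in order: (1) reduce to counting $\beta \in V$ (in a fixed trivialization over a contractible $U$, compatibly over all of $C$ using the description in the footnote) with $\langle\beta,\beta\rangle = 2h-2$, $\epsilon(\beta) = \gamma$, and $D^V_\beta \cap \sigma(C) \ne \emptyset$ (or, in the excess case, $D^V_\beta$ contributing to the intersection product); (2) observe $\langle\beta, L\rangle$ is forced to equal a constant $d(\gamma)$; (3) for any $\xi$ in the relevant locus, use Hodge index on $\mathrm{Pic}(X_\xi) \ni \beta, L_\xi$ to bound $\beta$ in the negative-definite lattice $L_\xi^\perp \cap \mathrm{Pic}(X_\xi)$, and deduce $\langle\beta^\perp, \beta^\perp\rangle$ is a fixed value; (4) use that $\ker\epsilon$ restricted to each fiber (the "new" cycles not seen by the total space) is negative semidefinite of bounded rank — this is where I would invoke that $\pi$ has finitely many singular fibers and the vanishing cycles span a finite-rank group — so that modulo its radical, $\beta$ ranges over a fixed negative-definite lattice, in which only finitely many vectors have the prescribed norm; (5) conclude finiteness, and hence well-definedness of the intersection product \eqref{def22}, noting that in the excess case one still has only finitely many components along which an excess contribution can occur.

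The main obstacle will be step (4): making precise that the family of negative-definite lattices $L_\xi^\perp \cap \mathrm{Pic}(X_\xi)$, as $\xi$ varies, is "uniformly bounded" — i.e., that a class $\beta$ of fixed image $\gamma$ in $H_2(X,\mathbb{Z})$ cannot have arbitrarily large coordinates in the fiber lattice. I expect this to follow from the fact that $\epsilon$ has image a fixed lattice and kernel governed by the (finitely many) vanishing cycles, so that $\epsilon^{-1}(\gamma)$ is a coset of the fixed finite-rank kernel lattice $K = \ker\epsilon \subset V$; since the intersection form is negative semidefinite on $K$ (Picard–Lefschetz: vanishing cycles are $(-2)$-classes spanning a negative semidefinite sublattice) with radical $R$, and $\langle\beta,\beta\rangle$ depends only on the class of $\beta$ modulo $R$, the relevant $\beta$'s project to finitely many points of the finite-dimensional negative-definite space $K/R$ at fixed norm. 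Everything else is standard Hodge theory; this rigidity of $\epsilon^{-1}(\gamma)$ is the only point requiring care.
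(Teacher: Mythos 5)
There is a genuine gap, and it sits exactly where you yourself flagged it: step (4). Your finiteness mechanism is that $\epsilon^{-1}(\gamma)$ is a coset of $K=\ker\epsilon$ restricted to a fiber, and that the intersection form on $K$ is negative semidefinite (``vanishing cycles span a negative semidefinite sublattice''), so that only finitely many elements of the coset can have the fixed norm $2h-2$. That premise is false. Picard--Lefschetz says each individual vanishing cycle is a $(-2)$-class, but the lattice they span is the whole vanishing cohomology, i.e.\ the orthogonal complement of the monodromy invariants; for a Lefschetz pencil of quartics this is the primitive cohomology $L^\perp$, of signature $(2,19)$ --- very far from negative semidefinite. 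Consequently $\epsilon^{-1}(\gamma)$ genuinely contains infinitely many classes of norm $2h-2$, and the paper explicitly acknowledges that $\mathcal{D}^{\mathcal{V}}_{h,\gamma}$ may have infinitely many components. Your step (3) does not rescue this: the Hodge-index constraint at a point where $\beta$ becomes algebraic only says $2h-2-d^2/l\le 0$ with $d=\int_\gamma L$, a condition depending only on $(h,d)$ and hence satisfied by \emph{every} candidate $\beta$ in the coset simultaneously; it cannot rule out that infinitely many distinct candidates each become algebraic at some (varying) point of $C$. So the proposal never establishes that $\sigma(C)$ meets only finitely many components of $\mathcal{D}^{\mathcal{V}}_{h,\gamma}$, which is the entire content of the Proposition.

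The paper's proof supplies the missing boundedness by a different, genuinely geometric route. After arranging (by modifying $L$ along contracted divisors) that $L$ is $\pi$-relatively ample over an open $U\subset C$, one picks $k$ with $d+lk>0$ and $lk^2+2dk+2h-2>-4$ and shows by Serre duality and Riemann--Roch on the $K3$ fiber that $\tilde\beta=\beta+kc_1(L_\xi)$ is an \emph{effective} curve class. All such $\tilde\beta$ then have fixed degree $d+lk$ against $L$ and fixed Euler characteristic, so they are parametrized by a relative Hilbert scheme that is projective and of finite type over $U$; its finitely many irreducible components either dominate $U$ (giving the finitely many excess/local-system contributions) or sit over single points (giving the finitely many transverse intersections). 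The finitely many fibers outside $U$ are handled with an ample bundle chosen fiber by fiber. It is this finite-type property of the Hilbert scheme of curves of bounded degree and Euler characteristic --- not any definiteness of a lattice --- that forces the finiteness, and your argument would need to import something equivalent to it.
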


\bpf
Let $L$ be the quasi-polarization on $X$. If there
exists a point $\xi\in C$ for which $L_\xi$ is ample, then
$L$ is $\pi$-relatively ample over an open set
of $C$.
If $L_\xi$ is never ample, then the morphism 
$$X \rarr \widetilde{X} \subset \proj(R^0\pi_*(L^n))$$
for sufficiently large $n$
contracts divisors on $X$ which intersect the generic fiber
$X_\xi$ in (-2)-curves.
After modification{\footnote{A base change of $\pi: X \rarr
C$ is
not required since the modification can be
averaged over the symmetries of the (-2)-curve configuration.}} 
of $L$ by these contracted divisors, a
new quasi-polarization $L'$ of $X$ may be obtained which is
$\pi$-relatively ample over a nonempty open set of $C$.

We assume now (after possible modification) the quasi-polarization
$L$ is $\pi$-relatively ample over a nonempty open set $U\subset C$.
Let
$$d= \int_\gamma  L $$
be the degree of $\gamma$.
Let
$$l= \int_{X_\xi} L^2_\xi>0$$
be the degree of the $K3$ fibers of $\pi$.

Let
$\beta\in \mathcal{V}_\xi$ of type $(1,1)$ satisfy
$$\lan \beta, \beta \ran=2h-2 \ \ \text{and} \ \
\epsilon(\beta)=\gamma.$$
We will prove 
$$\sigma(C)\subset \mathcal{M}^{\mathcal{V}}$$
 intersects only finitely
many components of $\mathcal{D}^{\mathcal{V}}_{h,\gamma}$.

Let $k$ be an integer satisfying
$$d+lk>0 \ \ \text{and} \ \ lk^2+2dk+2h-2>-4.$$
The first step is to show
$$\tilde{\beta} = \beta+ kc_1({L}_\xi)$$
is an effective curve class on $X_\xi$ 
by Riemann-Roch.

Let $L_{\tilde{\beta}}$ denote the unique line
bundle on $X_\xi$ with 
$$c_1(L_{\tilde{\beta}})={\tilde{\beta}}.$$
By Serre duality,
$$H^2(X_\xi,L_{\tilde{\beta}})= H^0(X_\xi, L^*_{\tilde{\beta}})^*$$
Since
$$\lan c_1(L^*_{\tilde{\beta}}), L_\xi\ran \leq -d-lk <0,$$
$h^0(X_\xi, L^*_{\tilde{\beta}})$ vanishes.
Then, by Riemann-Roch,
\begin{eqnarray*}
h^0(X_\xi, L_{\tilde{\beta}}) & \geq & 
\chi(X_\xi, L_{\tilde{\beta}})-h^2(X_\xi,L_{\tilde{\beta}})\\
&  = & \chi(X_\xi,L_{\tilde{\beta}}) \\
& = & \frac{1}{2}\lan \tilde{\beta},\tilde{\beta} \ran +2\\
& > & 0.
\end{eqnarray*}
Hence, $\tilde{\beta}$ is an effective curve class on $X_\xi$.


Consider 
first the open set $U\subset C$
over which $L$ is $\pi$-relatively ample.
Let
$$\mathcal{H} \rarr U$$
be the $\pi$-relative Hilbert scheme parameterizing
of curves in $X_{\xi\in U}$ of degree
\begin{equation*}\label{ffr}
\lan \tilde{\beta}, c_1({L}_\xi)\ran= d+lk
\end{equation*}
and Euler characteristic 
\begin{equation*}\label{rrff}
\chi(X_\xi,\oh_{X_\xi}) - \chi(X_\xi, L_{\tilde{\beta}}^*) =
-\frac{1}{2}\lan \tilde{\beta}, \tilde{\beta} \ran =
-\frac{1}{2}(lk^2+2dk+2h-2).
\end{equation*}
The scheme ${\mathcal{H}}$
is projective over $U$ and of finite type.
 
An irreducible component $\mathcal{H}_{irr}\subset 
\mathcal{H}$ either dominates $U$ or
maps to a point ${\xi}\in U$.
In the former case, 
the classes of curves represented
by $\mathcal{H}_{irr}$ yield a {\em finite} monodromy invariant
subset of $\mathcal{V}$.
In the latter case, the curves represented
by $\mathcal{H}_{irr}$ yield a single element of 
$\mathcal{V}_{{\xi}}$.

After shifting the finiteness statements back by
$kc_1({L}_\xi)$, we obtain the finiteness of the
intersection geometry
\begin{equation}\label{fd122}
\sigma(C) \cap \mathcal{D}^{\mathcal{V}}_{h,\gamma}
\end{equation}
over $U\subset C$. 
Indeed,
the dominant components $\mathcal{H}_{irr}$ correspond to
finitely many excess intersections and the non-dominant components
correspond to finitely many true intersections.

Finally consider the complement $U^c \subset C$. The complement
is a finite set. For each $\xi^c \in U^c$, let $L^c_{\xi^c}$
be an ample line bundle. The above arugment using the
ample bundles $L^c_{\xi^c}$ for the fibers $X_{\xi^c}$
shows there are finitely many intersections in \eqref{fd122}
over $U^c \subset C$ as well.

We conclude the intersection geometry is finite over all of $C$ and
the product 
$$NL_{h,\gamma}^\pi= 
\int_C \sigma^*[\mathcal{D}^{\mathcal{V}}_{h,\gamma}]$$
is well-defined.
\epf

Let $\gamma_{{L}}$ denote the push-forward of the ample
class on the fibers,
$$\gamma_{{L}}= c_1({L}) \cap [X_\xi] 
\in H_2(X,\mathbb{Z})^\pi.$$
By an elementary comparison,
$$\sigma^*[\mathcal{D}_{h,\gamma}^{\mathcal{V}}] = 
\sigma^*[\mathcal{D}_{h+d+\frac{l}{2},
\gamma+\gamma_{{L}}}^{\mathcal{V}} ].$$
We obtain the following result.

\begin{Proposition}
$NL_{h,\gamma}^\pi = NL_{h+d+{\frac{l}{2}}, \gamma+\gamma_{{L}}}^\pi.$
\end{Proposition}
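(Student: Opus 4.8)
The plan is to show that the two relative divisors $\mathcal{D}_{h,\gamma}^{\mathcal{V}}$ and $\mathcal{D}_{h+d+\frac{l}{2},\gamma+\gamma_L}^{\mathcal{V}}$ actually coincide as subsets of $\mathcal{M}^{\mathcal{V}}$, and hence have the same pullback under $\sigma$. The key point is the bijection on integral $(1,1)$-classes given by $\beta \mapsto \beta + c_1(L_\xi)$. First I would check how the two defining numerical conditions transform. If $\beta \in \mathcal{V}_\xi$ is of type $(1,1)$ with $\langle \beta,\beta\rangle = 2h-2$ and $\epsilon(\beta)=\gamma$, set $\beta' = \beta + c_1(L_\xi)$; since $c_1(L_\xi)$ is itself of type $(1,1)$ on every fiber (it is the quasi-polarization), $\beta'$ is of type $(1,1)$ if and only if $\beta$ is. Next, $\langle \beta',\beta'\rangle = \langle\beta,\beta\rangle + 2\langle\beta, c_1(L_\xi)\rangle + \langle c_1(L_\xi),c_1(L_\xi)\rangle = (2h-2) + 2d + l$, where $d = \int_\gamma L = \langle\beta, c_1(L_\xi)\rangle$ (this uses $\epsilon(\beta)=\gamma$ to identify the intersection pairing on the fiber with the degree of $\gamma$ against $L$) and $l = \langle c_1(L_\xi), c_1(L_\xi)\rangle$. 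Thus $\langle\beta',\beta'\rangle = 2(h + d + \tfrac{l}{2}) - 2$, which is exactly the norm condition for the parameter $h' = h + d + \tfrac{l}{2}$. Finally, applying $\epsilon$ and using that $\epsilon(c_1(L_\xi)\cap[X_\xi]) = \gamma_L$ (this is essentially the definition of $\gamma_L$) gives $\epsilon(\beta') = \gamma + \gamma_L$.

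The next step is to record that this correspondence $\beta \leftrightarrow \beta'$ is a bijection between the indexing sets of the two unions of divisors: its inverse is $\beta' \mapsto \beta' - c_1(L_\xi)$, and it is fiberwise, compatible with the local system structure on $\mathcal{V}$ and with $\epsilon$ (since translation by $c_1(L)$ is a global operation on $\mathcal{V}$, not just fiberwise). Consulting the local description from Section \ref{nnn}: over a trivializing Euclidean open set $U$, $\mathcal{D}^{\mathcal{V}_U}_{h,\gamma} = \bigcup_\beta D^V_\beta \times U$ over $\beta$ satisfying the conditions for $(h,\gamma)$, and likewise for $(h+d+\tfrac{l}{2}, \gamma+\gamma_L)$; but the divisor $D^V_\beta$ depends only on $\beta^\perp$, and translation by $c_1(L)$ sets up a bijection of the index sets matching $\beta$ with $\beta'$. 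Hence the two countable unions of divisors are literally equal as subsets of $\mathcal{M}^{\mathcal{V}}$. It follows immediately that $\sigma^*[\mathcal{D}_{h,\gamma}^{\mathcal{V}}] = \sigma^*[\mathcal{D}_{h+d+\frac{l}{2},\gamma+\gamma_L}^{\mathcal{V}}]$ as operational intersection classes, including in the excess case where $\sigma(C)$ is contained in the divisor, since the excess intersection computation is performed on the identical geometric object; integrating over $C$ then yields the claimed identity of Noether-Lefschetz numbers, with finiteness on both sides guaranteed by Proposition 1.

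I expect the only subtlety, rather than a genuine obstacle, to be bookkeeping around the identification $\langle\beta, c_1(L_\xi)\rangle = \int_\gamma L = d$: one must be careful that $\epsilon(\beta)=\gamma$ together with the compatibility of $\epsilon$ with intersection pairings (Poincar\'e duality on the fiber pushed forward) genuinely gives this equality, and similarly that $\epsilon$ sends $c_1(L_\xi)\cap[X_\xi]$ to $\gamma_L$ by construction. Once those identifications are in place, the proof is a one-line translation argument. The phrase ``by an elementary comparison'' in the statement signals exactly this: the substance is the observation that $\mathcal{D}_{h,\gamma}^{\mathcal{V}}$ is invariant, as a set, under reindexing by $+c_1(L)$, so no real intersection theory beyond pulling back the same cycle is needed.
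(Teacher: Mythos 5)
Your translation map $\beta \mapsto \beta + c_1(L_\xi)$ is exactly the intended ``elementary comparison,'' and your numerical bookkeeping is right: $\langle \beta',\beta'\rangle = (2h-2)+2d+l = 2(h+d+\tfrac{l}{2})-2$ and $\epsilon(\beta') = \gamma + \gamma_L$, with $\langle \beta, c_1(L_\xi)\rangle = \int_\gamma L = d$ justified as you say. However, the central claim on which you hang the conclusion --- that $\mathcal{D}^{\mathcal{V}}_{h,\gamma}$ and $\mathcal{D}^{\mathcal{V}}_{h+d+\frac{l}{2},\gamma+\gamma_L}$ are \emph{literally equal as subsets} of $\mathcal{M}^{\mathcal{V}}$ --- is false. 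The relative period domain $\mathcal{M}^{\mathcal{V}}$ has $20$-dimensional fibers $M^{\mathcal{V}_\xi}$ parametrizing \emph{all} weight-two Hodge structures on $\mathcal{V}_\xi$, not only those for which $c_1(L_\xi)$ is of type $(1,1)$. For a general point $\omega \in M^{\mathcal{V}_\xi}$ one has $\langle \omega, c_1(L_\xi)\rangle \neq 0$, so $\langle\omega,\beta\rangle = 0$ does not imply $\langle\omega,\beta'\rangle=0$: the hyperplanes $\beta^\perp$ and $(\beta+c_1(L_\xi))^\perp$ are genuinely distinct, and the two countable unions of divisors are distinct subsets. Your sentence ``$\beta'$ is of type $(1,1)$ if and only if $\beta$ is'' is only valid for Hodge structures in which $c_1(L_\xi)$ is itself of type $(1,1)$, which is a proper $19$-dimensional sublocus of each fiber of $\mu$.

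The repair is short and keeps your strategy intact, but it is a needed idea rather than bookkeeping. The section $\sigma$ factors through the sublocus $\mathcal{M}^{\mathcal{V}}_{L} \subset \mathcal{M}^{\mathcal{V}}$ of Hodge structures annihilating $c_1(L_\xi)$, because $L_\xi$ is an honest line bundle on every fiber. On $\mathcal{M}^{\mathcal{V}}_{L}$ the linear forms $\langle\cdot,\beta\rangle$ and $\langle\cdot,\beta'\rangle$ coincide, so each pair $\beta^\perp$, $(\beta')^\perp$ restricts to the \emph{same} divisor of $\mathcal{M}^{\mathcal{V}}_{L}$ (with the same multiplicity, and with the same normal data governing any excess contribution along $\sigma(C)$). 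Pulling back through the factorization $\sigma = j\circ\sigma_L$ with $j:\mathcal{M}^{\mathcal{V}}_L \hookrightarrow \mathcal{M}^{\mathcal{V}}$ then gives $\sigma^*[\mathcal{D}^{\mathcal{V}}_{h,\gamma}] = \sigma^*[\mathcal{D}^{\mathcal{V}}_{h+d+\frac{l}{2},\gamma+\gamma_L}]$, which is precisely the identity the paper asserts before stating the Proposition. In particular your handling of the excess case (``the computation is performed on the identical geometric object'') needs this restriction to be literally true; as written, the two objects are not identical in $\mathcal{M}^{\mathcal{V}}$, only their restrictions to the locus containing $\sigma(C)$ are.
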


The proof
of Proposition 1 show the vanishing of the
Noether-Lefschetz number for high $h$.

\begin{Proposition}
For fixed $\gamma$, the numbers $NL_{h,\gamma}^\pi$
vanish for sufficiently high $h$.
\end{Proposition}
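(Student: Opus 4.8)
The plan is to revisit the proof of Proposition 1 and extract the quantitative bound hidden inside it. The key point is that in that argument we produced, for each class $\beta$ of type $(1,1)$ on some fiber $X_\xi$ satisfying $\lan\beta,\beta\ran = 2h-2$ and $\epsilon(\beta)=\gamma$, a shifted class $\tilde\beta = \beta + kc_1(L_\xi)$ which becomes \emph{effective} on $X_\xi$ by Riemann-Roch. I would now run this in reverse: for $h$ large, I want to show no such $\beta$ can exist on any fiber at all, so that $\mathcal{D}^{\mathcal{V}}_{h,\gamma}$ does not meet $\sigma(C)$ and the intersection number vanishes by pure dimension reasons. (One subtlety: one must treat the excess case where $\sigma(C)\subset \mathcal{D}^{\mathcal{V}}_{h,\gamma}$, but the contradiction below rules this out as well, since it shows the divisor is simply empty over $C$.)

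First I would fix $\gamma$ and hence its degree $d = \int_\gamma L$ and the fiber degree $l = \int_{X_\xi} L_\xi^2 > 0$ (after the modification of $L$ in Proposition 1 making it $\pi$-relatively ample over a nonempty open $U$; the finitely many points of $U^c$ can be handled separately with fiberwise ample bundles as before). Suppose $\beta \in \mathcal{V}_\xi$ is of type $(1,1)$ with $\lan\beta,\beta\ran = 2h-2$ and $\epsilon(\beta)=\gamma$. By the Hodge index theorem applied to the rank-$\leq 2$ sublattice spanned by $\beta$ and $c_1(L_\xi)$ inside $\mathrm{Pic}(X_\xi)$ — which has signature $(1,\rho-1)$ — the Gram matrix
\begin{equation*}
\begin{pmatrix} l & d \\ d & 2h-2 \end{pmatrix}
\end{equation*}
must have non-positive determinant, i.e. $2lh - 2l \leq d^2$, equivalently $h \leq \frac{d^2}{2l} + 1$. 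This already gives the desired vanishing: for $h > \frac{d^2}{2l}+1$ there is simply no type $(1,1)$ class $\beta$ on any fiber with $\epsilon(\beta)=\gamma$ and $\lan\beta,\beta\ran = 2h-2$, so $\mathcal{D}^{\mathcal{V}}_{h,\gamma}$ is empty over $U$; and over the finite set $U^c$ the same lattice-theoretic inequality applies verbatim since it only uses the intersection form on $H^2$ of each fiber together with the ample (hence positive-square) class. Hence $\sigma^*[\mathcal{D}^{\mathcal{V}}_{h,\gamma}] = 0$ for $h$ sufficiently large.

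The only genuine subtlety — and the step I would be most careful about — is whether $c_1(L_\xi)$ and $\beta$ span a lattice of rank $\leq 2$ of the asserted signature; if $\beta$ is proportional to $c_1(L_\xi)$ the $2\times 2$ determinant argument degenerates, but then $\beta = a\,c_1(L_\xi)$ forces $\epsilon(\beta) = a\gamma_L = \gamma$ and $2h-2 = a^2 l$, which again bounds $h$ in terms of $\gamma$ alone (since $a$ is then determined by $\gamma$). So in all cases $h$ is bounded above by an explicit function of $\gamma$, $l$, and $d$, and the proposition follows. One can phrase the whole thing as: the Hodge index theorem forces $\Delta_l(h,d) = d^2 - 2lh + 2l \geq 0$ (with $d$ read off from $\gamma$) whenever the relevant divisor is nonempty over $C$, which is exactly the numerical content already present in the introduction's remark that $D_{h,d}$ vanishes when $\bigtriangleup_l(h,d) < 0$.
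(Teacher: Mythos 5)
Your argument is correct, and it is cleaner and more quantitative than what the paper actually does. The paper's proof of this proposition is a one-line deferral to the proof of Proposition 1: there, for each fiber $X_\xi$, a class $\beta$ with $\sigma(\xi)\in\beta^\perp$ is shifted to an effective class $\tilde\beta=\beta+kc_1(L_\xi)$ of fixed degree $d+lk$, and the finiteness of the relative Hilbert scheme of curves of that degree bounds the possible values of $\lan\tilde\beta,\tilde\beta\ran$, hence of $h$. You instead apply the Hodge index theorem directly on each fiber: any type $(1,1)$ class $\beta$ with $\epsilon(\beta)=\gamma$ satisfies $l(2h-2)\leq d^2$, giving the explicit bound $h\leq \frac{d^2}{2l}+1$, with the proportionality degeneration handled separately (in fact the non-strict inequality already absorbs that case). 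This buys an effective bound $h_{top}$ in terms of $(d,l)$ alone, needs no Riemann--Roch shift, no Hilbert scheme, and not even the relative-ampleness modification of $L$, since the signature $(1,19)$ of the form on $H^{1,1}(X_\xi,\mathbb{R})$ holds for every $K3$ fiber; it is also exactly the mechanism the authors invoke in the introduction when they assert $D_{h,d}=0$ for $\bigtriangleup_l(h,d)<0$. One small imprecision: the divisor $\mathcal{D}^{\mathcal{V}}_{h,\gamma}\subset\mathcal{M}^{\mathcal{V}}$ is not itself empty for large $h$ (its components are hyperplane sections of the relative period domain, which exist for purely lattice-theoretic reasons); what your inequality shows is that no \emph{actual} fiber Hodge structure $\sigma(\xi)$ lies on it, i.e.\ $\sigma(C)\cap\mathcal{D}^{\mathcal{V}}_{h,\gamma}=\emptyset$, which is all that is needed for $\int_C\sigma^*[\mathcal{D}^{\mathcal{V}}_{h,\gamma}]=0$ and also disposes of the excess case.
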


The 
Noether-Lefschetz numbers $NL_{h,\gamma}(\pi)$ have
  non-trivial dependence on $\gamma$
despite the linear equivalence
$$D^V_\beta \cong D^V_{\beta'}$$
on $M^V$. The Noether-Lefschetz numbers involve also
the twisting of the local system $\mathcal{V}$ over $C$.

\subsection{Refinements}
The Noether-Lefschetz numbers $NL^\pi_{h,d}$ defined
in Section \ref{nlll} are obtained from the relation
\begin{equation}\label{yyy12}
NL^\pi_{h,d} = \sum_{\int_\gamma L=d} NL^\pi_{h,\gamma}.
\end{equation}
The finiteness of the sum on the right is a consequence of the
negative definiteness of the intersection matrix of divisors
in $X_\xi$ contracted by $L_\xi$.
The invariants $NL^\pi_{h,\gamma}$ may be viewed as
a refinement of $NL^\pi_{h,d}$ with the nonvanishing 
discriminant hypothesis lifted.

Further refined Noether-Lefschetz numbers may be defined
with respect to any additional monodromy invariant data. 
For example, the divisibility $m$ of an element 
$\beta\in \mathcal{V}_\xi$ is a monodromy invariant.
Let $${\mathcal{D}}
^{\mathcal{V}}_{m,h,\gamma} \subset \mathcal{M}^{\mathcal{V}}$$
be the divisor
of Hodge structures which contain a class
$\beta \in {\mathcal{V}}_\xi$ of type $(1,1)$
of divisibility $m$
satisfying 
$$\lan \beta, \beta \ran=2h-2 \ \ \text{and} \ \
\epsilon(\beta)=\gamma.$$
We define
$$NL^\pi_{m,h,\gamma} = \int_C \sigma^*[\mathcal{D}_{m,h,\gamma}].$$
The relation
\begin{equation}\label{zz23}
NL^\pi_{h,\gamma}= \sum_{m\geq 1} NL^\pi_{m,h,\gamma}
\end{equation}
certainly holds.

\subsection{Intersection theory of $\mathcal{M}_l$}
Let $v\in V$ be a vector of norm $l$, and let
$$\mathcal{M}^V_v = v^\perp \cap \mathcal{M}^V.$$
Let $\Gamma$ denote the group of orthogonal transformations of the
lattice $V$, and let
$$\Gamma_v \subset \Gamma$$
be the subgroup fixing $v$.
The moduli space of quasi-polarized $K3$ surfaces of degree $l$
is the quotient
$$\mathcal{M}_l = \mathcal{M}^V_v/ \Gamma_v.$$
The moduli space is a nonsingular orbifold. We refer the reader to
\cite{dolga} for a more detailed discussion.

In case $\bigtriangleup_l(h,d)\neq 0$,  the above 
construction of $\mathcal{M}_l$ shows the definitions of the
Noether-Lefschetz number by \eqref{def11} and \eqref{yyy12} agree.

\section{Gromov-Witten theory}\label{gwrev}
\subsection{BPS states for 3-folds}
Let $(X,L,\pi)$ be a quasi-polarized family of $K3$ surfaces.
While $X$ may not be a projective variety,
$X$ carries a $(1,1)$-form $\omega_K$ which is K\"ahler on the
$K3$ fibers of $\pi$. The existence of a fiberwise K\"ahler
form is sufficient to define Gromov-Witten theory for
vertical classes  
$$0\neq \gamma \in H_2(X,\mathbb{Z})^\pi.$$
The fiberwise K\"ahler form $\omega_K$ is obtained by a small
perturbation of the quasi-K\"ahler form obtained from the
quasi-polarization. The associated Gromov-Witten theory is
independent of the perturbation used.{\footnote{See
\cite{jlee,peng} for treatments of Gromov-Witten invariants
for fiberwise K\"ahler geometry.}}
 
Let $\overline{M}_{g}(X,\gamma)$ be the moduli space of
stable maps from connected genus $g$ curves to $X$.
Gromov-Witten theory is defined
by
integration against the virtual class,
\begin{equation}
\label{klk}
N_{g,\gamma}^X
 = \int_{[\overline{M}_{g}(X,\gamma)]^{vir}} 1.
\end{equation}
The expected dimension of the moduli space is 0.

The Gromov-Witten potential $F^X(\lambda,v)$ for nonzero vertical classes
is the series
$${F}^X=
\sum_{g\geq 0}\   \sum_{0\neq \gamma\in H_2(X,\mathbb{Z})^\pi}  
 N^X_{g,\gamma} \ \lambda^{2g-2} v^\gamma$$
where $\lambda$ and $v$ are the genus and curve class variables.
The
BPS counts $n_{g,\gamma}^X$
of Gopakumar and Vafa are uniquely defined 
by the following equation:
\begin{equation*}
F^X  =   \sum_{g\geq 0} \  \sum_{0\neq \gamma\in
H_2(X,\mathbb{Z})^\pi} 
 n_{g,\gamma}^X \ \lambda^{2g-2} \sum_{d>0}
\frac{1}{d}\left( \frac{\sin
({d\lambda/2})}{\lambda/2}\right)^{2g-2} v^{d\gamma}. 
\end{equation*}
Conjecturally, the invariants $n_{g,\gamma}^X$ are integral and
obtained from the cohomology of an as yet unspecified moduli
space of sheaves on $X$.

\subsection{Reduced theory}
\label{redth}

Let $C$ be a connected, nodal, genus $g$ curve. Let $S$
be a $K3$ surface, and let $\beta\in \text{Pic}(S)$
be a nonzero class.
The moduli space
$M_C(S,\beta)$ parameterizes maps from $C$ to 
$S$ of class $\beta$.
Let 
$$\nu: C \times M_C(S,\beta) \rightarrow M_C(S,\beta)$$
denote the projection, and let
$$f:  C \times M_C(S,\beta) \rightarrow S$$
denote the universal map.
The canonical morphism
\begin{equation}\label{reld}
R^\bullet \nu_*(f^* S)^\vee \rightarrow L_{M_C}^\bullet
\end{equation}
determines a perfect obstruction theory on $M_C(S,\beta)$, 
see \cite{Beh,BehFan,LiTian}.
Here, $L_{M_C}^\bullet$ denotes the cotangent complex of $M_C(S,\beta)$.

Let $\Omega_S$ denote the cotangent bundle of $S$.
Let $\Omega_\nu$ and $\omega_\nu$ denote respectively the sheaf
of relative differentials of $\nu$ and the relative dualizing sheaf of
$\nu$.
There are  canonical maps
\begin{equation}
f^*(\Omega_{S}) \rightarrow \Omega_\nu \rightarrow \omega_\nu
\end{equation}
The sections of the canonical bundle $H^0(S,K_S)$ determine
a 1-dimensional space of
holomorphic symplectic forms. Hence, there is a canonical
isomorphism
$$T_S \otimes H^0(S,K_S) \stackrel{\sim}{=} \Omega_S$$
where $T_S$ is the tangent bundle.
We obtain
a map
$$f^*(T_{S}) \rightarrow \omega_\nu \otimes (H^0(S,K_S))^\vee$$
and a map
\begin{equation}\label{c2r}
R^\bullet\nu_*(\omega_\nu)^\vee \otimes H^0(S,K_S)
 \rightarrow R^\bullet\nu_*(f^*T_{S})^\vee.
\end{equation}

From \eqref{c2r}, we obtain the cut-off map
$$\iota: \tau_{\leq -1} R^\bullet\nu_*(\omega_\nu)^\vee \otimes H^0(S,K_S)
  \rightarrow
R^\bullet\nu_*(f^*T_{S})^\vee.$$
The complex $\tau_{\leq -1} R^\bullet\nu_*(\omega_\nu)^\vee\otimes H^0(S,K_S)$ 
is represented
by a trivial bundle of rank $1$ tensored with
$H^0(S,K_S)$ in degree $-1$.
Consider the mapping cone $C(\iota)$ of $\iota$.
Certainly $R^\bullet\pi_*(f^*T_{S})^\vee$ is represented
by a two term complex. An elementary argument using nonvanishing $\beta\neq 0$
shows
the complex $C(\iota)$ is also two term.

By Ran's results{\footnote{The required deformation theory can also
be found in a recent paper by M. Manetti \cite{Man}. A different approach
to the construction of the reduced virtual class is available in \cite{ared}.
}} on deformation theory and the semiregularity map, 
there is a canonical map
\begin{equation}
\label{rvc}
C(\iota) \rightarrow L_{M_C}^\bullet
\end{equation}
induced by \eqref{reld}, see \cite{zzz2}.
Ran proves the obstructions to deforming maps from $C$ to
a holomorphic symplectic manifold lie in the kernel
of the semiregularity map. After dualizing,  Ran's result
precisely shows \eqref{reld} factors through the cone $C(\iota)$.

The map \eqref{rvc} defines a {\em new} perfect obstruction theory
on $M_C(S,\beta)$. The conditions of cohomology isomorphism in degree 0 and
the cohomology surjectivity in degree $-1$ are both induced from
the perfect obstruction theory \eqref{reld}. 
We view \eqref{reld} as the {\em standard} obstruction theory and
\eqref{rvc} as the {\em reduced} obstruction theory.

Following \cite{Beh, BehFan}, the morphism
 \eqref{rvc} is an obstruction theory
of maps to $S$ 
relative to the Artin stack
${\mathfrak M}_g$ of genus $g$ curves. 
A reduced absolute obstruction theory 
\begin{equation}\label{yoyoyo}
E^\bullet \rightarrow L_{\overline{M}_g(S,\beta)}^\bullet
\end{equation}
is
obtained via a distinguished triangle in the usual way, see 
\cite{Beh,BehFan,LiTian}.
The obstruction theory \eqref{yoyoyo} yields a reduced virtual
class
$$[\overline{M}_g(S,\beta)]^{red} \in A_{g}(\overline{M}_g(S,\beta),
\mathbb{Q})$$
of dimension $g$.

\subsection{BPS for $K3$ surfaces}\label{bpsk3}
Let $(S,\omega_K)$ be a $K3$ surface with a K\"ahler form $\omega_K$.
Let $\beta \in \text{Pic}(S)$ be a nonzero class of positive
degree 
$$\int_\beta \omega_K >0.$$
 We are interested
in the following reduced Gromov-Witten integrals,
\begin{equation}\label{d432}
R_{g,\beta}=\int_{[\overline{M}_g(S,\beta)]^{red}}
(-1)^g \lambda_g.
\end{equation}
Here, the integrand $\lambda_g$ is the top Chern
class of the Hodge bundle 
$$\mathbb{E}_g \rarr \overline{M}_g(S,\beta)$$
with fiber $H^0(C,\omega_C)$ over moduli point
$$[f:C\rarr S]\in \overline{M}_g(S,\beta).$$
See \cite{FP,GP} for a discussion of Hodge classes in
Gromov-Witten theory.

The definition of the BPS counts associated to the Hodge integrals
\eqref{d432} is straightforward. Let 
$\alpha\in \text{Pic}(S)$ be a primitive class of
positive degree with respect to $\omega_K$.
The Gromov-Witten potential $F_{{\alpha}}(\lambda,v)$ 
for classes proportional
to ${\alpha}$
is 
$${F}_{{\alpha}}=
\sum_{g\geq 0}\   \sum_{m>0}\   R_{g,m\alpha} \ \lambda^{2g-2} 
v^{m{\alpha}}.$$
The
BPS counts $r_{g,m\alpha}$ are uniquely defined 
by the following equation:
\begin{equation*}
F_\alpha =   \ \ \ \sum_{g\geq 0}  \ \sum_{m>0} \
 r_{g,m\alpha} \ \lambda^{2g-2} \sum_{d>0}
\frac{1}{d}\left( \frac{\sin
({d\lambda/2})}{\lambda/2}\right)^{2g-2} v^{dm\alpha}. 
\end{equation*}
We have defined BPS counts  
for both primitive and
divisible classes.

The string theoretic calculations of Katz, Klemm and Vafa \cite{kkv}
via heterotic duality yield two conjectures.

\begin{Conjecture} \label{xxx1}
The BPS count $r_{g,\beta}$ depends upon $\beta$ only through the square $\int_S \beta^2$.
\end{Conjecture}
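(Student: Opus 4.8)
The plan is to separate the statement into a deformation-invariance part and a divisibility-independence part. Write $r_{g,m,h}$ for the count $r_{g,\beta}$ attached to a class $\beta$ of divisibility $m$ with $\int_S\beta^2=2h-2$; the assertion is that $r_{g,m,h}$ is independent both of the residual choice of $\beta$ (once $m$ and $h$ are fixed) and of $m$ itself. The first part is comparatively soft; the second, which identifies the counts for imprimitive classes with those for primitive classes of the same square, is the real content and the reason the statement is subtle.

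For the first part I would argue by deformation invariance of the reduced theory. By the lattice fact recalled above, any two pairs $(S,\beta)$, $(S',\beta')$ with equal square and equal divisibility are related by an orthogonal transformation of the $K3$ lattice, so they carry the same abstract data. Transporting $\beta'$ to $\beta$ by this transformation, both Hodge structures then lie over the connected locus $D^V_\beta=M^V\cap\beta^\perp$ in the period domain, and surjectivity of the period map lets me join them by a family of $K3$ surfaces on which $\beta$ stays of type $(1,1)$. Because the reduced obstruction theory of Section \ref{redth} is built uniformly over such a family, the reduced Hodge integrals $R_{g,\beta}$ of \eqref{d432} are locally constant, hence constant, and the BPS counts agree. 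The one point requiring care is that the \emph{ordinary} $K3$ invariants vanish identically, so the invariance must be asserted for the reduced class $[\overline{M}_g(S,\beta)]^{red}$, valid precisely on the sublocus where $\beta$ remains of type $(1,1)$.

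For the second, harder part I would invoke Theorem \ref{oooo}. Specializing to the STU family $\pi$, the $3$-fold BPS counts $n^{STU}_{g,d}$ are accessible through proven mirror transformations while the refined Noether-Lefschetz numbers $NL^{STU}_{m,h,d}$ are computed from the modularity results of Borcherds \cite{borch}. Theorem \ref{oooo} then reads as the linear system
$$n^{STU}_{g,d}=\sum_{h}\sum_{m\geq 1} r_{g,m,h}\, NL^{STU}_{m,h,d},$$
and the strategy is to invert it for the unknown counts $r_{g,m,h}$. In genus $0$ the left side is the Yau-Zaslow series, the coefficients $NL^{STU}_{m,h,d}$ are explicit, and the inversion carried out in \cite{KMPS} forces $r_{0,m,h}=r_{0,1,h}$; genus $1$ then follows from its stated proportionality to the genus-$0$ count.

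The main obstacle is exactly this inversion in higher genus. Solving the system for all $m$ simultaneously demands the entire tower $\{n^{STU}_{g,d}\}$ in each fixed genus, and mirror symmetry supplies these rigorously only for small $g$. Thus the deformation-invariance part holds in every genus, but the divisibility-independence part is presently controlled only for $g\leq 1$; extending it to all $g$ is what keeps the statement a conjecture.
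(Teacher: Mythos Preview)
The statement is labeled a \emph{Conjecture} in the paper and the paper offers no proof of it in general; it only records the partial progress you describe. So there is no ``paper's own proof'' to compare against.

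That said, your proposal tracks the paper's discussion closely and accurately. Your first part---that $r_{g,\beta}$ depends only on the divisibility $m$ and the square $2h-2$---is exactly what the paper asserts unconditionally (``By deformation arguments, the invariants $R_{g,\beta}$ depend upon both the divisibility $m$ of $\beta$ and $\int_S\beta^2$''), and your sketch via lattice transitivity and deformation of the reduced class over $D^V_\beta$ is the standard justification. Your second part correctly identifies the mechanism of \cite{KMPS}: feed the STU family into Theorem~\ref{oooo}, compute $n^{STU}_{0,d}$ by mirror symmetry and $NL^{STU}_{m,h,d}$ by Borcherds, and invert to force $r_{0,m,h}=r_{0,1,h}$; then genus~$1$ follows from the relation $R_{1,\beta}=-\tfrac{\langle\beta,\beta\rangle}{24}R_{0,\beta}$, exactly as the paper states. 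You also correctly diagnose why the argument stalls for $g\geq 2$.

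One small correction: in genus~$0$ the left side $n^{STU}_{0,d}$ is computed by mirror symmetry for the STU threefold, not by the Yau-Zaslow series; the Yau-Zaslow formula is the \emph{output} for $r_{0,1,h}$ after inversion, not the input. Otherwise your account matches the paper's, and since neither you nor the paper claims a complete proof, there is nothing further to adjudicate.
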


Assuming the validity of
Conjecture \ref{xxx1}, let $r_{g,h}$ denote the BPS count associated
to a class $\beta$ satisfying
$$\int_S \beta^2 = 2h-2.$$
Conjecture \ref{xxx1} is rather surprising from the point
of view of Gromov-Witten theory. By deformation
arguments, the invariants $R_{g,\beta}$
depend upon both the divisibility $m$ of $\beta$ and $\int_S \beta^2$.
Hence, BPS counts $r_{g,m,h}$ depending upon both the divisibility and
the norm are well-defined unconditionally.

\begin{Conjecture} \label{xxx2}
The BPS counts $r_{g,h}$ are uniquely determined by the
following equation:
$$\sum_{g\geq 0} \sum_{h\geq 0} (-1)^g r_{g,h}(y^{\frac{1}{2}} - y^{-\frac{1}{2}})^{2g}q^h =
\prod_{n\geq 1} \frac{1}{(1-q^n)^{20} (1-yq^n)^2 (1-y^{-1}q^n)^2}.$$
\end{Conjecture}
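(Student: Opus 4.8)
\textbf{Proof proposal for Conjecture \ref{xxx2}.}

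The plan is to reduce the conjecture to the known evaluation of the full reduced Gromov--Witten partition function of a $K3$ surface, following the philosophy that the $(-1)^g\lambda_g$-twisted reduced invariants are governed by the Kawai--Yoshioka/Katz--Klemm--Vafa formula. Concretely, the Hodge integrals $R_{g,\beta}$ of \eqref{d432} should be repackaged into a generating series in $q$ (recording $\int_S\beta^2 = 2h-2$) and an auxiliary variable $y=e^{i\lambda}$ (or rather $y^{1/2}-y^{-1/2}$, which converts the $\sin(d\lambda/2)/(\lambda/2)$ kernel into the right shape), so that the defining equation for $r_{g,h}$ becomes precisely the Gopakumar--Vafa rewriting of the left-hand side. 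The first step is therefore purely formal: show that the substitution converting the genus variable $\lambda$ into $y$ turns the GV-style sum $\sum_{d>0}\frac{1}{d}(\frac{\sin(d\lambda/2)}{\lambda/2})^{2g-2}v^{dm\alpha}$ into the standard product so that extracting $r_{g,h}$ from $F_\alpha$ is equivalent to extracting it from a product formula in $q$ and $(y^{1/2}-y^{-1/2})^2$.

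Next I would invoke Conjecture \ref{xxx1} (which is being assumed here, since the statement of Conjecture \ref{xxx2} is phrased in terms of $r_{g,h}$) to pass from the divisibility-dependent invariants to the $m$-independent $r_{g,h}$, so that it suffices to compute the primitive series $F_\alpha$ for a single primitive $\alpha$ on a single $K3$ surface, e.g. an elliptically fibered $K3$ with a section, where $\alpha$ can be taken to be the fiber class or fiber-plus-section. On such a surface the reduced invariants in primitive classes are computable: one uses the deformation invariance of the reduced theory (the reduced virtual class deforms in families of $K3$ surfaces with $\beta$ staying of type $(1,1)$) to reduce to a convenient geometry, and then the Yau--Zaslow-type evaluation together with its higher-genus refinement gives the generating function. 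The key input is that the $\lambda_g$-twisted reduced descendent-free invariants assemble into $\prod_{n\geq1}(1-q^n)^{-20}(1-yq^n)^{-2}(1-y^{-1}q^n)^{-2}$; this is exactly the Kawai--Yoshioka formula for the Euler characteristics of Hilbert schemes / moduli of sheaves on $K3$, and it is the content one must quote or re-derive.

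The main obstacle is the higher-genus evaluation itself: establishing that the $(-1)^g\lambda_g$ reduced Hodge integrals on $\overline{M}_g(S,\beta)$ for primitive $\beta$ reproduce the above product. For $g=0$ this is the Yau--Zaslow formula (genus $0$, $\lambda_0=1$), proven for primitive classes; for $g=1$ it follows from the genus-$0$ result plus the $\lambda_1$-relation as indicated in the introduction; for general $g$ one needs either the degeneration/localization computation on an elliptic $K3$ (reducing to Hodge integrals over $\overline{M}_{g,n}$ and resumming via the known $\lambda_g$-formula and the Bryan--Leung style count of sections) or a direct appeal to the sheaf-theoretic side via wall-crossing. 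I would carry out the elliptic $K3$ degeneration: specialize $\pi_{K3}$ to an elliptic fibration with $24$ nodal fibers, split off the nodal fibers one at a time, and express the primitive invariants as a sum over distributions of genus and marked points among the nodal fibers, each local contribution being a standard $\lambda_g$ Hodge integral; the combinatorial resummation then produces the triple product, with the exponents $20$, $2$, $2$ emerging from the $20$-dimensional transverse part and the two $H^0/H^2$ factors of the section geometry. The delicate points will be controlling the reduced obstruction theory under the degeneration (the reduced class does not satisfy the naive degeneration formula, so one must work with the family over the base of the degeneration and track the extra trivial factor carefully) and matching signs and normalizations so that $(-1)^g$ and $(y^{1/2}-y^{-1/2})^{2g}$ appear exactly as stated.
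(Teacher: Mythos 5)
The statement you are trying to prove is not proven in the paper at all: it is stated as a conjecture (originating in the string-theoretic work of Katz--Klemm--Vafa), and the authors explicitly record that it is known only in special cases --- genus $0$ (the Yau--Zaslow formula, primitive case by Bryan--Leung, divisibility $2$ by Lee--Leung, all divisibilities via Theorem 1 in \cite{KMPS}), genus $1$ as a formal consequence of genus $0$ via the $\lambda_1$ relation, and primitive classes in all genera by the stable-pairs argument of \cite{MPT}. So there is no ``paper's own proof'' to match your proposal against, and any honest assessment must be of whether your sketch closes the gap that the authors themselves leave open.

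It does not. Your argument has two load-bearing steps that are each essentially equivalent to the open problem. First, the reduction to primitive classes invokes Conjecture \ref{xxx1}, which is itself unproven in the generality you need (at the time of the paper it is known only for $g\le 1$); since the statement of Conjecture \ref{xxx2} is phrased in terms of $r_{g,h}$ this is a legitimate standing hypothesis, but you should say explicitly that you are only attempting the primitive case. Second, and more seriously, the evaluation of the primitive series is exactly where you write ``this is exactly the Kawai--Yoshioka formula \dots and it is the content one must quote or re-derive'' --- that sentence is the entire theorem. The elliptic-$K3$ degeneration you propose as the re-derivation does not go through as described: the reduced virtual class does not satisfy the degeneration formula (the reduced obstruction theory uses the global holomorphic symplectic form, which does not survive a semistable degeneration of the surface), and you acknowledge this without offering a fix. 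The Bryan--Leung localization on an elliptic $K3$ computes invariants with point insertions for a particular twisted family and does not directly produce the $(-1)^g\lambda_g$ integrals; the actual proof for primitive classes in \cite{MPT} avoids this by passing to stable pairs via the GW/pairs correspondence and using Kawai--Yoshioka's computation of the stable-pairs invariants, which is a genuinely different mechanism from a Hodge-integral resummation. The claim that the exponents $20$, $2$, $2$ ``emerge from the $20$-dimensional transverse part and the two $H^0/H^2$ factors'' is not an argument. As written, the proposal is a reasonable research plan whose central step is unproved.
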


As a consequence of Conjecture 2, $r_{g,h}$ vanishes if $g>h$ and
$$r_{g,g}=(-1)^g (g+1).$$
The first values are tabulated below:

\vspace{18pt}

\begin{tabular}{|c||ccccc|}
        \hline
\textbf{}
$r_{g,h}$&    $h= 0$ & 1  & 2 & 3 & 4 \\
        \hline \hline
$g=0$ & $1$ & $24$ & $324$ & 
$3200$ &$25650$  \\
1      &  & $-2$ & 
$-54$ & $-800$  & $-8550$      \\
2      & & & $3$ & 
$88$ & $1401$       \\
3      & &  & 
 & $-4$  & $-126$       \\
4      &  &  & 
 &   & 5       \\
       \hline
\end{tabular}

\vspace{18pt}

The right side Conjecture \ref{xxx2} is related to the 
generating series of Hodge numbers of the Hilbert schemes of points
$\text{Hilb}(S,n)$.
The genus 0 specialization of Conjecture \ref{xxx2} recovers the
Yau-Zaslow formula
$$\sum_{h\geq 0} r_{0,h} q^h = \prod_{n\geq 1} \frac{1}{(1-q^n)^{24}}$$
related to the Euler characteristics of $\text{Hilb}(S,n)$.

The Conjectures are proven in very few cases.
A mathematical approach to the genus 0 Yau-Zaslow formula 
following \cite{yauz}
can be found in \cite{beu,xc,fgd}.
The Yau-Zaslow formula is proven for
primitive classes $\beta$ by Bryan and Leung \cite{brl}. 
If $\beta$ has divisibility 2, 
the Yau-Zaslow formula is proven 
by Lee and Leung 
 in \cite{ll}. Using Theorem 1, a complete proof of the
Yau-Zaslow formula for all divisibilities is given in 
\cite{KMPS}. Since
$$R_{1,\beta} = \int_{[\overline{M}_1(S,\beta)]^{red}} -\lambda_1
 =-\frac{\lan \beta, \beta \ran}{24} R_{0,\beta},$$
we obtain 
$$r_{1,h} = -\frac{h}{12}\  r_{0,h}$$
and Conjectures 1 and 2 for genus 1
from the genus 0 results.

Conjecture 2 for primitive classes $\beta$ is connected
to Euler characteristics of the moduli spaces of
stable pairs on $K3$ by the correspondence of \cite{PT1,PT2}.
A proof of Conjecture 2 for primitive classes is given in \cite{MPT}.


\section{Theorem 1} \label{tmm}

\subsection{Result} 
Consider  a quasi-polarized family of $K3$ surfaces of degree $l$
as in Section \ref{quasp},
$$\pi:X \rarr C\ .$$
 We restate Theorem 1 in terms
of $\gamma \in H_2(X,\mathbb{Z})^\pi$ following the
notation of Section \ref{lsy}.

\setcounter{Theorem}{0}
\begin{Theorem}
 For $\gamma\neq 0$,
$$n_{g,\gamma}^X= \sum_{h} \sum_{m=1}^{\infty}
r_{g,m,h}\cdot  NL_{m,h,\gamma}^\pi.$$
\end{Theorem}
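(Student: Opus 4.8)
The plan is to compare, fiber by fiber, the reduced Gromov–Witten theory of the $K3$ surfaces $X_\xi$ with the honest Gromov–Witten theory of the threefold $X$ in $\pi$-vertical classes. The starting point is that a vertical stable map $f\colon C\to X$ of class $\gamma$ necessarily factors through a single fiber $X_{\xi}$, since $C$ is connected and $\pi\circ f$ is constant. Thus $\overline{M}_g(X,\gamma)$ is set-theoretically fibered over $C$, with fiber over $\xi$ the space $\overline{M}_g(X_\xi,\beta)$ of maps to the $K3$ surface $X_\xi$ in a class $\beta\in \mathrm{Pic}(X_\xi)$ with $\epsilon(\beta)=\gamma$; the moduli point $\xi$ lies on one of the Noether–Lefschetz divisors $\mathcal{D}^{\mathcal V}_{h,\gamma}$ exactly when such a $\beta$ with $\langle\beta,\beta\rangle=2h-2$ exists. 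First I would set up the relative moduli space $\overline{M}_g(X/C, \gamma)\to C$ and analyze its obstruction theory: the vertical deformations of $f$ see $f^*T_{X_\xi}$, while the full deformation theory of $f$ into $X$ has an extra piece coming from the normal direction to the fiber, i.e.\ from $\pi^*T_{C,\xi}$, which — because $T_{X/C}$ carries a relative holomorphic symplectic form — is precisely the degree $-1$ correction $\tau_{\leq -1}R^\bullet\nu_*(\omega_\nu)^\vee$ appearing in the reduced obstruction theory of Section \ref{redth}. This is the conceptual heart: the obstruction-theory difference between mapping into the threefold and mapping into the $K3$ fiber is exactly the difference between the standard and the reduced theory, twisted by the cotangent line of $C$ at $\xi$.

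The next step is to make this into an equality of virtual classes. Over the locus of $\xi$ where the relevant $\beta$ is of Hodge type $(1,1)$ — a finite set of points counted with the excess multiplicities built into $NL^\pi_{m,h,\gamma}$ by Propositions 1 and the definition \eqref{def22} — I would argue that $[\overline{M}_g(X,\gamma)]^{vir}$ decomposes as a sum over these $\xi$ and over the contributing classes $\beta$, with the contribution at each $\xi$ being $[\overline{M}_g(X_\xi,\beta)]^{red}$ capped with the Euler class of an obstruction bundle. That obstruction bundle is the one whose fiber at $[f\colon C\to X_\xi]$ is $H^0(C,\omega_C)\otimes T_{C,\xi}$ — equivalently, after trivializing $T_{C,\xi}$, the Hodge bundle $\mathbb{E}_g$ — so that the contribution evaluates to $\int_{[\overline{M}_g(X_\xi,\beta)]^{red}}(-1)^g\lambda_g = R_{g,\beta}$, which by definition \eqref{d432} depends only on the divisibility $m$ of $\beta$ and on $h$ with $2h-2=\langle\beta,\beta\rangle$. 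Summing, $N^X_{g,\gamma} = \sum_{h}\sum_{m\geq 1} R_{g,m,h}\cdot NL^\pi_{m,h,\gamma}$, where the Noether–Lefschetz number counts exactly how many $(\xi,\beta)$ pairs of each type occur, with the correct excess contributions when $\sigma(C)$ lies inside a divisor.

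Finally, I would pass from the Gromov–Witten side to the BPS side. The identity $N^X_{g,\gamma}=\sum_{h,m}R_{g,m,h}NL^\pi_{m,h,\gamma}$ holds for every $g$ and every $\gamma$; applying the Gopakumar–Vafa transform that defines $n^X_{g,\gamma}$ from the $N^X_{g,\gamma}$, and the matching transform that defines $r_{g,m\alpha}$ from the $R_{g,m\alpha}$, and using that the multiple-cover/degree-$d$ structure is identical on both sides (the factor $\frac1d(\sin(d\lambda/2)/(\lambda/2))^{2g-2}$ and the replacement $\gamma\mapsto d\gamma$, $\beta\mapsto d\beta$ are compatible with $\epsilon$ and with $NL^\pi_{m,h,\gamma}$ scaling correctly under $\gamma\mapsto d\gamma$), one obtains the stated formula $n^X_{g,\gamma}=\sum_h\sum_{m\geq1}r_{g,m,h}NL^\pi_{m,h,\gamma}$ term by term. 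The main obstacle I expect is the virtual-class comparison in the second step: one must show rigorously that the virtual class of $\overline{M}_g(X,\gamma)$ localizes onto the Noether–Lefschetz points with no hidden contributions from the boundary or from non-reduced fiber directions, and that the excess intersection appearing in the definition of $NL^\pi_{m,h,\gamma}$ (when $\sigma(C)$ is contained in a divisor, as in the discussion following \eqref{def22}) matches exactly the excess obstruction bundle on the Gromov–Witten side — this requires a careful deformation-to-the-normal-cone or cosection-type argument relating the two obstruction theories over a possibly non-reduced base, and is where the holomorphic symplectic structure on $T_{X/C}$ must be used most delicately.
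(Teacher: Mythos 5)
Your overall strategy coincides with the paper's: reduce to the Gromov--Witten statement (the Gopakumar--Vafa transforms on the two sides are identical, so the BPS reduction at the end is unproblematic), fiber the moduli space over $C$, and match the obstruction-theory discrepancy between maps to $X$ and maps to the fiber $X_\xi$ against the discrepancy between the standard and reduced theories of the $K3$ surface. However, the step you flag as ``the main obstacle'' is precisely where all of the paper's work lies, and as written your proposal has a genuine gap there, in two respects.

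First, the identification of the normal direction to the fiber with the reduced-theory correction is not a formal consequence of the relative holomorphic symplectic structure, as your second paragraph suggests. The symplectic form supplies the map $\iota$ from the trivial factor $H^0(X_\xi,K_{X_\xi})$ placed in degree $-1$ into $R^\bullet\nu_*(f^*T_{X_\xi})^\vee$, and the fibration supplies $\widetilde{\iota}$ to the trivial factor $T_{C,\xi}^*$ in degree $-1$; what must actually be proved is that the composition $\widetilde{\iota}\circ\iota: H^0(X_\xi,K_{X_\xi})\to T_{C,\xi}^*$ is an isomorphism. The paper proves this by computing the differential of the period map cupped with $\beta$, and the computation works \emph{only} under a local intersection multiplicity $1$ hypothesis: if $\sigma(C)$ is tangent to $\beta^\perp$, the composition vanishes and the two obstruction theories do not differ by $\mathbb{E}_g\otimes T_{C,\xi}^*$. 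The paper arranges multiplicity $1$ by a local holomorphic perturbation of the section, justified by deformation invariance of both sides of the identity. Second, the excess case cannot be folded into the same argument: the paper splits the contributing classes $B(m,h,\gamma)$ into isolated ones and ones forming a finite local system $B_\infty(m,h,\gamma)$ over $C$, and for the latter carries out two separate computations --- the Gromov--Witten contribution via $[\overline{M}_g(\pi,\epsilon)]^{vir}=c_1(K^*)\cap[\overline{M}_g(\pi,\epsilon)]^{red}$, giving $R_{g,m,h}\cdot\int_{B_\infty}c_1(K^*)$, and the Noether--Lefschetz excess contribution via the normal bundle with fiber $\mathrm{Hom}(H^0(X_\xi,K_{X_\xi}),\com\cdot\beta)$, giving the same integral $\int_{B_\infty}c_1(K^*)$. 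Your proposal correctly anticipates that these must match, but supplies neither the isolated/local-system dichotomy, nor the perturbation to transversality, nor the two excess computations; these are the substance of the proof rather than a technical afterthought.
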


\subsection{Proof}
Since the formulas relating the BPS counts to Gromov-Witten invariants
are the same for $X$ and the $K3$ surface,  Theorem 1 is 
equivalent to the analogous 
Gromov-Witten statement:
\begin{equation}\label{cq29}
N_{g,\gamma}^X= \sum_{h} \sum_{m=1}^{\infty}
R_{g,m,h}\cdot  NL_{m,h,\gamma}^\pi
\end{equation}
for $\gamma\neq 0$.

Following the notation of Section \ref{nnn}, let $\sigma$ denote
the section
$$\sigma: C \rarr \mathcal{M}^{\mathcal{V}}$$
determined by the Hodge structure of the $K3$ fibers
$$\sigma(\xi)= [H^0(X,K_{X_\xi})]\in \mathcal{M}^{\mathcal{V}_\xi}.$$
For each $\xi\in C$, let
$$\mathcal{V}_\xi(m,h,\gamma) \subset \mathcal{V}_\xi $$
be the set of classes with divisibility $m$, square $2h-2$,
and push-forward $\gamma$. Let
$$B_\xi(m,h,\gamma)= \{\ \beta \in \mathcal{V}_\xi(m,h,\gamma)
 \ | \ \sigma(\xi)
\in \beta^\perp\ \}.$$
By Proposition 1, the set $B_\xi(m,h,\gamma)$ is
finite.

Equation \eqref{cq29} is proven by showing the contributions of the 
classes $B_\xi(m,h,\gamma)$ to both sides are the same.
The set
$$B(m,h,\gamma) = \bigcup B_\xi(m,h,\gamma) \subset \mathcal{V}$$
can be divided into two disjoint subsets
$$B(m,h,\gamma) = B_{\text{iso}}(m,h,\gamma) \cup B_{\infty}(m,h,\gamma).$$
The elements of $B_{\text{iso}}(m,h,\gamma)$ are isolated while 
the elements of $B_{\infty}(m,h,\gamma)$ form a finite local system over
$C$,
\begin{equation}\label{nq5}
\epsilon:B_{\infty}(m,h,\gamma)\rarr C.
\end{equation}
We address the contributions of the isolated issues and the
local system separately.

Consider first the local system \eqref{nq5}. The contribution of
$\epsilon$ to the Gromov-Witten invariant $N_{g,\gamma}^X$ is
the integral
$$N_{g,\epsilon}^X=\int_{[\overline{M}_{g}(X,\epsilon)]^{vir}} 1$$
where $\overline{M}_g(X,\epsilon)\subset \overline{M}_g(X,\gamma)$ 
is the connected component{\footnote{By connected component,
we mean both open and closed. Formally, the condition
is usually stated as a union of connected components.}}
of the moduli space of stable maps which represent curve classes
in $\epsilon$. Alternatively, 
\begin{equation}\label{stan}
N_{g,\epsilon}^X = \int_{[\overline{M}_g(\pi,\epsilon)]^{vir}} 
c_g(\mathbb{E}^*_g \otimes T_C)
\end{equation}
where 
$\overline{M}_g(\pi,\epsilon)\subset \overline{M}_g(\pi,\gamma)$ 
is a 
connected component 
of the relative moduli space of maps.
By standard arguments \cite{FP}, the difference in the
absolute and relative obstruction theories is $\mathbb{E}^*_g \otimes T_C$
and hence yields the
Hodge integrand in \eqref{stan}.

The family $\pi$ determines a canonical line bundle
$$K \rarr C$$
with fiber $H^0(X_\xi, K_{X_\xi})$ over $\xi\in C$.
By the construction of the reduced class in Section \ref{redth},
$$[\overline{M}_g(\pi,\epsilon)]^{vir} = 
c_1(K^*) \cap [\overline{M}_g(\pi,\epsilon)]^{red}$$
where, on the right side, the reduced virtual
class for the relative moduli space of maps
appears.
Expanding \eqref{stan} yields
\begin{eqnarray*}
N_{g,\epsilon}^X & = & \int_{[\overline{M}_g(\pi,\epsilon)]^{red}} 
c_g(\mathbb{E}^*_g \otimes T_C)\cdot c_1(K^*) \\
& = &
 \int_{[\overline{M}_g(K3,m\alpha)]^{red}} 
(-1)^g\lambda_g \cdot \int_{B_\infty(m,h,\gamma)} c_1(K^*) \\
& = &  R_{g,m,h} \cdot \int_{B_\infty(m,h,\gamma)} c_1(K^*).
\end{eqnarray*}
In the second equality, $\alpha$ is primitive and
satisfies $$\langle m\alpha, m\alpha \rangle=2h-2.$$

The contribution of the local system $\epsilon$ to the
Noether-Lefschetz number $NL_{m,h,\gamma}^\pi$ is much
easier to calculate. The local system represents an
excess intersection contribution 
$$\int_{B_\infty(m,h,\gamma)} c_1(\text{Norm})$$
where $\text{Norm}$ is the line bundle with fiber
$$\text{Hom}(H^0(X_\xi, K_{X_\xi}), \com \cdot \beta)$$
at $\beta \in B_\infty(m,h,\gamma)$ lying over
$\xi\in C$. Over $B_\infty(m,h,\gamma)$, the fibration
$\com\cdot \beta$ is a trivial line bundle. 
Hence, the excess contribution of
$B_\infty(m,h,\gamma)$ to $NL^\pi_{m,h,\gamma}$ is 
$$\int_{B_\infty(m,h,\gamma)} c_1(K^*).$$
We conclude the contributions of $B_\infty(m,h,\gamma)$ to
the left and right sides of equation \eqref{cq29} exactly match.

We consider now the contributions of the isolated classes
$B_{\text iso}(m,h,\gamma)$ to the two sides of
\eqref{cq29}. Let
$$\beta \in B_{\text{ iso}}(m,h,\gamma)$$
be an isolated class lying over $\xi\in C$. We
trivialize  $\mathcal{M}^{\mathcal{V}}$ over a Euclidean
open set $U\subset C$ as in Section \ref{nnn}.
The local intersection of the section $\sigma$ with the divisor 
$$D_\beta^{V_\xi} \times U \subset M^{V_\xi} \times U$$
has an isolated point corresponding to
$(\beta,\xi)$. The local intersection multiplicity may not
be 1. However, by deformation equivalence of the
Gromov-Witten contributions on the left side of \eqref{cq29} and the
intersection products on the right side of \eqref{cq29},
we may assume {\em the local intersection multiplicity
is 1} after local holomorphic perturbation of the section $\sigma$.
Then, the contribution of the isolated class $\beta$ to
$NL^\pi_{m,h,\gamma}$ is certainly 1.

The final step is to show the contribution of the isolated
class $\beta$ with intersection multiplicity 1 to $N^X_{g,\gamma}$
is simply $R_{g,m,h}$. The result is obtained by a comparison
of obstruction theories. 

By the multiplicity $1$ hypothesis, a connected component of
the moduli space of stable maps to $X$  coincides with the
 moduli stable of stable maps to fiber $X_\xi$, 
\begin{equation}\label{ffeww}
\overline{M}_{g}(X_\xi,\beta) \subset
\overline{M}_g(X,\gamma).
\end{equation}
At the level of points, the assertion is obvious.
The multiplicity 1 conditions prohibits any infinitesimal
deformations of maps away from the fiber $X_\xi$ and
implies the scheme theoretic assertion.

From the fibration $\pi$, we obtain an exact sequence
\begin{equation}\label{long}
0\rarr T_{X_\xi} \rarr T_X|_{X_\xi} \rarr T_{C,\xi} \rarr 0,
\end{equation}
and an induced map
$$ \widetilde{\iota}:R^\bullet \nu_*(f^*T_{X_\xi})^\vee \rarr T_{C,\xi}^*$$
where the second complex is a trivial bundle in degree $-1$.
Following the notation of
Section \ref{redth}, we have a canonical map
$$\iota: H^0(X_\xi,K_{X_\xi})
  \rightarrow
R^\bullet\nu_*(f^*T_{X_\xi})^\vee$$
where the first complex is
a trivial bundle with fiber
$H^0(X_\xi,K_{X_\xi})$ in degree $-1$.
By Lemma \ref{fafa} below, the composition 
$$ \widetilde{\iota} \circ \iota: H^0(X_\xi,K_{X_\xi}) \rarr
T_{C,\xi}^*$$
is an isomorphism. Hence,
by  sequence \eqref{long},
the obstruction theories 
$R^\bullet\nu_*(f^*T_{X})^\vee$ and $C(\iota)$
differ by only by the Hodge bundle $\mathbb{E}_g \otimes T_{C,\xi}^*$.
We conclude
$$[\overline{M}_{g}(X_\xi,\beta)]^{vir_X} =
(-1)^g \lambda_g \cap [\overline{M}_{g}(X_\xi,\beta)]^{red}$$
where the virtual class on the left is obtained
from the obstruction theory of maps to $X$ via \eqref{ffeww}.
The contribution of the isolated class $\beta$ to
$N_{g,\gamma}^X$ is thus $R_{g,h,m}$.

Since the contributions of $B_{\text{iso}}(m,h,\gamma)$ to
the left and right sides of equation \eqref{cq29} also match,
the proof of Theorem 1 is complete. \qed

\begin{Lemma} \label{fafa} The composition
$$ \widetilde{\iota} \circ \iota: H^0(X_\xi,K_{X_\xi}) \rarr
T_{C,\xi}^*$$
is an isomorphism.
\end{Lemma}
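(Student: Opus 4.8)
The plan is to reduce the statement to the non-vanishing of a single number and then to identify that number with the first derivative at $\xi$ of the relevant Noether-Lefschetz function, which is forced to be nonzero by the multiplicity-$1$ hypothesis already in force. Since both $H^0(X_\xi,K_{X_\xi})$ and $T_{C,\xi}^*$ are $1$-dimensional, it suffices to prove $\widetilde{\iota}\circ\iota\neq 0$. Now $\iota$ and $\widetilde{\iota}$ are morphisms between complexes concentrated in cohomological degree $-1$: $\iota$ goes from the trivial bundle $H^0(X_\xi,K_{X_\xi})\otimes\oh$ to $R^\bullet\nu_*(f^*T_{X_\xi})^\vee$, and $\widetilde{\iota}$ from $R^\bullet\nu_*(f^*T_{X_\xi})^\vee$ to the trivial bundle $T_{C,\xi}^*\otimes\oh$. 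Hence $\widetilde{\iota}\circ\iota$ is a map of trivial line bundles on the relevant moduli space of maps, and its non-vanishing may be checked at a single point, say a stable map $f\colon\Sigma\rarr X_\xi$ of genus $g$ in class $\beta$.

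First I would unwind the two maps at $[f]$. By the construction in Section \ref{redth}, $\iota(s)$ is obtained by applying $R^\bullet\nu_*$ to $f^*T_{X_\xi}\xrightarrow{\lrcorner s} f^*\Omega_{X_\xi}\rarr\omega_\nu$, passing to $R^1$, and dualizing via the duality isomorphism $R^1\nu_*\omega_\nu\cong\oh$; thus at $[f]$ the functional $\iota(s)$ sends a class in $H^1(\Sigma,f^*T_{X_\xi})$ to its image in $H^1(\Sigma,\omega_\Sigma)\cong\com$ after contraction with $s$. By the construction in Section \ref{tmm}, $\widetilde{\iota}$ is the dual of the connecting map obtained by applying $R^\bullet\nu_*$ to the pullback of \eqref{long}; at $[f]$ it is therefore the transpose of the boundary homomorphism $T_{C,\xi}\rarr H^1(\Sigma,f^*T_{X_\xi})$, $\partial\mapsto f^*(\kappa_\xi(\partial))$, where $\kappa_\xi\colon T_{C,\xi}\rarr H^1(X_\xi,T_{X_\xi})$ is the Kodaira-Spencer map of $\pi$ at $\xi$ (cup product with the extension class of \eqref{long}). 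Composing, and using that $f^*\colon H^1(X_\xi,\Omega^1_{X_\xi})\rarr H^1(\Sigma,\omega_\Sigma)\cong\com$ is evaluation against the curve class $\beta=f_*[\Sigma]$, one obtains
$$\widetilde{\iota}\circ\iota(s)\colon\ \partial\longmapsto\ \langle\,\kappa_\xi(\partial)\lrcorner s,\ \beta\,\rangle,$$
where $\langle\,,\rangle$ is the cup product pairing on $H^2(X_\xi)$ and $\kappa_\xi(\partial)\lrcorner s\in H^{1,1}(X_\xi)$.

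It then remains to see this is nonzero for $\partial\neq 0$ and $s\neq 0$. By Griffiths' description of the differential of the period map, $\kappa_\xi(\partial)\lrcorner s$ is the $(1,1)$-part of the derivative at $\xi$ of any holomorphic family of generators $s_t\in H^0(X_t,K_{X_t})$; since $\int_{X_\xi}s_\xi\cup\beta=0$ (as $\beta$ is of type $(1,1)$ on $X_\xi$) and $\beta$ has flat Gauss-Manin continuation $\beta_t$,
$$\langle\,\kappa_\xi(\partial)\lrcorner s,\ \beta\,\rangle\ =\ \partial\Big|_\xi\int_{X_t}s_t\cup\beta_t.$$
The right side is the derivative at $\xi$ of the Noether-Lefschetz function whose zero locus is, locally near $\xi$, the component of $\mathcal{D}^{\mathcal{V}}_{h,\gamma}$ determined by $\beta$. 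The multiplicity-$1$ hypothesis on the intersection of $\sigma(C)$ with this component — in force here, after the local holomorphic perturbation of $\sigma$ made in the proof of Theorem 1 — says precisely that this function vanishes to order $1$ at $\xi$, hence has nonzero derivative. Therefore $\widetilde{\iota}\circ\iota\neq 0$, and so it is an isomorphism.

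The step I expect to be the main obstacle is the pointwise unwinding of $\iota$ and $\widetilde{\iota}$ above: a priori these are morphisms in a derived category, built from the truncation $\tau_{\leq -1}$, Ran's semiregularity construction, and the normal-bundle sequence \eqref{long}, and one must verify that their composition picks up no correction terms from the Hodge-bundle pieces in degrees $-2$ and $0$ and genuinely reduces to the naive "contract with $s$" followed by "cap with the Kodaira-Spencer class." Once this is set up, the nodal-curve bookkeeping ($\omega_\nu$ versus $\Omega_\nu$, and the relative duality $R^1\nu_*\omega_\nu\cong\oh$) should be routine.
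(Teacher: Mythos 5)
Your proposal is correct and follows essentially the same route as the paper: reduce to non-vanishing of a map of lines, identify $\widetilde{\iota}\circ\iota$ (up to base-change comparisons, which the paper dispatches via a commutative diagram and ``standard comparison results'') with the differential of the period map followed by cup product with $\beta$, and invoke the multiplicity-$1$ hypothesis to see that this composition is nonzero. The step you flag as the main obstacle --- that the derived-category composition reduces to ``contract with $s$, then cap with the Kodaira--Spencer class'' with no corrections --- is exactly the content of the paper's diagram, so your concern is well placed but resolved as you expect.
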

\bpf
Consider the differential of the period map at $\xi$,
$$T_{C,\xi} \rightarrow H^1(T_{X_{\xi}}) \rightarrow \mathrm{Hom}(H^0(K_{X_{\xi}}),H^{1}(\Omega_{X_{\xi}})).$$
The multiplicity $1$ condition implies that the image of this map is not contained
in the tangent space to the hyperplane $\beta^{\perp} = 0$.  More explicitly, if we apply
the cup-product pairing of $H^1(\Omega_{X_\xi})$ with the class 
$\beta \in H^{2}(X_{\xi},\mathbb{Z})$, the composition 
$$T_{C,\xi}\rarr H^{0}(K_{X_{\xi}})^{\ast}\otimes H^{1}(\Omega_{X_{\xi}})
\xrightarrow{\beta\cup} H^{0}(K_{X_{\xi}})^{\ast}\otimes \com$$  
is nonzero.
This sequence can be included in the diagram
$$\xymatrix{
T_{C_{\xi}}\ar[r]\ar@{=}[d] & H^{1}(T_{X_{\xi}})\ar[r]\ar[d] & H^{0}(K_{X_\xi})^{\ast}\otimes H^{1}(\Omega_{X_\xi})\ar[r]^-{\beta\cup}\ar[d] & H^{0}(K_{X_\xi})^{\ast}\ar@{=}[d]\\
T_{C_{\xi}}\ar[r] & R^{\bullet}\nu_{\ast}(f^{\ast}T_{X_\xi})\ar[r] & H^{0}(K_{X_\xi})^{\ast}\otimes R^{\bullet}\nu_{\ast}(f^{\ast}\Omega_{X_\xi})\ar[r]&H^{0}(K_{X_\xi})^{\ast}}
$$
where the vertical maps are given by base-change morphisms and the bottom row
is the map $(\widetilde{\iota}\circ \iota)^{\ast}$.
Standard comparison results imply that this diagram
commutes.  Since the top row is nonvanishing, so is the bottom row.
\epf

\subsection{Conjectures 1 and 2 revisited}
The proof of Conjectures 1 and 2 in 
the following case allows us to bound from below the
$h$ summation in Theorem 1.

\begin{Lemma}\label{aaas} If $\int_{K3} \beta^2 <0$, then 
$r_{g,\beta}= 1$
if 
$$g=0 \ \text{ and }\  \int_{K3} \beta^2 = -2$$
 and $r_{g,\beta}=0$ otherwise.
\end{Lemma}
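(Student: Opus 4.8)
The plan is to analyze the reduced Gromov-Witten theory of a $K3$ surface $S$ in a class $\beta$ of negative square and reduce to the well-understood case of a single rigid $(-2)$-curve. First I would observe that if $\langle\beta,\beta\rangle<0$, then the Hodge index theorem forces $\beta$ to be effective and (after adjusting by the quasi-polarization, using the deformation-invariance of reduced invariants) supported on a contractible A-D-E configuration of $(-2)$-curves; the only case producing a nonzero reduced invariant should be when $\beta$ is the class of a single smooth rational $(-2)$-curve, i.e. $\langle\beta,\beta\rangle=-2$, $g=0$. Equivalently, one can deform the pair $(S,\beta)$ so that the only curves in class $\beta$ are disjoint unions/chains of $(-2)$-curves, and a dimension/obstruction count kills every configuration except the single irreducible $(-2)$-curve in genus $0$.

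The key steps, in order: (1) reduce to $\beta$ primitive effective with $\langle\beta,\beta\rangle = -2$ or $\langle\beta,\beta\rangle\le -2$ even, using the Hodge index theorem and the birational contraction $S\to\widetilde S$ of Section~\ref{quasp}; (2) for such $\beta$, deform $S$ within its moduli so that $\operatorname{Pic}(S)$ is generated (up to the polarization) by the classes of an A-D-E system of $(-2)$-curves, so that any stable map in class $\beta$ has image a union of these rational curves; (3) compute $[\overline M_g(S,\beta)]^{red}$ in this situation: for the single $(-2)$-curve $E\cong\proj^1$ with normal bundle $\oh_E(-2)$ inside the surface, the reduced obstruction theory on $\overline M_0(S,[E])$ is a point with trivial obstruction, giving $R_{0,[E]}=1$ and hence $r_{0,\beta}=1$; (4) show all other cases vanish — for $g>0$ the genus mismatch with a rational curve forces $(-1)^g\lambda_g$ to integrate to zero, and for $\langle\beta,\beta\rangle\le -4$ the class is a non-reduced or reducible combination of $(-2)$-curves whose moduli, cut out by the reduced theory, contributes $0$ (either because no connected genus-$g$ curve maps with the right class, or by a $\lambda_g$-vanishing argument).

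The main obstacle I expect is step (4) in the reducible/multiple case: controlling the reduced virtual class of $\overline M_g(S,\beta)$ when $\beta$ is, say, twice a $(-2)$-curve or a sum of several $(-2)$-curves, where the moduli space can be positive-dimensional and the stable maps can have contracted components of positive genus. Here one wants a clean vanishing argument — presumably that the reduced virtual class, combined with the $(-1)^g\lambda_g$ insertion, localizes to contributions that cancel, or that the relevant moduli space carries a trivial factor forcing $\lambda_g$ to vanish. A convenient shortcut is to invoke deformation invariance to move to an elliptic $K3$ (or a $K3$ with an explicit A-D-E fiber configuration) where these contributions can be matched against the known multiple-cover formula, though one must be careful that only the \emph{negative}-square classes are at issue and these are rigid. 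The genus $0$, square $-2$ computation itself is standard and should present no difficulty.
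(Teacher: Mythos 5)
There are genuine gaps, and one incorrect premise. Your step (1) is false as stated: the Hodge index theorem does not force a class of negative square to be effective, much less supported on an A-D-E configuration of $(-2)$-curves. Riemann--Roch gives effectivity of $\beta$ or $-\beta$ only when $\langle\beta,\beta\rangle\geq -2$; for $\langle\beta,\beta\rangle<-2$ neither sign need be effective, and the paper's proof exploits exactly this. By deformation invariance (the reduced invariant depends only on the divisibility and the square), the paper passes to an elliptically fibered $K3$ with section and takes $\beta=\pm([s]+h[f])$, which is primitive of square $2h-2$ and \emph{not} effective for $h<0$; hence $\overline{M}_g(S,m\beta)=\emptyset$ and all invariants vanish. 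Your proposed alternative --- analyzing reduced virtual classes of moduli of maps onto reducible or non-reduced combinations of $(-2)$-curves --- is precisely the hard computation that the right choice of model lets you avoid, and you correctly flag it as the obstacle without resolving it.

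For the square $-2$ case your direct computation only settles $(g,m)=(0,1)$: indeed $H^0(\oh_{\proj^1}(-2))=0$ and the reduced theory cancels the one-dimensional obstruction $H^1(\oh_{\proj^1}(-2))$, so $R_{0,[E]}=1$. But the lemma also requires $r_{g,m,\beta}=0$ for all $g>0$ and all $m>1$, where the moduli spaces $\overline{M}_g(S,m[E])$ are nonempty and positive-dimensional (contracted components, multiple covers); "genus mismatch forces $\lambda_g$ to integrate to zero" is not an argument here, since the BPS counts $r_{g,m,h}$ are extracted from the full tower $R_{g,m\alpha}$ by the Gopakumar--Vafa resummation. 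The paper handles all $(g,m)$ simultaneously by embedding $S$ as a fiber of a family $X\to\bigtriangleup_0$ in which $\beta$ fails even infinitesimally to remain algebraic, so that by the obstruction-theory comparison in the proof of Theorem 1 (with local Noether--Lefschetz multiplicity $1$) the numbers $r_{g,m,\beta}$ coincide with the BPS contributions of the rigid $(-1,-1)$ curve $P\subset X$, which are known from \cite{FP} to consist of a single genus $0$, degree $1$ state. Without some such input (or an equivalent multiple-cover and contracted-component analysis), your proof is incomplete.
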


\begin{proof}
Let $S$ be a $K3$ surface, and let 
$\beta\in \text{Pic}(S)$ be primitive with 
$$\int_S \beta^2 =-2.$$ We may
assume $\beta$ is represented by an isolated $-2$ curve $P\subset S$.
Let 
$$\pi: X \rarr \bigtriangleup_0$$
be a 1-parameter deformation of $S$ over the disk $\bigtriangleup_0$
for which $\beta$ 
fails (even infinitesimally) to remain algebraic. By the
proof of Theorem 1, the reduced
invariants $r_{g,m,\beta}$ are obtained{\footnote{The local NL intersection
number here is 1.}} from the
contribution of $P$ to the BPS state counts of $X$.
Since $P$ is a rigid $(-1,-1)$ curve, $P$
contributes a single BPS state \cite{FP}. We conclude
$$r_{g,m,\beta}=1$$
if $(g,m)=(0,1)$ and $r_{g,m,\beta}=0$ otherwise.

If $\beta\in \text{Pic}(S)$ 
is primitive with square $2h-2$ strictly less than $-2$, then
all reduced invariants $r_{g,m,\beta}$ vanish. The proof is obtained by
considering elliptically fibered $K3$ surfaces
$S \rarr \proj^1$.
Let 
$$[s],[f]\in \text{Pic}(S)$$ be the classes of a section and
a fiber respectively.
Then,
$$[s]+h[f], \ -[s]-h[f] \in \text{Pic}(S)$$
are both primitive with square $2h-2$.
Since the moduli spaces
$$\overline{M}_{g}\left(S,m([s]+h[f])\right), \
 \overline{M}_{g}\left(S,m(-[s]-h[f])\right)$$
are easily seen to be empty, all reduced invariants $r_{g,m,\beta}$
vanish.
\end{proof}

By Lemma \ref{aaas}, the integrals $r_{g,m,h<0}$ all vanish. Hence,
Theorem 1 may be written as
$$n_{g,\gamma}^X= \sum_{h\geq 0} \sum_{m=1}^{\infty}
r_{g,m,h}\cdot  NL_{m,h,\gamma}^\pi.$$
If Conjecture 1 and the vanishing $r_{g,h}$ for $g>h$ of Conjecture
2 hold, then
$$r_{g,h}=r_{g,m,h}$$
and
Theorem 1 implies  the following result.
by relation \eqref{zz23}.

\vspace{10pt}
\noindent{\bf Theorem $\mathbf{1^*.}$}
{\em
For $\gamma\neq 0$,
$$n_{g,\gamma}^X= \sum_{h\geq g} 
r_{g,h}\cdot  NL_{h,\gamma}^\pi\ .$$
}
\vspace{10pt}

The asterisk here indicates the  dependence of Theorem $1^*$
 upon Conjectures
1 and 2.

\subsection{Invertibility}
Theorem $1^*$ and Conjecture 2
imply the BPS states $n_{g,\gamma}^X$ of the total space contain
exactly the same information as the Noether-Lefschetz numbers
$NL^\pi_{h,\gamma}$.

\vspace{10pt}
\noindent{\bf Proposition $\mathbf{4^*.}$}
{\em
For $\gamma\in H_2(X,\mathbb{Z})^\pi$
of positive degree,
the invariants $\{n_{g,\gamma}(\pi)\}_{g\geq 0}$ determine
the Noether-Lefschetz numbers $\{NL_{h,\gamma}(\pi)\}_{h\geq 0}$
in terms of the invariants $\{r_{g,h}\}_{g,h\geq 0}$.}
\vspace{10pt}

\bpf 
Fix $\gamma \in H_2(X,\mathbb{Z})^\pi$.
By Proposition 2, the numbers $NL_{h,\gamma}(\pi)$
vanish for $h>h_{top}$. So we need only determine
$$NL_{0,\gamma} , \ldots NL_{h_{top},\gamma}.$$
The equations
$$n_{g,\gamma}(\pi)= \sum_{h=g}^{h_{top}} 
r_{g,h}\cdot  NL_{h,\gamma}(\pi)$$
for $g=0, \ldots, h_{top}$ of Theorem $1^*$
are triangular and invertible by Conjecture 2.
\epf

\section{Modular forms}\label{modularforms}
\subsection{Overview}

We explain here Borcherds' work \cite{borch} relating Noether-Lefschetz 
numbers to Fourier coefficients of modular 
forms.{\footnote{Borcherds' original result is modular only up to a
$\text{Gal}(\overline{\mathbb{Q}}/\mathbb{Q})$-action. The
strengthening of \cite{borch} by the
more recent rationality result of
\cite{mcgraw} removes the $\text{Gal}(\overline{\mathbb{Q}}/\mathbb{Q})$
issue.}}
His results apply in great generality 
to arithmetic quotients of symmetric spaces associated to the orthogonal group 
$O(2,n)$ for any $n$.  While we are mainly interested 
in the case of $O(2,19)$, we will first explain 
the statement in full generality.  Other values of $n$ play a role, for
example, 
in studying 1-parameter families of $K3$ surfaces with 
generic Picard rank at least $2$.

\subsection{Vector-valued modular forms of half-integral weight}

We first summarize standard facts and notation regarding modular forms of 
half-integral weight.  In order to make sense of the modular transformation 
law with half-integer exponents, 
a double cover of the standard modular group $SL_{2}(\Z)$ is required. 

The metaplectic group $Mp_{2}(\R)$ is the 
unique connected double cover of $SL_{2}(\R)$.  The elements of 
$Mp_{2}(\R)$
 can be written in the form
$$\left(\left(\begin{array}{cc} a & b\\
c & d\end{array}\right), \phi(\tau) = \pm\sqrt{c\tau+d}\right)$$
where $\left(\begin{array}{cc}a & b \\ c & d\end{array}\right) \in SL_{2}(\R)$
and $\phi(\tau)$ is a choice of square root of the function $c\tau+d$ on the upper-half plane $\mathcal{H}$.
The group structure is defined by the product 
$$\left(A_{1},\phi_{1}(\tau)\right)\cdot\left(A_{2},\phi_{2}(\tau)\right) 
= \left(A_{1}A_{2}, \phi_{1}(A_{2}\tau)\phi_{2}(\tau)\right).$$  
Here, we write $A\tau$ for the usual action of $SL_{2}(\R)$ on $\tau\in\mathcal{H}$.

The group $Mp_{2}(\Z)$ is the preimage of $SL_{2}(\Z)$ under the projection map  
$$\pi: Mp_{2}(\R) \rightarrow SL_{2}(\R).$$
It is generated by the two elements
$$T = \left(\left(\begin{array}{cc} 1 & 1\\ 0 & 1\end{array}\right), 1\right),
S =  \left(\left(\begin{array}{cc} 0 & -1\\ 1 & 0\end{array}\right), \sqrt{\tau}\right),$$
where $\sqrt{\tau}$ denotes the choice of square root with positive real part.

Suppose we are given a representation 
$\rho$ of $Mp_{2}(\Z)$ on a finite-dimensional 
complex vector space $V$ with the property that $\rho$ factors 
through a finite quotient.  Given $k \in \frac{1}{2}\Z$,
we define a modular form of weight $k$ and type $\rho$ to be a holomorphic function
$$f: \mathcal{H} \rightarrow V$$
such that, for all 
$g =\left(A, \phi(\tau)\right)\in Mp_{2}(\Z)$,
we have
$$f(A\tau) = \phi(\tau)^{2k}\cdot \rho(g)(f(\tau)).$$  For $k\in \Z$ and $\rho$ trivial, this 
reduces to the usual transformation rule.

If we fix an eigenbasis $\{v_{\gamma}\}$ for $V$ 
with respect to $T$, we can take the Fourier expansion 
of each component of $f$ at the cusp at infinity.  That is, we write
$$f(\tau) = \sum_{\gamma} \sum_{k\in \Z} c_{k,\gamma} q^{k/R} v_{\gamma} \in V$$ 
where 
$$q = e^{2\pi i \tau}$$
and
$R$ is the smallest positive integer for which $T^{R}\in \mathrm{Ker}(\rho)$.    
The function $f$ is holomorphic at infinity if $c_{k,r} = 0$
for $k < 0$.  The space $\mathrm{Mod}(Mp_{2}(\Z), k,\rho)$ of holomorphic modular forms of 
weight $k$ and type $\rho$ is finite-dimensional.

 Given an integral lattice $M$ with
an even bilinear form $\lan,\ran$ with signature $(2,n)$, we associate to $M$ the following unitary representation of $Mp_{2}(\Z)$.
Let $$M^{\vee} \subset M \otimes \Q$$
denote the dual lattice and $M^{\vee}/M$ the finite quotient.  
The pairing $\lan ,\ran$ extends linearly
to a $\Q$-valued pairing on $M^{\vee}$.  The functions 
$\frac{1}{2}\lan \gamma,\gamma\ran$ and $\lan \gamma,\delta\ran$ descend to $\Q/\Z$-valued
functions on $M^{\vee}/M$.

We construct a representation $\rho_{M}$ of $Mp_{2}(\Z)$ on the group algebra
$\com[M^{\vee}/M]$.  
It suffices to define $\rho_{M}$ in terms of the action of the generators $T$ and $S$ with respect
to the standard basis $v_{\gamma}$ for  $\gamma \in M^{\vee}/M$,

\begin{align*}
\rho_{M}(T)v_{\gamma} &= e^{2\pi i\frac{\lan\gamma,\gamma\ran}{2}} v_{\gamma}\ ,\\
\rho_{M}(S)v_{\gamma} &= \frac{\sqrt{i}^{n-2}}{\sqrt{|M^{\vee}/M|}}
\sum_{\delta} e^{-2\pi i \lan\gamma,\delta\ran} v_{\delta}\ .
\end{align*}

Let $N$ denote the smallest positive integer
for which $N\lan \gamma,\gamma\ran/2 \in \Z$ for all $\gamma \in M^{\vee}$.
The representation factors through a double cover of $SL_2(\mathbb{Z}/N\mathbb{Z})$. 
We will be primarily interested in
the dual representation $\rho_{M}^{\ast}$ of $Mp_{2}(\Z)$ on $\com[M^{\vee}/M]$.  We have 
given the action of $\rho_{M}$ to match Borcherds' notation.

\subsection{Heegner divisors}

Given the lattice $M$ of type $(2,n)$ as before, 
consider the Hermitian symmetric domain
$$\mathcal{D} = \left\{ \omega \in \proj(M \otimes_{\Z}\com)\ |\ \lan \omega,\omega\ran = 0, 
\lan \omega, \bar{\omega}\ran > 0 \right\}$$
naturally associated to $M$.   We will study the quotient
\begin{equation}\label{s352}
\mathcal{X}_{M} = \mathcal{D}/\Gamma_{M}
\end{equation}
of $\mathcal{D}$ by the arithmetic subgroup of $O(2,n)$
$$\Gamma_{M} = \left\{ g \in \mathrm{Aut}(M)\ |\ g \text{ acts trivially on } M^{\vee}/M\right\}.$$
The quotient \eqref{s352} is a quasi-projective algebraic variety.

For every $n\in\Q^{<0}$ and $\gamma \in M^{\vee}/M$, we associate a divisor class
$y_{n,\gamma} \in \mathrm{Pic}(\mathcal{X}_{M})$ as follows.  Given an element $v \in M^{\vee}$, there is an associated hyperplane
$$v^{\perp} = \left\{ \omega \in\mathcal{D}\ |\ \lan\omega,v\ran = 0\right\}.$$
Both $\lan v,v\ran$ and the residue class $v \bmod M$ are
invariant under the action of $\Gamma_M$.  Therefore, if we fix $n \in \Q$ 
and $\gamma \in M^{\vee}/M$, the set of $v \in M^{\vee}$ with 
$$\frac{1}{2}\lan v,v\ran = n,\ \  v \equiv \gamma \bmod M$$
is also $\Gamma_M$-invariant.  
The union over the set of the associated hyperplanes
$$\sum_{\shortstack{$\frac{1}{2}\lan v,v\ran = n$ \\$v \equiv \gamma \bmod M$}} v^{\perp}$$
is $\Gamma_M$-invariant and descends to an algebraic divisor
$$y_{n,\gamma}  = \left(\sum_{\frac{1}{2}\lan v,v\ran=n,\ v\equiv \gamma \bmod M} v^{\perp}\right)/\Gamma_{M}.$$
The $y_{n,\gamma}$ are the {\em Heegner divisors} of $\mathcal{X}_{M}$.
Because of the symmetry $v^{\perp} = (-v)^{\perp}$,
there is a redundancy $$y_{n,\gamma} = y_{n,-\gamma}$$ in our notation,
and $y_{n,\gamma}$ is multiplicity $2$ everywhere if $2\gamma \equiv 0 \bmod M$.  

In the degenerate case where $n = 0$, we have the following prescription.
The line bundle $\mathcal{O}(-1)$ on $\mathcal{D} \subset \proj(M\otimes_{\mathbb{Z}}\com)$
admits a natural $\Gamma_M$ action and therefore descends to a line bundle $K$ on 
$\mathcal{X}_{M}$.   If $n = 0$ and $\gamma = 0$, we set
$$y_{0,0} = K^{\ast}.$$
If $n=0$ and $\gamma \neq 0$, we set $y_{n,\gamma} = 0$.

We place the Heegner divisors 
in a formal power series $\Phi_{M}(q)$ with coefficients in
$\mathrm{Pic}(\mathcal{X}_M)\otimes \com[M^{\vee}/M]$.  More precisely, we consider the generating function 
$$\Phi(q) = \sum_{n\in \Q^{\geq 0}}\sum_{\gamma \in M^{\vee}/M} y_{-n,\gamma} q^{n} v_{\gamma} \in \mathrm{Pic}(\mathcal{X}_{M})[[q^{1/N}]]\otimes_{\Z} \com[M^{\vee}/M].$$

The main result of \cite{borch} together with the refinement of \cite{mcgraw} yield the following Theorem.

\vspace{10pt}
\noindent{\bf Theorem (\cite{borch},\cite{mcgraw})} {\em Let $M$ have signature $(2,n)$.
The generating function $\Phi(q)$ is an element of}
$$\mathrm{Pic}(\mathcal{X}_{M}) \otimes_{\Z} \mathrm{Mod}(Mp_{2}(\Z),1+\frac{n}{2},\rho_{M}^{\ast}).$$
\vspace{7pt}

As a consequence, given any linear functional
$$\lambda: \mathrm{Pic}(\mathcal{X}_{M})\otimes\com \rightarrow \com,$$
the contraction $\lambda(\Phi_{M}(q))$ is the Fourier expansion of a vector-valued
modular form of weight 
$1+\frac{n}{2}$ and type $\rho_{M}^{\ast}$.

Borcherds' proof uses the singular theta lift of \cite{borch2} to 
construct automorphic forms on $\mathcal{X}_{M}$ starting from vector-valued meromorphic modular forms on the upper half-plane.  
The zeroes and poles of these automorphic forms lie precisely along the Heegner
 divisors with multiplicity determined by the singular part of the initial modular form.  
Each such lifting gives a relation in $\mathrm{Pic}(\mathcal{X}_{M})$. The total collection of relations 
arising in this way are encoded in the modularity statement.

In \cite{borch2}, Borcherds only shows that $\Phi_{M}(q)$ lies in a 
certain Galois closure of the space of modular forms.  For the representations $\rho$ arising in \cite{borch2}, 
MacGraw proves in \cite{mcgraw} that  
$\mathrm{Mod}(Mp_{2}(\Z),k,\rho)$ admits a basis with rational coefficients. Therefore, the
Galois closure 
does not enlarge the space.

\subsection{Application to K3 surfaces}

Let $V$ be the rank 22 lattice obtained from the second cohomology of a $K3$ surface with fixed polarization $L$ of norm $l$.  
In order to apply Borcherds'
results to the moduli spaces $\mathcal{M}_{l}$, we consider the lattice
of signature $(2,19)$
 $$M = L^{\perp}= \left\{ v \in V \ |\ \lan L,v\ran = 0\right\}.$$   
A direct check yields
$$M \cong \Z w \oplus U^{2} \oplus E_{8}(-1)^{2}$$
 where $\lan w,w\ran = -l$.
Therefore $$M^{\vee}/M = \Z/l\Z$$
and is generated by $\frac{1}{l} w.$  Here, we will write $\rho_{l}$ for the representation
$\rho_{M}$.
 

From the definitions, we find
$\mathrm{Aut}(V, L) = \Gamma_{M}$, so
 we have the identification 
$$\mathcal{M}_{l} = \mathcal{X}_{M}.$$  
We claim the Heegner divisors correspond precisely to our Noether-Lefschetz divisors. 

\begin{Lemma} We have
$D_{h,d} = y_{n, \gamma}$,
where $$ n = -\frac{\Delta_{l}(h,d)}{2l} \ \text{ and }\  \gamma \equiv d (\frac{1}{l}w)\bmod M.$$
\end{Lemma}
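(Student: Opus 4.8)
The plan is to match the two descriptions of the divisors directly under the identification $\mathcal{M}_l = \mathcal{X}_M$. Recall that a point of $\mathcal{M}_l = \mathcal{M}^V_v/\Gamma_v$ is represented by a Hodge structure $\com\cdot\omega$ with $\omega \in v^\perp \otimes \com$, i.e.\ by a period point of $\mathcal{D} = \mathcal{X}_M$ lying in $M\otimes\com = (L^\perp)\otimes\com$. First I would unwind the definition of $D_{h,d}$: by \eqref{c67} it is the weighted sum over $(\Delta,\delta)$ of the rank-$2$ lattice loci $P_{\Delta,\delta}$, with multiplicity $\mu(h,d\,|\,\Delta,\delta)$ equal to the number of solutions $\beta$ to $\langle\beta,\beta\rangle = 2h-2$, $\langle\beta,v\rangle = d$ in the lattice attached to $(\Delta,\delta)$. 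A quasi-polarized $K3$ surface $(S,L)$ lies in the support of $D_{h,d}$ precisely when $\text{Pic}(S)$ contains such a class $\beta$; equivalently, writing $\beta = \beta_0 + \frac{d}{l}v$ with $\beta_0 \in M^\vee$ (this is where I use $\langle v,v\rangle = l$ and $\langle\beta,v\rangle = d$), the period point $\omega$ lies in $\beta_0^\perp$.

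Next I would compute the invariants of $\beta_0$ in the dual lattice $M^\vee$. From $\beta = \beta_0 + \frac{d}{l}v$ and orthogonality of the decomposition $V\otimes\Q = (M\otimes\Q)\oplus \Q v$, one gets $\langle\beta_0,\beta_0\rangle = \langle\beta,\beta\rangle - \frac{d^2}{l} = (2h-2) - \frac{d^2}{l}$, so
$$\tfrac{1}{2}\langle\beta_0,\beta_0\rangle = \frac{l(2h-2) - d^2}{2l} = -\frac{d^2 - 2lh + 2l}{2l} = -\frac{\Delta_l(h,d)}{2l},$$
which is exactly the value $n$ in the statement. Also, since $M^\vee/M \cong \Z/l\Z$ is generated by $\frac{1}{l}w$ and $\beta_0 \equiv \frac{d}{l}w \bmod M$ under the identification of generators (here one checks that the dual generator matches $\frac{d}{l}v$ modulo $M$ up to sign — a small lattice-theoretic verification using $M \cong \Z w \oplus U^2 \oplus E_8(-1)^2$), the residue class of $\beta_0$ is $\gamma \equiv d(\frac{1}{l}w)$. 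Thus each $\beta\in\text{Pic}(S)$ contributing to $D_{h,d}$ corresponds to a vector $v = \beta_0 \in M^\vee$ with $\frac{1}{2}\langle v,v\rangle = n$ and $v\equiv\gamma\bmod M$, and conversely; the associated hyperplanes $\beta_0^\perp$ in $\mathcal{D}$ are exactly the hyperplanes defining $y_{n,\gamma}$.

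Finally I would check that the multiplicities agree. On the Heegner side, $y_{n,\gamma}$ counts each vector $v\in M^\vee$ with the prescribed norm and residue, with the symmetry $v^\perp = (-v)^\perp$ identified with $y_{n,-\gamma}$; on the Noether-Lefschetz side, $\mu(h,d\,|\,\Delta,\delta)\in\{0,1,2\}$ counts solutions $\beta$ to \eqref{ggggg}, and the map $\beta\mapsto\beta_0$ is a bijection onto the relevant set of $M^\vee$-vectors. Since $P_{\Delta,\delta}$ with its multiplicity is precisely the image in $\mathcal{X}_M$ of the $\Gamma_M$-orbit of hyperplanes $\beta_0^\perp$ for $\beta_0$ in a fixed $(\Delta,\delta)$-class, summing \eqref{c67} over $(\Delta,\delta)$ reproduces the full union of hyperplanes defining $y_{n,\gamma}$ with matching multiplicities. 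The degenerate case $\Delta_l(h,d) = 0$ (giving $n=0$) should be excluded or handled by the convention $y_{0,\gamma} = 0$ for $\gamma\neq 0$, consistent with $D_{h,d}$ being defined only for $\Delta_l(h,d) > 0$.

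I expect the main obstacle to be the bookkeeping of the discriminant-form generator: pinning down the precise sign and normalization so that the residue class of $\beta_0$ in $M^\vee/M \cong \Z/l\Z$ is $d$ times the chosen generator $\frac{1}{l}w$ (rather than $-d$, or $d$ times some unit), and verifying that the multiplicity-$2$ phenomenon when $2\gamma\equiv 0$ on the Heegner side corresponds exactly to the $\pm$ ambiguity built into the coset $\delta\in(\Z/l\Z)/\pm$ and the count $\mu\in\{0,1,2\}$ on the Noether-Lefschetz side. Everything else is a direct comparison of definitions once the isomorphism $M\cong \Z w\oplus U^2\oplus E_8(-1)^2$ and the orthogonal decomposition $V\otimes\Q = (M\otimes\Q)\oplus\Q v$ are in hand.
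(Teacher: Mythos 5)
Your proposal is correct and follows essentially the same route as the paper: project $\beta$ to $M=L^{\perp}$, compute $\tfrac{1}{2}\langle \beta_0,\beta_0\rangle = -\Delta_l(h,d)/2l$ and the residue class $d\cdot(\tfrac{1}{l}w)$ in $M^{\vee}/M$, and observe that $\beta^{\perp}=\beta_0^{\perp}$ gives a bijection of the defining hyperplanes. The extra care you take with multiplicities and the $n=0$ convention is reasonable but not needed beyond what the hyperplane bijection already provides.
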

\bpf
The Noether-Lefschetz divisor $D_{h,d}$ is the quotient by $\Gamma_{M}$
of the union of hyperplanes
$$\sum_{\shortstack{$\lan \beta,\beta\ran = 2h-2$\\ $\lan L,\beta\ran = d$}} \beta^{\perp}.$$
It therefore suffices to establish a bijection between the two sets of hyperplanes.
Given an element $ \beta \in V$ satisfying 
$$  \lan \beta,\beta\ran = 2h-2, \ \    \lan \beta,L\ran = d,$$ 
let $v  = \beta - \frac{d}{l}L \in M\otimes_{\mathbb{Z}} \Q$ be the projection of $\beta$ to $M = L^{\perp}$.
A direct calculation shows 
\begin{align*}
\frac{1}{2}\lan v,v\ran &= h-1 - \frac{d^{2}}{2l} = - \frac{\bigtriangleup_{l}(h,d)}{2l}\  ,\\
v &\equiv d\cdot (\frac{1}{l} w) \bmod M\ .
\end{align*}

Conversely, given $v \in M^{\vee}$ satisfying the above conditions,
$$\beta = v + \frac{d}{l} L$$ gives
the inverse construction.
Since $\beta^{\perp } = v^{\perp}$, we obtain the result.
\epf

It is important for our applications that the constant term $y_{0,0}$ of $\Phi_{M}(q)$ 
matches with the line bundle
$K^{\ast}$ from our excess calculation in the proof of Theorem 1. 
 This occurs because 
automorphic forms can be viewed as sections of powers of $K^{\ast}$ on $\mathcal{M}_{l}$.

Let $\pi$ be  a 1-parameter family   of 
quasi-polarized $K3$ surfaces of degree $l$, and let $\iota$ be
the associated morphism to moduli space:
$$\pi: X \rightarrow C,$$
$$\iota:  C \rightarrow \mathcal{M}_{l}.$$
We can apply Borcherds' theorem to the functional on $\mathrm{Pic}(\mathcal{M}_l)$ given by
$$D \mapsto \int_{C} \iota^{\ast}D.$$

\begin{Corollary} \label{333}
There is 
a vector-valued modular form of weight $21/2$ and type $\rho_{l}^{\ast}$,
$$\Phi^{\pi}(q) = \sum_{r=0}^{l-1} \Phi^{\pi}_{r}(q)v_{r} \in \com[[q^{1/2l}]]\otimes \com[\Z/l\Z],$$
with nonzero coefficients determined by the equality
$$NL^{\pi}_{h,d} = \Phi^{\pi}_{r}\left[ \frac{\bigtriangleup_{l}(h,d)}{2l}\right]$$
where $r \equiv d \bmod l$.
\end{Corollary}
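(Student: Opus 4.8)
The plan is to deduce Corollary \ref{333} directly from the Borcherds--McGraw Theorem together with the identification of divisors established in the preceding Lemma. First I would apply the cited Theorem to the lattice $M = L^\perp$ of signature $(2,19)$, so that $n = 19$ and the weight is $1 + \frac{n}{2} = \frac{21}{2}$, and the type is $\rho_M^\ast = \rho_l^\ast$. This produces the universal generating function $\Phi_M(q) \in \mathrm{Pic}(\mathcal{M}_l)\otimes \mathrm{Mod}(Mp_2(\Z), \tfrac{21}{2}, \rho_l^\ast)$ whose $(-n,\gamma)$-coefficient is the Heegner divisor class $y_{n,\gamma}$. Then I would contract $\Phi_M(q)$ against the linear functional $\lambda_\pi \colon \mathrm{Pic}(\mathcal{M}_l)\otimes \com \to \com$ defined by $D \mapsto \int_C \iota^\ast D$; by the ``As a consequence'' clause following the Theorem, $\Phi^\pi(q) := \lambda_\pi(\Phi_M(q))$ is a vector-valued modular form of weight $21/2$ and type $\rho_l^\ast$.

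The second step is to match coefficients. Writing $\Phi^\pi(q) = \sum_{r=0}^{l-1}\Phi^\pi_r(q) v_r$, I would read off that the coefficient of $q^n v_\gamma$ in $\Phi_M(q)$ is $y_{-n,\gamma}$ for $n \in \Q^{\geq 0}$, so the coefficient of the corresponding monomial in $\Phi^\pi(q)$ is $\int_C \iota^\ast y_{-n,\gamma}$. Now apply the Lemma: for $\bigtriangleup_l(h,d) > 0$ we have $D_{h,d} = y_{n,\gamma}$ with $n = -\bigtriangleup_l(h,d)/(2l) < 0$ and $\gamma \equiv d(\tfrac{1}{l}w) \bmod M$, i.e. the index $r \equiv d \bmod l$ under the identification $M^\vee/M = \Z/l\Z$. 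Hence
$$
\Phi^\pi_r\!\left[\frac{\bigtriangleup_l(h,d)}{2l}\right] = \int_C \iota^\ast y_{n,\gamma} = \int_C \iota^\ast[D_{h,d}] = NL^\pi_{h,d},
$$
the last equality being the definition \eqref{def11} (consistent with \eqref{yyy12} and the discussion of $\mathcal{M}_l = \mathcal{M}^V_v/\Gamma_v$ since $\bigtriangleup_l(h,d) \neq 0$). The normalization of $q$-powers works out because $N$, the smallest integer with $N\langle\gamma,\gamma\rangle/2 \in \Z$ for all $\gamma \in M^\vee$, equals $2l$ here (as $M^\vee/M = \Z/l\Z$ generated by $\tfrac1l w$ with $\langle w,w\rangle = -l$), matching the claimed $q^{1/2l}$ expansion.

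The one genuinely non-formal point, which I would address with care, is the treatment of the constant term and of the case $\bigtriangleup_l(h,d) = 0$, i.e. the $n = 0$ Heegner divisors. The Corollary asserts the equality of \emph{nonzero} coefficients, so strictly one only needs $n < 0$; but to be complete I would invoke the remark, already made in the excerpt, that $y_{0,0} = K^\ast$ matches the line bundle arising in the excess calculation in the proof of Theorem 1, so that even the $\bigtriangleup_l(h,d) = 0$ values defined via \eqref{def22} are captured. Thus the main obstacle is not a proof obstacle at all but a bookkeeping one: correctly tracking the identification $M^\vee/M \cong \Z/l\Z$, the sign conventions relating $n$ to $\bigtriangleup_l(h,d)$ via the Lemma, and the level/weight data so that the claimed form $\Phi^\pi(q) \in \com[[q^{1/2l}]]\otimes\com[\Z/l\Z]$ of weight $21/2$ and type $\rho_l^\ast$ is exactly what Borcherds' Theorem outputs after contraction. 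Everything else is a direct citation of the Theorem and the Lemma.
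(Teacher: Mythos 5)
Your proposal is correct and follows exactly the paper's route: the paper derives Corollary 3 by applying the Borcherds--McGraw theorem to the lattice $M=L^\perp$ of signature $(2,19)$ and contracting $\Phi_M(q)$ against the functional $D\mapsto \int_C\iota^*D$, with the preceding Lemma supplying the identification $D_{h,d}=y_{n,\gamma}$. Your additional bookkeeping on the $q^{1/2l}$ normalization and the $y_{0,0}=K^*$ constant term matches the surrounding discussion in the paper.
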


\subsection{Quartic $K3$ surfaces}
\label{qqk3}
We now apply Borcherds' modularity to the study of $K3$ surfaces of degree $4$.  If 
$l=4$, the isomorphism class of a rank two lattice $(\mathbb{L},v)$ with primitive polarization 
$\lan v,v \ran =l$ is 
determined only by the discriminant $\bigtriangleup$.

Given a 1-parameter family $\pi: X \rightarrow C$ of quasi-polarized $K3$
surfaces of degree 4, we have the generating function
$$\Phi^{\pi}(q) = \Phi^{\pi}_{0}(q) v_{0} + \Phi^{\pi}_{1}(q) v_{1} + \Phi^{\pi}_{2}(q) v_{2} + \Phi^{\pi}_{3}(q) v_{3}$$
which is a modular form of weight $21/2$ and type $\rho^*_{4}$ by Corollary \ref{333}.

Consider the scalar-valued power series
$$\phi^{\pi}(q) = \Phi^{\pi}_{0}(q) + \frac{1}{2}\Phi^{\pi}_{1}(q) + \Phi^{\pi}_{2}(q) + \frac{1}{2}\Phi^{\pi}_{3}(q).$$
By chasing definitions, we see  $\phi^{\pi}(q)$ has the following property: 
\begin{equation}\label{zz3z}
NL^{\pi}_{h,d} = \phi^{\pi}\left[\frac{\bigtriangleup_{4}(h,d)}{8}\right].
\end{equation}
The factor of $1/2$ is included to correct for the redundancy $$\Phi^{\pi}_{1}(q) = \Phi^{\pi}_{3}(q).$$
\setcounter{Proposition}{4}

\begin{Proposition} The function
$\phi^{\pi}(q)$ is a homogeneous polynomial of degree $21$ in 
$$A= \sum_{n\in\Z} q^{\frac{n^{2}}{8}} \ \text{ and } \
B = \sum_{n \in \Z} (-1)^{n} q^{\frac{n^{2}}{8}}.$$
\end{Proposition}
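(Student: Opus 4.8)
The strategy is to identify $\phi^\pi(q)$ as a scalar modular form on a congruence subgroup and then match it against the known structure of the space of such forms, which is spanned by monomials in $A$ and $B$. By Corollary \ref{333}, $\Phi^\pi(q)$ is a vector-valued modular form of weight $21/2$ and type $\rho_4^*$ for $Mp_2(\Z)$. The linear combination defining $\phi^\pi$ is precisely the pairing of $\Phi^\pi$ against a distinguished vector in $\com[\Z/4\Z]$ (namely $v_0 + \tfrac12 v_1 + v_2 + \tfrac12 v_3$), so the first task is to check that this pairing is equivariant in the right way: I would verify that the functional $\ell(\sum_r c_r v_r) = c_0 + \tfrac12 c_1 + c_2 + \tfrac12 c_3$ is invariant (up to the standard automorphy factor) under $\rho_4^*(T)$ and $\rho_4^*(S)$, using the explicit formulas for $\rho_M(T), \rho_M(S)$ given in the excerpt with $M = \Z w \oplus U^2 \oplus E_8(-1)^2$, $\langle w,w\rangle = -4$, $n = 19$. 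Concretely $\rho_4(T)$ acts on $v_r$ by $e^{-2\pi i r^2/8}$ and $\rho_4(S)$ by a normalized discrete Fourier transform with a fixed eighth root of unity prefactor; dualizing and contracting against $\ell$ should show $\phi^\pi$ transforms as a scalar modular form of weight $21/2$ on $\Gamma_0(8)$ with a specific character (the quadratic character mod $8$ coming from the theta multiplier raised to the $21$st power).

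\textbf{Key steps.} First, establish that $\phi^\pi(q)$ is a holomorphic modular form of weight $21/2$ on $\Gamma_0(8)$ with the theta-type character: this follows from the $Mp_2(\Z)$-transformation of $\Phi^\pi$ by descending along the chosen functional, checking holomorphy at all cusps of $X_0(8)$ (holomorphy at $\infty$ is immediate from the shape of the Fourier expansion in Corollary \ref{333}; the other cusps require translating the vector-valued holomorphy at the single cusp of $Mp_2(\Z)$ into the several cusps downstairs, which is routine once the inducing data is written out). Second, observe that $A$ and $B$ are each modular forms of weight $1/2$ on $\Gamma_0(8)$: $A = \theta(\tau) = \sum q^{n^2/8}$ after rescaling $q \to q^{1/8}$ — more precisely $A(\tau)$ in the variable with $q = e^{2\pi i \tau}$ is the classical Jacobi theta $\theta_3(4\tau)$ up to normalization, and $B$ is $\theta_3$ with a sign twist, i.e. $\theta_3(4\tau)$ replaced by an oddly-indexed variant; both lie in $M_{1/2}(\Gamma_0(8), \chi)$ for the relevant character $\chi$. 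Hence every degree-$21$ monomial $A^a B^{21-a}$ lies in $M_{21/2}(\Gamma_0(8), \chi^{21}) = M_{21/2}(\Gamma_0(8),\chi)$. Third, invoke the structure theorem: the ring $\bigoplus_k M_{k/2}(\Gamma_0(8))$ (or the relevant character-eigenspace) is generated by $A$ and $B$ — equivalently, $\dim M_{21/2} = 22$ and the $22$ monomials $A^{21}, A^{20}B, \dots, B^{21}$ are linearly independent — so $\phi^\pi$, being a weight-$21/2$ form with the correct character, is automatically a homogeneous degree-$21$ polynomial in $A$ and $B$. This reduces Proposition 6 to a finite linear-algebra statement about theta series.

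\textbf{Main obstacle.} The delicate point is the character/multiplier-system bookkeeping: one must pin down exactly which character $\chi$ on $\Gamma_0(8)$ arises for $\phi^\pi$ and confirm it coincides with the character of $A^{21}$ (and every $A^a B^{21-a}$). This means carefully unwinding the relation between the Weil representation $\rho_4^*$ of $Mp_2(\Z)$ and scalar modular forms on $\Gamma_0(8)$: the representation $\rho_M$ factors through a double cover of $SL_2(\Z/8\Z)$ (here $N = 8$ since $\langle w,w\rangle/2 = -2$ and we need $N\langle\gamma,\gamma\rangle/2 \in \Z$ with $\gamma = \tfrac14 w$, giving $N\cdot(-1/8)\in\Z$, i.e. $8\mid N$), and the chosen functional $\ell$ must be an eigenvector for the $\Gamma_0(8)$-part of this action with the right eigencharacter. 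I expect this to require checking compatibility of the prefactor $\sqrt{i}^{\,n-2} = \sqrt{i}^{\,17}$ in $\rho_M(S)$ against the theta-multiplier to the $21$st power — a finite root-of-unity computation, but one where sign errors are easy. Once $\chi$ is identified, the linear independence of the $22$ theta monomials is a standard fact (they have distinct leading behaviors / can be checked via a finite number of Fourier coefficients), so the remainder is bookkeeping rather than substance. The precise polynomial — the coefficients exhibited in $2^{22}\Theta$ in Section \ref{mform} — is then determined in Section \ref{quarticcalc} by computing enough Noether-Lefschetz numbers via Corollary \ref{ooooo} and mirror symmetry, which lies beyond this Proposition.
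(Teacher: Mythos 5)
You assert that the functional $\ell = v_0 + \tfrac12 v_1 + v_2 + \tfrac12 v_3$ descends $\Phi^\pi$ to a scalar modular form of weight $21/2$ with a single character on $\Gamma_0(8)$, and then that all $22$ monomials $A^aB^{21-a}$ span the resulting space. Neither claim survives inspection. First, $\ell$ cannot be equivariant for $\rho_4^*(T)$: the $T$-eigenvalues on $v_0,v_1,v_2,v_3$ are $e^{-\pi i r^2/4} = 1,\ e^{-\pi i/4},\ -1,\ e^{-\pi i/4}$, which are not all equal, so $\phi^\pi$ is not an eigenform for any group containing $\tau\mapsto\tau+1$. This is visible directly in the $q$-expansion of $\Theta$, whose exponents $0,\ 1,\ 9/8,\ 3/2,\dots$ have several distinct residues mod $1$. (Relatedly, $A=\sum q^{n^2/8}$ is $\theta_3(\tau/4)$, not $\theta_3(4\tau)$, which is why the correct group is $\Gamma^0(8)$, the subgroup with $b\equiv 0 \bmod 8$, rather than $\Gamma_0(8)$.) Second, even on $\widetilde{\Gamma}^0(8)$ the forms $A$ and $B$ carry \emph{distinct} characters $\chi_+\neq\chi_-$ with $\chi_+^2=\chi_-^2$, so the $22$ monomials $A^aB^{21-a}$ split into two character eigenspaces according to the parity of $a$; they do not all lie in one space $M_{21/2}(\chi)$, and $\phi^\pi$ itself lies in neither eigenspace. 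Your ``structure theorem'' step therefore fails as stated, and since the conclusion of the Proposition genuinely mixes monomials of both parities, no single-character argument can reach it.

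The paper repairs exactly this point by decomposing $\phi^\pi = \tfrac34\phi_+ + \tfrac14\phi_-$, where $\phi_\pm$ are the sum and alternating sum of the components $\Phi^\pi_r$; these \emph{are} eigenforms for $\widetilde{\Gamma}^0(8)$ with characters $\chi_+$ and $\chi_-$, matching those of $A$ and $B$ respectively. One then multiplies: $\phi_+\cdot A$ and $\phi_-\cdot B$ both land in the integral-weight space $\mathrm{Mod}(\Gamma^0(8),11,\chi)$, which is $12$-dimensional with basis $A^{22}, A^{20}B^2,\dots,B^{22}$ (only even powers of $B$, again because of the character constraint). Dividing back by $A$ (resp.\ $B$) and using that $B^{22}/A$ and $A^{22}/B$ fail to be holomorphic at the boundary shows each of $\phi_\pm$ is a degree-$21$ homogeneous polynomial in $A,B$, hence so is $\phi^\pi$. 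Your plan is in the right spirit --- descend to scalar forms and use a finite-dimensional spanning argument, and you correctly flag the multiplier bookkeeping as the danger zone --- but the missing idea is precisely the $\phi_\pm$ splitting together with the multiply-then-divide trick; without it the argument does not close.
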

\bpf
While the vector $\Phi^{\pi}(q)$ is modular with respect to the full metaplectic group, $\phi^{\pi}(q)$ 
is a priori only modular with respect to the subgroup $\widetilde{\Gamma}(8) = \mathrm{Ker}
(\rho^*_{4})$.
However, we can write $\phi^\pi(q)$ as a sum 
$$\phi^{\pi}(q) = \frac{3}{4}\phi_{+}(q) + \frac{1}{4}\phi_{-}(q)$$
where 
$$\phi_{+}(q) =  \Phi^{\pi}_{0}(q) + \Phi^{\pi}_{1}(q) +\Phi^{\pi}_{2}(q) + \Phi^{\pi}_{3}(q),$$  
$$\phi_{-}(q) =  \Phi^{\pi}_{0}(q) - \Phi^{\pi}_{1}(q) + \Phi^{\pi}_{2}(q) - \Phi^{\pi}_{3}(q).$$

Consider the congruence subgroup of $SL_{2}(\Z)$
$$\Gamma^{0}(8) = 
\left\{\left(\begin{array}{cc}a & b \\ c & d \end{array}\right)\in SL_{2}(\Z)\ |\ b \equiv 0 \bmod{8}\right\}.$$
A direct calculation of the representation $\rho^*_{4}$ shows 
that $\phi_{+}(q)$ and $\phi_{-}(q)$
are modular forms of weight $21/2$ with respect to 
$$\widetilde{\Gamma}^{0}(8) = \left\{ (A, \phi) \in Mp_{2}(\Z)\ |\ A \in \Gamma^{0}(8)\right\}$$
and distinct characters 
$$\chi_{+},\chi_{-}: \widetilde{\Gamma}^{0}(8) \rightarrow \com^{\ast}.$$
Moreover, $A$ and $B$ are modular forms of weight $1/2$ with respect
to $\widetilde{\Gamma}^{0}(8)$ and the same characters $\chi_{+}$ and $\chi_{-}$
respectively.

We will not describe $\chi_{\pm}$ explicitly.  While they are distinct, their squares are equal and 
$\chi = \chi_{+}^{2} = \chi_{-}^{2}$ descends to a character
$$\chi:\Gamma^{0}(8) \rightarrow \com^{\ast}.$$
The character $\chi$ is specified completely by the following evaluations:
$$\chi(\Gamma^{1}(8)) = 1,\ \chi\left(\begin{array}{cc} -1 & 0\\ 0 & -1\end{array}\right) = -1,\
\chi\left(\begin{array}{cc} 3 & 8\\ 1 & 3\end{array}\right) = -1$$
where 
$$\Gamma^{1}(8) = 
\left\{\left(\begin{array}{cc}a & b \\ c & d \end{array}\right)\in SL_{2}(\Z)\ |\
 b \equiv 0 \bmod{8}, a \equiv d \equiv 1 \bmod{8}\right\}.$$

Consider the space $\mathrm{Mod}(\Gamma^{0}(8), 11, \chi)$ of holomorphic modular forms of 
weight $11$ and type $\chi$.  The space $\mathrm{Mod}(\Gamma^{0}(8), 11, \chi)$
is  $12$-dimensional space with basis
$$A^{22}, A^{20}B^{2}, \cdots, A^{2}B^{20}, B^{22}.$$
Both
$\phi_{+}(q)\cdot A$ and $\phi_{-}(q)\cdot B$ lie in $\mathrm{Mod}(\Gamma^{0}(8),11,\chi)$.
Since $A^{22}/B$ and $B^{22}/A$ are not holomorphic at the boundary, we conclude $\phi_{\pm}(q)$
are each homogeneous polynomials of degree $21$ in $A$ and $B$ and therefore so is $\phi^{\pi}(q)$.
\epf

\section{Lefschetz pencil of quartics}\label{quarticcalc}
\subsection{Quartics} \label{quar}
A general Lefschetz pencil of quartics can be viewed as a hypersurface of type $(4,1)$,
\begin{equation}\label{f4563}
\pi: X_{4,1} \subset \proj^3 \times \proj^1 \rarr \proj^1
\end{equation}
where the last projection is onto the second factor.
Unfortunately, $\pi$ contains 108 nodal fibers, so the family \eqref{f4563}
does not fit the specifications of Section \ref{quasp}.

A family of quasi-polarized $K3$ surfaces of degree 4 can be obtained from the
Lefschetz pencil $\pi$ by the following construction.
Let 
\begin{equation}\label{f456}
\epsilon: C_{53} \stackrel{2-1}{\longrightarrow} \proj^1
\end{equation}
be the genus 53 hyperelliptic curve branched over the 108 points of $\proj^1$
corresponding to the nodal fibers of $\pi$.
The family
$$\epsilon^*(X_{4,1}) \rarr C_{53}$$
has 3-fold double point singularities over the 108 nodes of the fibers of the original family $\pi$.
Let
$$\widetilde{\pi}: \widetilde{X} \rarr C_{53}$$
be obtained from a small resolution 
$$\widetilde{X} \rarr \epsilon^*(X_{4,1}).$$
Then, $\widetilde{\pi}$ is easily seen to be a family of quasi-polarized $K3$ surfaces
of degree 4. The quasi-polarization is the pull-back of $\oh_{\proj^3}(1)$.

\subsection{Invariants}

The Noether-Lefschetz numbers are defined in Section \ref{nlnums} only for the family $\widetilde{\pi}$.
However, for convenience, we define
$$NL^{\pi}_{g,d}= \frac{1}{2}NL^{\widetilde{\pi}}_{g,d}\ \ .$$
Instead of a curve class $\gamma$, the degree $d$ against the polarization
is taken as the second subscript.

The family $\widetilde{\pi}$ may be viewed 
as twice the Lefschetz pencil of quartics.
Let $$\pi_{4,2}:X_{4,2} \subset \proj^3\times \proj^1 \rarr \proj^1$$
be the family obtained from a nonsingular Calabi-Yau hypersurface.
The family ${\pi}_{4,2}$ may also be viewed as twice the Lefschetz pencil.

\begin{Lemma} $n_{g,d}^{\widetilde{X}} = n_{g,d}^{X_{4,2}}.$
\end{Lemma}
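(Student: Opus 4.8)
The plan is to show that $\widetilde{X}$ and $X_{4,2}$ are related by a deformation that leaves the vertical Gromov–Witten (hence BPS) invariants unchanged. Both threefolds are small resolutions of the pullback of the universal quartic along a map $B\to\proj^1$ that is branched exactly over the $108$ discriminant points of the Lefschetz pencil $\pi:X_{4,1}\to\proj^1$: in the case of $\widetilde{X}$ the base $B=C_{53}$ is the hyperelliptic double cover, while in the case of $X_{4,2}$ the base is (birationally) the $\proj^1$ obtained as a smooth member of the $(4,2)$ system, double covering the base $\proj^1$ in the same branched way. The key point is that the $K3$-fibered threefold near each of the $108$ critical values depends, up to the relevant equivalence, only on the local monodromy and the local resolution data, which are identical in the two constructions. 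So I would first set up both families as $K3$ fibrations $\widetilde{\pi}:\widetilde X\to C_{53}$ and $\pi_{4,2}:X_{4,2}\to \proj^1_{(2)}$ with the same generic fiber, the same quasi-polarization $\oh_{\proj^3}(1)$, and matching degenerations.

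The cleanest route is to invoke Theorem 1 (in the $NL$-form of Corollary/Theorem $1^*$, or directly the unrefined Theorem 1 with the $r_{g,m,h}$) to reduce the claimed equality $n^{\widetilde X}_{g,d}=n^{X_{4,2}}_{g,d}$ to an equality of Noether–Lefschetz numbers,
$$NL^{\widetilde\pi}_{m,h,d} = NL^{\pi_{4,2}}_{m,h,d}\quad\text{for all }m,h,d.$$
Since the reduced $K3$ invariants $r_{g,m,h}$ are intrinsic to the $K3$ fiber and independent of the family, Theorem 1 shows that the vertical BPS counts are determined entirely by the $NL$ numbers. Now both $NL$ numbers are computed, via the definition in Section~\ref{nnn}, as intersection numbers $\int_B \sigma^*[\mathcal D^{\mathcal V}_{m,h,\gamma}]$ against the period map $\sigma:B\to\mathcal M^{\mathcal V}$. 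Both period maps factor as: base $\to \proj^1$ (the branched double cover) $\to \mathcal M_4$ (the period map of the original Lefschetz pencil). Therefore each $NL$ number equals the degree of the pullback of the relevant Heegner/Noether–Lefschetz divisor on $\mathcal M_4$ along the composite $B\to\proj^1\xrightarrow{\iota_\pi}\mathcal M_4$, which by the projection formula equals $\deg(B\to\proj^1)$ times $\int_{\proj^1}\iota_\pi^*[D_{h,d}]$. Since $\deg(C_{53}\to\proj^1)=\deg(\proj^1_{(2)}\to\proj^1)=2$, the two $NL$ numbers agree — indeed both equal $2\,NL^\pi_{h,d}$ in the notation just introduced, consistent with the slogan that $\widetilde\pi$ and $\pi_{4,2}$ are each ``twice the Lefschetz pencil.''

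The one subtlety — and the place where care is genuinely needed — is the behavior over the $108$ branch points, where the original pencil has nodal fibers and $\sigma$ is not a priori defined there by the naive recipe. Near such a point the monodromy on $H^2$ is a single Picard–Lefschetz reflection in a $(-2)$-class; passing to the double cover kills this monodromy, the total space acquires ordinary threefold double points (conifold points) over the $108$ nodes, and after a small resolution one gets a smooth $K3$ fibration with a well-defined period map extending across these points. This local picture is literally the same for $\widetilde X$ and for $X_{4,2}$: both are small resolutions of the same local conifold singularities, with the same local monodromy and the same exceptional $\proj^1$'s. Hence the extended period maps $\sigma$ agree near the branch locus as well, and the intersection computation of the $NL$ numbers is identical. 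I expect the main obstacle to be pinning down precisely this local comparison — verifying that the two small resolutions produce deformation-equivalent (or at least GW-equivalent) local models, and that the extension of $\sigma$ across the $108$ points contributes identically (including any excess-intersection contributions of the type analyzed in the proof of Proposition~1) — rather than anything in the global bookkeeping, which is just the projection formula.
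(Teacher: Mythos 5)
There is a genuine gap, and it starts with the geometric setup. The fibration $\pi_{4,2}:X_{4,2}\to\proj^1$ is \emph{not} a small resolution of a pullback of $X_{4,1}$ along a double cover branched over the $108$ discriminant points: it is a smooth $(4,2)$ hypersurface whose induced curve in $\proj(\mathrm{Sym}^4V^*)$ is a conic, so it has $216$ nodal fibers with nontrivial Picard--Lefschetz monodromy around each, and no conifold points to resolve. Only $\widetilde X$ has the branched-cover/small-resolution structure (that is precisely why $\widetilde X$ was built). As a consequence, Theorem 1 does not apply to $\pi_{4,2}$ at all --- its hypotheses require every fiber to be a nonsingular quasi-polarized $K3$ surface --- and the numbers $NL^{\pi_{4,2}}_{m,h,d}$ are not defined in the paper's framework. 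Your projection-formula step also presupposes a morphism $\iota_\pi:\proj^1\to\mathcal M_4$ for the original pencil, which does not exist at the $108$ discriminant points; in the paper $NL^{\pi}_{h,d}$ is \emph{defined} as $\tfrac12 NL^{\widetilde\pi}_{h,d}$, so deducing $NL^{\widetilde\pi}_{h,d}=2NL^{\pi}_{h,d}$ this way is circular. Finally, the issue you defer as ``the main obstacle'' --- matching the contributions over the special fibers --- is the entire content of the lemma and cannot be waved through: the $108$ resolved fibers of $\widetilde\pi$ have rank-$2$ Picard lattices and contribute nontrivially to the Noether--Lefschetz numbers (this is exactly the correction term $\Psi$ in Corollary 2), while the $216$ nodal fibers of $\pi_{4,2}$ are not smooth $K3$'s and have no Noether--Lefschetz contribution in this sense, so the two period maps are genuinely different objects.

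The paper's proof avoids Noether--Lefschetz theory entirely and works purely with Gromov--Witten degeneration formulas: one degenerates $X_{4,2}$ to $X_{4,1}\cup_{K3}X_{4,1}$ and uses the degeneration formula of Li and Li--Ruan, together with the vanishing of the Gromov--Witten theory of $K3\times\proj^1$ in fiber classes (via the trivial degeneration $X_{4,1}\cup_{K3}(K3\times\proj^1)$) to identify relative and absolute invariants, giving $N^{X_{4,2}}_{g,d}=2N^{X_{4,1}}_{g,d}$. On the other side, one deforms the branch points of $\epsilon$ to general position, invokes Li--Ruan's conifold-transition invariance to get $N^{\widetilde X}_{g,d}=N^{X_t}_{g,d}$, and then degenerates the base curve $C_t$ to two copies of $\proj^1$ to obtain $N^{X_t}_{g,d}=2N^{X_{4,1}}_{g,d}$ as well. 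If you want to salvage your route, you would have to first prove an analogue of Theorem 1 for fibrations with nodal fibers and compute the nodal-fiber contributions independently --- which is substantially harder than the degeneration argument.
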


\bpf
It suffices to prove the analogous statement for Gromov-Witten invariants.
Consider the degeneration of $X_{4,2}$ to the union 
$$X_{4,1} \cup_{K3} X_{4,1}$$
of two $(4,1)$ hypersurfaces along a smooth $K3$ surface.  
The degeneration formula of \cite{liruan,junli} implies 
$$N_{g,d}^{X_{4,2}} = 2 N_{g,d}^{X_{4,1}/K3}$$
where the latter term denotes the Gromov-Witten theory 
of $X_{4,1}$ relative to the $K3$ fiber.
Since the Gromov-Witten theory of $K3\times\proj^{1}$ 
vanishes, the trivial degeneration 
$$X_{4,1}\cup_{K3}(K3\times \proj^{1})$$
yields the equality of relative and absolute invariants
$$N_{g,d}^{X_{4,1}} = N_{g,d}^{X_{4,1}/K3}.$$

To study the small resolution $\widetilde{\pi}$, consider the family of double covers
$$\epsilon_{t}:C_{t} \mapsto \proj^{1}$$
ramified at $108$ generic points which specializes to our particular
double cover \eqref{f456} 
as $t \rightarrow 0$.  The behavior of Gromov-Witten theory in the conifold transition from 
$$X_{t} = \epsilon_{t}^{\ast}(X_{4,1})$$ to $\tilde{X}$ has been calculated by Li and Ruan \cite{liruan}:
$$N_{g,d}^{\widetilde{X}} = N_{g,d}^{X_{t}}.$$
By degenerating the base $C_{t}$ to two copies of $\proj^{1}$, we have a degeneration
of $X_{t}$ to two copies of $X_{4,1}$ attached at $54$ smooth $K3$ fibers.
As before, we apply the degeneration formula and the identification of relative and absolute invariants to obtain the equality
$$N_{g,d}^{\widetilde{X}}=N_{g,d}^{X_{t}} = 2N_{g,d}^{X_{4,1}} = N_{g,d}^{X_{4,2}}.$$
\epf

Instead of studying the Gromov-Witten invariants of $\widetilde{X}$, we
may study the Gromov-Witten invariants of $X_{4,2}$.

\subsection{Mirror symmetry}
\subsubsection{Overview}
The genus 0 invariants of $X_{4,2}$ are determined from hypergeometric series by
the mirror transformation. The mirror formulas of Candelas, 
de la Ossa, Green, and
Parkes
\cite{cogp} have been proven mathematically
in many settings \cite{giv1,giv2, lly}. In particular, the case of $X_{4,2}$ is understood
rigorously. We follow
the notation of \cite{pgiv}.

\subsubsection{Potential}
Let the variables $T_1,T_2$ correspond to the hyperplane classes 
$$H_1\subset \proj^3, \ \ H_2\subset \proj^1$$
respectively.
The genus 0 potential of $X_{4,2}$ for classes restricted from $\proj^3\times \proj^1$ is
$$\mathcal{F}(T_1,T_2)= 
\frac{1}{3}T_1^3 +2         T_1^2 T_2+
 \sum_{d_1,d_2\geq 0, \  (d_1,d_2)\neq (0,0)} N^{X_{4,2}}_{0,(d_1,d_2)}\ e^{d_1T_1} e^{d_2T_2}$$
where
we follow the  Gromov-Witten notation of Section \ref{gwrev}. The curve class $(d_1,d_2)$
is not a fiber class for $\pi^{4.2}$ if $d_2>0$.

\subsubsection{Hypergeometric series}
Let $t_1,t_2$ be new variables. 
Define the hypergeometric series $I_{i,j}(t_1,t_2)$ by
\begin{multline*}\label{ggt}
\sum_{i=0}^3 \sum_{j=0}^1 I_{i,j}(t_1,t_2) H_1^iH_2^j = \\
\sum_{d_1,d_2\geq 0} 
e^{(H_1+d_1)t_1} e^{(H_2+d_2)t_2}
\frac{\Pi_{r=0}^{4d_1+2d_2} (4H_1+2H_2+r)} {\Pi_{r=1}^{d_1}(H_1+r)^4  
\ \Pi_{r=1}^{d_2}(H_2+r)^2 
}. 
\end{multline*}
The right side, taken mod $H_1^4$ and $H_2^2$, is  valued in $H^*(\proj^3\times \proj^1,\mathbb{Q})$.
Formally,
$$I_{i,j}(t_1,t_2) \in \mathbb{Q}[[t_1,e^{t_1}, t_2, e^{t_2}]].$$
The functions $I_{i,j}(t)$ form a solution of the
Picard-Fuchs differential equation
associated to the mirror geometry. 

\subsubsection{Mirror transformation}
The mirror transformation is defined using two auxiliary functions.
Let 
$$F(e^{t_1},e^{t_2}) = \sum_{d_1=0}^\infty \sum_{d_2=0}^\infty
e^{d_1t_1} e^{d_2t_2} \frac{(4d_1+2d_2)!}{(d_1!)^{4}(d_2!)^2},$$
and let
$$G_{a,b}(e^{t_1},e^{t_2})= \sum_{d_1=0}^{\infty} \sum_{d_2=0}^\infty e^{d_1t_1} e^{d_2t_2} \frac{(4d_1+2d_2)!}
{(d_1!)^{4}(d_2!)^2}
\Big( \sum_{r=1}^{ad_1+bd_2} \frac{1}{r}\Big)$$
for $a,b\geq 0$. 

The mirror transformation relating the variables $T_i$ and $t_i$ is determined by the
following equations:
$$T_1= t_1+ \frac{4(G_{4,2}(e^{t_1}, e^{t_2})-G_{1,0}(e^{t_1},e^{t_2}))}{F(e^{t_1},e^{t_2})},$$
$$T_2= t_2+ \frac{2(G_{4,2}(e^{t_1}, e^{t_2})-G_{0,1}(e^{t_1},e^{t_2}))}{F(e^{t_1},e^{t_2})}.$$
Exponentiation yields
$$
e^{T_1}= e^{t_1}\cdot\text{exp}\left(
\frac{4(G_{4,2}(e^{t_1}, 
e^{t_2})-G_{1,0}(e^{t_1},e^{t_2}))}{F(e^{t_1},e^{t_2})}\right),
$$
$$
e^{T_2}= e^{t_2}\cdot\text{exp}\left(
\frac{2(G_{4,2}(e^{t_1}, e^{t_2})
-G_{0,1}(e^{t_1},e^{t_2}))}{F(e^{t_1},e^{t_2})}\right).
$$
Together, the above four equations
define a change of variables from formal series in $T_1, e^{T_1}, T_2, e^{T_2}$
to formal series in $t_1, e^{t_1}, t_2, e^{t_2}$. The mirror transformation
is easily seen to be invertible.

\subsubsection{Genus 0 invariants}
The genus 0 potential $\mathcal{F}$ is determined by mirror symmetry,
\begin{multline*}
\mathcal{F}(T_1(t_1,t_2), T_2(t_1,t_2)) =  \\
\left(\frac{2I_{1,1}-I_{2,0}}{I_{1,0}}\right)  \left( \frac{I_{3,0}}{I_{1,0}}\right)
  + 2\left(\frac{I_{2,0}}{I_{1,0}}\right)\left( \frac{I_{2,1}}{I_{1,0}}\right)  -2
\left(\frac{I_{3,1}}{I_{1,0}}\right).
\end{multline*}
The arguments of the functions on the right side are understood to be $t_1$ and $t_2$.
The genus 0 BPS states $n_{0,d}^{X_{4,2}}$ are determined by
$\mathcal{F}$.

\subsection{Proof of Theorem 2}
Consider twice the Lefschetz pencil of quartics 
$$\widetilde{\pi}: \widetilde{X} \rarr C_{53}.$$
Corollary 1 in genus 0 is
\begin{equation}\label{fd23}
n_{0,d}^{\widetilde{X}}= \sum_{h=0}^\infty 
r_{0,h}\cdot  NL_{h,d}^{\widetilde{\pi}}\ .
\end{equation}

We now solve for the Noether-Lefschetz numbers of $\tilde{\pi}$.
By \eqref{zz3z},
$$NL^{\widetilde{\pi}}_{h,d} = \phi^{\widetilde{\pi}}\left[\frac{\bigtriangleup_{4}(h,d)}{8}\right]$$
where
$\phi^{\widetilde{\pi}}(q)$ is a homogeneous polynomial of degree 21 in $A$ and $B$.
We need only 22 equations to determine $\phi^{\widetilde{\pi}}(q)$.
Using the mirror symmetry calculation of $n_{0,d}^{\widetilde{X}}$,
equation \eqref{fd23} provides infinitely many relations.
In particular, $\phi^{\widetilde{\pi}}(q)$ is easily
determined by linear algebra.

The precise formula for $\phi^{\widetilde{\pi}}$ is $2\Theta$ where
$\Theta$ is given in Section \ref{mform} since $\widetilde{\pi}$ is
twice the Lefschetz pencil of quartics.
The modular form $\Theta$ was first computed
in \cite{germans}.

\subsection{Modular identity}\label{modid}
Equation \eqref{fd23} may be viewed as a rather intricate
relation between hypergeometric functions (after mirror
transformation) on the left and modular forms on the right.
Let $$\mathcal{G}(q) = -\frac{2}{q} +168
+ \sum_{d \geq 1} n_{0,d}^{X_{4,2}} q^{\frac{d^{2}}{8}}$$
be the generating function determined by the property
$$\sum_{d=1}^\infty \sum_{k =1}^\infty  n_{0,d}^{X_{4,2}} \frac{1}{k^{3}}e^{dkT_1}
= \left( \mathcal{F}(T_1,T_2) - \frac{1}{3}T_{1}^{3} - 2T_{1}^{2}T_2 \right)|_{e^{T_2}=0}$$
where $\mathcal{F}$ is determined as above.
\begin{Corollary}
We have the equality
$$\mathcal{G}(q) = 2\frac{\Theta(q)}{\Delta(q)}\ ,$$
where $\Theta(q)$ is given in Section \ref{mform} and
$$\Delta(q) = q \prod_{n=1}^\infty (1-q^n)^{24}\ .$$
\end{Corollary}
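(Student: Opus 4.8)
The plan is to view this Corollary as a generating-function repackaging of the ingredients already assembled for Theorem $2$: the genus-$0$ case of Theorem $1$ (equation \eqref{fd23}), Theorem $2$ itself, and the genus-$0$ Yau--Zaslow formula. First I would use the Lemma $n^{\widetilde{X}}_{g,d} = n^{X_{4,2}}_{g,d}$ to replace $n^{X_{4,2}}_{0,d}$ by $n^{\widetilde{X}}_{0,d}$ in the definition of $\mathcal{G}$, so that it remains to prove
\[ -\frac{2}{q} + 168 + \sum_{d\geq 1} n^{\widetilde{X}}_{0,d}\, q^{d^{2}/8} \;=\; 2\,\frac{\Theta(q)}{\Delta(q)}\ . \]
Here the numbers $n^{\widetilde{X}}_{0,d}$ are exactly those produced by the mirror-symmetry computation of the previous section, so the assertion is genuinely an identity between hypergeometric data and a modular form. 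Combining \eqref{fd23} with Theorem $2$ and the normalization $NL^{\widetilde{\pi}}_{h,d} = 2\,NL^{\pi}_{h,d}$ gives $n^{\widetilde{X}}_{0,d} = 2\sum_{h\geq 0} r_{0,h}\,\Theta\left[\tfrac{\bigtriangleup_{4}(h,d)}{8}\right]$, where $\bigtriangleup_{4}(h,d) = d^{2} - 8h + 8$.

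The key algebraic step is to read this double sum as a Cauchy product. The genus-$0$ Yau--Zaslow formula, proven in \cite{KMPS} and recalled in Section \ref{bpsk3}, reads $\sum_{h\geq 0} r_{0,h}\, q^{h} = \prod_{n\geq 1}(1-q^{n})^{-24} = q\,\Delta(q)^{-1}$, hence $\sum_{h\geq 0} r_{0,h}\, q^{h-1} = \Delta(q)^{-1}$. Since $\tfrac{\bigtriangleup_{4}(h,d)}{8} = \tfrac{d^{2}}{8} + 1 - h$, the coefficient of $q^{d^{2}/8}$ in the product $\Theta(q)\cdot\Delta(q)^{-1}$ is precisely $\sum_{h\geq 0} r_{0,h}\,\Theta\left[\tfrac{d^{2}}{8}+1-h\right]$, so $n^{\widetilde{X}}_{0,d}$ equals twice the $q^{d^{2}/8}$-coefficient of $\Theta(q)/\Delta(q)$ for every $d\geq 1$. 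For the polar and constant parts, one computes directly from $\Theta(q) = -1 + 108\,q + \cdots$ and $\Delta(q)^{-1} = q^{-1}(1 + 24\,q + \cdots)$ that $2\,\Theta(q)/\Delta(q) = -2q^{-1} + 2(108-24) + \cdots = -\tfrac{2}{q} + 168 + \cdots$, reproducing the prescribed low-order terms of $\mathcal{G}$. Matching these contributions coefficient by coefficient yields the claimed equality.

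I expect the one genuinely delicate point to be the bookkeeping hidden in the Cauchy-product step: one must verify that under the dictionary ``degree $d$ $\leftrightarrow$ exponent $d^{2}/8$'' every Fourier coefficient of $2\,\Theta/\Delta$ is accounted for --- in particular that the pole $-2/q$ and the constant $168$ come out correctly, which is precisely why those explicit summands are built into the definition of $\mathcal{G}$ --- using that the only discriminant values feeding into $\Theta$ are of the shape $\bigtriangleup_{4}(h,d)/8 = d^{2}/8 + 1 - h$. No geometric input beyond Theorem $2$ should be needed, and since only the genus-$0$ independence result of \cite{KMPS} enters (not Conjectures $1$ and $2$), the Corollary is unconditional.
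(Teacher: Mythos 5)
Your route is the paper's route: the Corollary is stated in Section \ref{modid} with no separate proof, being presented as an immediate repackaging of equation \eqref{fd23}, Theorem 2, the normalization $NL^{\pi}_{h,d}=\tfrac12 NL^{\widetilde{\pi}}_{h,d}$, and the genus $0$ Yau--Zaslow formula, and that is exactly the chain you assemble. Your Cauchy-product computation is the right mechanism: since $\tfrac{1}{\Delta(q)}=\sum_{h\geq 0}r_{0,h}q^{h-1}$ and $\tfrac{\bigtriangleup_4(h,d)}{8}=\tfrac{d^2}{8}+1-h$, the $q^{d^2/8}$-coefficient of $2\Theta/\Delta$ is $2\sum_h r_{0,h}\Theta[\tfrac{d^2}{8}+1-h]=\sum_h r_{0,h}NL^{\widetilde{\pi}}_{h,d}=n^{\widetilde{X}}_{0,d}=n^{X_{4,2}}_{0,d}$, and your check of the polar and constant terms ($-2/q$ and $2(108-24)=168$) is correct.

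The one place you overreach is precisely the point you yourself flag as delicate and then wave through. It is \emph{not} true that every Fourier coefficient of $2\Theta/\Delta$ is accounted for by the dictionary ``degree $d\leftrightarrow$ exponent $d^2/8$.'' The support of $\Theta$ lies in $\tfrac18\{k: k\equiv 0,1,4 \bmod 8\}$, so $2\Theta/\Delta$ has nonzero coefficients at exponents of the form $\tfrac{k}{8}+h-1$ which need not be of the form $d^2/8$. Concretely, the coefficient of $q^{1}$ in $2\Theta/\Delta$ is
$$2\left(\Theta[2]\cdot r_{0,0}+\Theta[1]\cdot r_{0,1}+\Theta[0]\cdot r_{0,2}\right)=2(76950+108\cdot 24-324)=158436\neq 0,$$
while $q^{1}$ does not occur in $\mathcal{G}$ because $8$ is not a perfect square; the same happens at $q^{17/8}$, etc. So the displayed equality cannot be read as an identity of full $q$-expansions; it is an equality of the coefficients indexed by $-1$, $0$, and $d^2/8$ for $d\geq 1$ (equivalently, the statement that $2\Theta/\Delta$ is a generating function from which the $n^{X_{4,2}}_{0,d}$ are extracted as the $q^{d^2/8}$-coefficients). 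That is all the geometry supplies and all that equation \eqref{fd23} encodes, and it is what your argument actually proves; you should state the conclusion in that form rather than claim the full coefficientwise identity. With that reading, your proof is complete and, as you note, unconditional, since only the genus $0$ case of the Yau--Zaslow independence from \cite{KMPS} is used.
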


Such relations are produced by Theorem 1 for many classical examples.  
For any $1$-parameter family of $K3$ surfaces obtained via a toric complete intersection, there 
is an associated identity of special functions.
The relation obtained from the STU model studied in
\cite{KMPS} is the Harvey-Moore identity. In fact, the
Harvey-Moore identity is the {\em only} one for which a direct
proof (avoiding Theorem 1) is known. The proof is
due to Zagier and can be found in \cite{KMPS}.

\subsection{Proof of Corollary 2} \label{c22}
Let $\pi$ be the Lefschetz pencil of quartic $K3$ surfaces.
The difference between $NL^\pi_{h,d}$ and
the degree of 
$$\overline{\mathcal{D}}_{h,d}\subset \proj(\text{Sym}^4(V^*))$$ is simply the
contribution of the nodal quartics.
The nodal quartics contribute to $NL^\pi_{h,d}$ but not 
the hypersurface $\overline{\mathcal{D}}_{h,d}$.

Using the relation $NL^\pi_{h,d}= \frac{1}{2}NL^{\widetilde{\pi}}_{h,d}$,
we can study instead the doubled family.
The Picard lattice of each of the 108 fibers of $\widetilde{\pi}$
corresponding to the
original nodal fibers of $\pi$ is
\begin{equation} \label{n2k}
\left( \begin{array}{cc}
4 & 0  \\
0 & -2  \end{array} \right).
\end{equation}
We use here the genericity of the Lefschetz pencil $\pi$.

The equation 
$\lan \beta, L \ran=d$ is solvable in the lattice
\eqref{n2k} if and only if $d$ is divisible by $4$.
Then, 
$\lan \beta, \beta \ran = 2h-2$ is solvable
if and only if 
$$4(\frac{d}{4})^2-2 n^2 =2h-2$$
in which case there are two solutions.
In the solvable cases,
$$\bigtriangleup_4(h,d)= 8n^2.$$
Hence, the contribution of the nodal fiber to the
Noether-Lefschetz numbers of $\widetilde{\pi}$ is
$$\Psi(q)= 108\cdot 2 \sum_{n>0} q^{n^2}.$$
The Corollary follows by halving. \qed

\section{Direct Noether-Lefschetz calculations}
\label{dnl}

\subsection{Overview}
We apply Corollary $3$ to directly study 
$K3$ surfaces of low degree via a more sophisticated approach
to modular forms.
The key idea is to
construct a basis of the space of vector-valued modular forms
of Corollary 3 instead of working with the much larger
space of scalar-valued modular forms as in Section \ref{qqk3}.
For many classical families, the dimensions of the
associated spaces of vector-valued modular forms are very small. 
The Noether-Lefschetz numbers can often be specified by
a few classical calculations.  In particular, we see another derivation of Theorem 2.

\subsection{Rankin-Cohen brackets}
Since each component of a vector-valued modular form is a half-weight modular 
form of level $2l$, we can use a basis of the latter
 to construct all vector-valued modular forms.  
In practice, however, the method is tedius since the
dimensions of the spaces of scalar-valued modular forms are
much larger.
We will instead apply the following shortcut for low degree $K3$ surfaces.

Let
$f(q)$ and $g(q)$ be scalar-valued level $N$ modular forms on the
upper-half plane $\mathcal{H}$ 
 of
 weights $k_1$ and $k_2$ respectively. 
For each integer $n\geq 0$, the $n$-th Rankin-Cohen bracket is a bilinear differential operator defined by the expression
$$[f(q), g(q)]_{n} = \sum_{r=0}^{n} (-1)^{r} \binom{n+k_1-1}{n-r} \binom{n+k_2-1}{r} f^{(r)}(q)\cdot g^{(n-r)}(q),$$
where $f^{(r)}$ denote $r$ applications of the differential operator 
$$\frac{d}{d\tau} = q\frac{d}{dq}\ .$$
For $n=0$, the $0$-th bracket is just multiplication.

The key feature of Rankin-Cohen brackets is the preservation of modularity.
Suppose we are given a representation $\rho$ of $Mp_{2}(\ZZ)$ on $V$, a modular form
$f \in \mathrm{Mod}(Mp_2(\ZZ),k_1,\rho)$ of weight $k_1$ and type $\rho$, and a scalar-valued modular form $g \in \mathrm{Mod}(SL_2(\ZZ), k_2)$ of weight $k_2$ and level $1$.  Let
$$f(q) = \sum_{\gamma} f_{\gamma}(q) v_{\gamma} \in V$$
denote the decomposition of $f$ into components with respect to some basis of $V$.
For each integer $n \geq 0$, the Rankin-Cohen bracket 
is a holomorphic function on $\mathcal{H}$ with values in $V$ defined by
$$[f,g]_{n}(q) = \sum_{\gamma} [f_{\gamma}(q), g(q)]_{n} v_{\gamma}.$$
We then have the following result.

\begin{Lemma}\label{rcbrackets}
$[f,g]_{n}(q) \in \mathrm{Mod}(Mp_2(\ZZ), k_1+k_2+2n,\rho)$.
\end{Lemma}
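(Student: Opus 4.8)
The plan is to reduce the statement to the classical $SL_2$-equivariance of the Rankin--Cohen bidifferential operator, and then to dispose of the vector-valued components by bilinearity and of the metaplectic twist by a one-line check. First I would record the underlying formal fact: for each $n\geq 0$ and each pair of weights $(k_1,k_2)$, the bilinear operator $RC_n$ sending holomorphic functions $(F_1,F_2)$ on $\mathcal{H}$ to $\sum_{r=0}^n(-1)^r\binom{n+k_1-1}{n-r}\binom{n+k_2-1}{r}F_1^{(r)}F_2^{(n-r)}$ intertwines the slash actions, i.e. $RC_n(F_1|_{k_1}\gamma,\,F_2|_{k_2}\gamma)=RC_n(F_1,F_2)|_{k_1+k_2+2n}\gamma$ for $\gamma\in SL_2(\R)$, where $(F|_k\gamma)(\tau)=(c\tau+d)^{-k}F(\gamma\tau)$. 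This is the classical Rankin--Cohen theorem; I would either cite it (Rankin, Cohen; Zagier's lectures on modular forms and differential operators) or sketch the standard proof by induction on $n$ via the Cohen--Kuznetsov generating series. The only point needing care is that it survives passage to half-integral weight: with $\phi(\tau)=\pm\sqrt{c\tau+d}$ one has $\phi'=c/(2\phi)$, hence $\frac{d}{d\tau}\phi^{-2k}=-kc\,\phi^{-2k-2}$, which is formally identical to $\frac{d}{d\tau}(c\tau+d)^{-k}$, so every step goes through verbatim with $\phi(\tau)^{-2k}$ in place of $(c\tau+d)^{-k}$ and the equivariance holds for the metaplectic slash action $(F|_k(A,\phi))(\tau)=\phi(\tau)^{-2k}F(A\tau)$.

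Next I would fix the basis $\{v_\gamma\}$ of $V$, write $f=\sum_\gamma f_\gamma v_\gamma$, and for $\tilde A=(A,\phi)\in Mp_2(\ZZ)$ with $A=\left(\begin{smallmatrix}a&b\\c&d\end{smallmatrix}\right)$ read off the component form of the hypothesis, $f_\gamma(A\tau)=\phi(\tau)^{2k_1}\sum_\delta\rho(\tilde A)_{\gamma\delta}f_\delta(\tau)$, together with $g(A\tau)=\phi(\tau)^{2k_2}g(\tau)$ since $g$ is level $1$ of integral weight $k_2$. Applying the equivariance of $RC_n$ to the pair $(f_\gamma,g)$ and then using linearity in the first slot (with $g$ fixed) gives
\[
[f_\gamma,g]_n(A\tau)=\phi(\tau)^{2(k_1+k_2+2n)}\,RC_n\big(\textstyle\sum_\delta\rho(\tilde A)_{\gamma\delta}f_\delta,\ g\big)(\tau)=\phi(\tau)^{2(k_1+k_2+2n)}\sum_\delta\rho(\tilde A)_{\gamma\delta}\,[f_\delta,g]_n(\tau),
\]
which repackages exactly as $[f,g]_n(A\tau)=\phi(\tau)^{2(k_1+k_2+2n)}\rho(\tilde A)\big([f,g]_n(\tau)\big)$. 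Since $T$ and $S$ generate $Mp_2(\ZZ)$ it suffices to have this for all group elements, which we do. The reason the vector-valued case is no harder than the scalar one is simply that $\rho(\tilde A)$ is a constant matrix in $\tau$, so it commutes past the differential operator.

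Finally I would verify the behaviour at the cusp: the $q$-expansion of $f$ is supported in nonnegative powers of $q^{1/R}$ and that of $g$ in nonnegative integral powers of $q$; since $q\frac{d}{dq}$ multiplies the coefficient of $q^a$ by $a\geq 0$ it preserves nonnegativity of exponents, and so do products, hence each $[f_\gamma,g]_n$ has $q$-expansion supported in nonnegative exponents, i.e. $[f,g]_n$ is holomorphic at infinity. As $SL_2(\ZZ)$ has a single cusp, this together with the transformation law, and the fact that $[f,g]_n$ is manifestly holomorphic as a $V$-valued function on $\mathcal{H}$, shows $[f,g]_n\in\mathrm{Mod}(Mp_2(\ZZ),k_1+k_2+2n,\rho)$. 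The one genuine input is the classical equivariance of $RC_n$; everything else is bookkeeping, so I expect the main (and fairly mild) obstacle to be confirming that that equivariance is undisturbed by the half-integral weight and the metaplectic cocycle, which is the point handled in the first paragraph.
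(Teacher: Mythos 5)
Your proposal is correct and follows essentially the same route as the paper: the paper's proof simply cites Zagier for the scalar-valued equivariance of the Rankin--Cohen bracket and observes that, since $g$ is scalar-valued of level $1$, the argument carries over to the vector-valued setting unchanged. You supply more detail than the paper does --- the explicit check that the metaplectic cocycle $\phi(\tau)^{-2k}$ differentiates like $(c\tau+d)^{-k}$, the componentwise bookkeeping using constancy of $\rho(\tilde A)$ in $\tau$, and holomorphy at the cusp --- but the underlying idea is identical.
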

\begin{proof}
For scalar-valued modular forms, a proof is given in \cite{zagier}. 
Since $g$ is scalar-valued and level $1$, 
the same argument translates to the vector-valued context without change.
\end{proof}

\subsection{Bases of modular forms}

Following the notation of Corollary 3, 
we now look for modular forms of weight 21/2 and type $\rho_{l}^{*}$ 
for even 
$$l = 2, 4, 6, 8\ .$$
From the dimension formula given in Section \ref{picr} below,   
$$\dim(\mathrm{Mod}(Mp_{2}(\ZZ), 21/2,\rho_{l}^{*})) = 2, 3, 4, 5$$
for $l = 2, 4, 6, 8$ respectively.  We are only interested{\footnote{The
cusp condition is obtained from Borcherds' results and
was omitted in the statement of Corollary 3 for simplicity.}} in the subspace 
$$\mathrm{Mod}_0(Mp_2(\ZZ),21/2, \rho_{l}^{*})$$ 
of forms
$\sum f_{i}(q) v_{i}$ where $f_r(q)$ is a cusp form for $r \ne 0$.  In the 
$l=8$ case, we have a $4$-dimensional subspace.

We can use Rankin-Cohen brackets to construct explicit bases.
Indeed, for each $l$, there is a 
canonical weight $1/2$ modular form 
given by the Siegel theta function (see \cite{borch2}, Section $4$),
$$\theta^{(l)}(q) = \sum_{i=0}^{l-1} \sum_{s\in \mathbb{Z}}q^{\frac{(ls+i)^{2}}{2l}} v_{i} \in\mathrm{Mod}(Mp_{2}(\ZZ), 1/2, \rho_{l}^{*}).$$
Therefore, for $n= 0,1,2,3$, Lemma \ref{rcbrackets} gives us a modular form, 
$$F^{l}_{n}(q) = [\theta^{(l)}(q), E_{10-2n}(q)]_{n} 
\in \mathrm{Mod}(Mp_{2}(\ZZ), 21/2,\rho_{l}^{*}),$$
of weight $21/2$
where $E_{2k}(q)$ denotes Eisenstein series of weight $2k$.

Using the explicit formula for Rankin-Cohen brackets
and the dimension formula, the following Lemma 
is obtained by calculating the initial Taylor coefficients.
\begin{Lemma}
For $l=2,4,6$, the modular forms
$$F^{l}_{n}(q) = [\theta^{(l)}(q), E_{10-2n}(q)]_{n}, n = 0, \dots, l/2$$
form a basis of $ \mathrm{Mod}(Mp_{2}(\ZZ), 21/2,\rho_{l}^{*})$.  
For $l=8$, the modular
 forms for $n= 0, \dots, 3$ form a basis of the subspace $\mathrm{Mod}_{0}(Mp_2(\ZZ),21/2, \rho_{l}^{*})$.
\end{Lemma}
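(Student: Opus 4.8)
The plan is to reduce the whole statement to a dimension count together with a finite linear-independence check, using the modularity of Rankin--Cohen brackets. First, since $\theta^{(l)}(q)$ has weight $1/2$ and type $\rho_l^*$ and each normalized Eisenstein series $E_{10-2n}(q)$ has weight $10-2n$ and level $1$, Lemma~\ref{rcbrackets} gives at once
$$F^l_n \in \mathrm{Mod}\bigl(Mp_2(\ZZ),\, \tfrac12+(10-2n)+2n,\, \rho_l^*\bigr) = \mathrm{Mod}\bigl(Mp_2(\ZZ),\, \tfrac{21}{2},\, \rho_l^*\bigr)$$
for all $n$ and all $l$. So for $l=2,4,6$ the $F^l_n$ already live in the target space, and the number of brackets $F^l_0,\dots,F^l_{l/2}$ is exactly $l/2+1$, which by the dimension formula recalled in Section~\ref{picr} equals $\dim\mathrm{Mod}(Mp_2(\ZZ),21/2,\rho_l^*)$. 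Likewise for $l=8$ there are four brackets $F^8_0,\dots,F^8_3$, matching $\dim\mathrm{Mod}_0(Mp_2(\ZZ),21/2,\rho_8^*)=4$. Hence in each case it suffices to prove the listed forms are linearly independent.

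For $l=8$ one must additionally verify that each $F^8_n$ lies in the cuspidal subspace $\mathrm{Mod}_0$, i.e.\ that its $v_\gamma$-components with $\gamma\neq 0$ vanish at the (unique) cusp of $Mp_2(\ZZ)$. This is immediate: for $\gamma\neq 0$ the component $\theta^{(8)}_\gamma$ has strictly positive $q$-order, since $\min_{s}(8s+\gamma)^2/16>0$; the operator $q\,d/dq$ never lowers the $q$-order and strictly raises it past $0$, so every iterated derivative $(\theta^{(8)}_\gamma)^{(r)}$ again has positive order; and therefore $[\theta^{(8)}_\gamma,E_{10-2n}]_n$, being a sum of products each carrying a factor of positive order, has positive $q$-order as well.

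For linear independence I would argue directly on Fourier coefficients, avoiding any valence bound: for each $l$ it is enough to choose $l/2+1$ (resp.\ $4$) pairs (component $v_\gamma$, exponent $q^a$) for which the square matrix of the corresponding Fourier coefficients of the $F^l_n$ is nonsingular; a relation $\sum_n c_n F^l_n=0$ then kills every row and forces $c_n=0$. These coefficients are completely explicit from the Rankin--Cohen formula: the coefficient of $q^a$ in the $v_\gamma$-component of $F^l_n$ is a polynomial in the binomial weights and in the numbers $m^r$ produced by $(q\,d/dq)^r$, paired against the classical Fourier coefficients of $\theta^{(l)}$ and of $E_{10-2n}$. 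A convenient way to organize the check is to note that the constant term of $[\theta^{(l)},E_{10-2n}]_n$ in the $v_0$-component is nonzero for $n=0$ (it is $1\cdot 1$) but vanishes for every $n\ge 1$ by the derivative structure of the bracket; this slot isolates $F^l_0$, and one continues with low-order coefficients in the remaining components to obtain a visibly triangular, hence invertible, matrix. This last step — selecting the slots and doing the arithmetic in each of the four cases — is the only real content of the argument, and is the point referred to in the statement as ``calculating the initial Taylor coefficients''; everything else is formal, so this is where I expect the only genuine work to lie.
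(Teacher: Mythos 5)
Your proposal is correct and follows essentially the same route as the paper, which itself only says the Lemma ``is obtained by calculating the initial Taylor coefficients'' together with the dimension formula of Section~\ref{picr}. You in fact supply more detail than the paper does (the weight count, the cuspidality of the $v_\gamma$-components for $l=8$, and the triangular structure of the coefficient matrix), and the remaining arithmetic you defer is exactly the finite check the paper also leaves implicit.
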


\subsection{Classical families of $K3$ surfaces}

A general $K3$ surface of degree $l=2,4,6, 8$ is either a 
branched cover of $\proj^2$ (for $l=2$) or a complete intersection 
in projective space.  We obtain 1-parameter families of 
quasi-polarized $K3$ surfaces of degree $l$ by taking a 
generic Lefschetz pencil of these constructions 
(and resolving singularities as discussed in Section \ref{quar}).
Because the space of vector-valued
forms is of low dimension, 
we only need a few classical constraints to 
completely determine the associated modular form.
In fact, we will use only the following constraints:
\begin{enumerate}
\item[(i)] 
the degree of the Hodge bundle $R^{2}\pi_{*}\mathcal{O}$
(the coefficient of $q^{0}v_{0}$),
\item[(ii)] the number of nodal fibers (the coefficient of $q^{1}v_{0}$),
\item[(iii)]
vanishing  obtained from  Castelnuovo's bound 
in Lemma \ref{castelnuovo} below.
\end{enumerate}

The following result is a special case of Castelnuovo's bound for
projective curves
\cite{acgh}.

\begin{Lemma}\label{castelnuovo}
Given a $K3$ surface with very ample bundle $L$ and an primitive
 curve class $\beta$,
we have the inequality 
$$\lan\beta,\beta\ran \leq 2\binom{L\cdot\beta -1}{2} - 2\ .$$
\end{Lemma}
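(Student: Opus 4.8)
The plan is to combine the adjunction formula on the $K3$ surface with the classical Castelnuovo genus bound for curves in projective space, whose extremal case is attained by plane curves. First I would reduce to the situation in which $\beta$ is the class of an effective divisor $C$ on $S$: the remaining cases, where $\beta$ or $-\beta$ fails to be effective, are elementary, being settled by Riemann--Roch on $S$ together with the Hodge index theorem and a direct comparison against the binomial coefficient. Assuming then that $C$ is effective with $[C] = \beta$, put $d = L\cdot C$. Since $K_S = 0$ on a $K3$ surface, adjunction gives $p_a(C) = 1 + \frac{1}{2}\lan C,C\ran = 1 + \frac{1}{2}\lan\beta,\beta\ran$, where $p_a(\cdot)$ denotes arithmetic genus, so the asserted inequality is exactly the genus estimate $p_a(C)\le\binom{d-1}{2}$.

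Because $L$ is very ample, it embeds $S$, and hence $C$, into a projective space $\proj^N$ as a purely one-dimensional subscheme of degree $d$; note that $C$ has no embedded points, being an effective divisor on the smooth surface $S$. It then suffices to show that any such curve has arithmetic genus at most $\binom{d-1}{2}$, and I would do this by a general linear projection to $\proj^2$. Since the secant variety of $C$ has dimension at most $3$, a general center of projection meets only finitely many secant lines, so the projection restricts to a birational morphism on each component of $C$; its image $\overline{C}$ is therefore a plane curve of degree $d$, whence $p_a(\overline{C}) = \binom{d-1}{2}$. Finally $p_a(C)\le p_a(\overline{C})$, because a birational morphism preserves the geometric genera of the components while the total $\delta$-invariant of the singularities can only increase --- generic projection does not improve singularities and merely introduces extra nodes at the finitely many pairs of points identified by the projection. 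This gives $p_a(C)\le\binom{d-1}{2}$, as wanted. Alternatively, one can use the standard structure of linear systems on $K3$ surfaces to reduce to the case that $C$ is a smooth irreducible curve and then quote Castelnuovo's bound verbatim from \cite{acgh}.

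The one step that calls for genuine care is this last one: passing from a smooth irreducible $C$, where the estimate is the textbook Castelnuovo inequality and there is nothing to do, to an arbitrary effective divisor in the class $\beta$, and in particular verifying that the total $\delta$-invariant does not drop under the generic projection to $\proj^2$. It is also here that the very ampleness of $L$ is essential: for a merely ample $L$ one would have to replace $L$ by a very ample power $L^{k}$ and would obtain only the weaker bound $\lan\beta,\beta\ran \le 2\binom{kd-1}{2}-2$.
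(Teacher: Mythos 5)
Your argument is correct in substance, but it is worth noting that the paper does not actually prove this lemma: it is stated as ``a special case of Castelnuovo's bound for projective curves'' with a citation to \cite{acgh}, where the bound is established for integral nondegenerate curves. Your proposal therefore supplies something the paper leaves implicit, namely an argument valid for an \emph{arbitrary} effective divisor representing $\beta$ --- and this is genuinely needed, since a primitive effective class on a $K3$ surface need not have an irreducible (or even reduced) representative, e.g.\ the sum of two disjoint lines on a quartic. Your route uses only the degenerate ``plane curve'' case of Castelnuovo, $p_a \le \binom{d-1}{2}$, via adjunction ($K_S=0$ gives $p_a(C)=1+\tfrac12\lan\beta,\beta\ran$) and a general linear projection to $\proj^2$; the cleanest way to finish is the exact sequence $0\to\oh_{\overline C}\to\pi_*\oh_C\to Q\to 0$ with $Q$ punctual, giving $\chi(\oh_C)\ge\chi(\oh_{\overline C})$ and hence $p_a(C)\le p_a(\overline C)=\binom{d-1}{2}$ --- this phrasing also covers non-reduced $C$, where your language of geometric genera and $\delta$-invariants of components does not literally apply. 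Two caveats: the ``alternative'' reduction to a smooth irreducible member of $|\beta|$ via Saint-Donat does \emph{not} work for all primitive effective classes (linear systems with base components spoil it), so the projection argument is the one to keep; and for $L\cdot\beta\ge 4$ the inequality already follows from the Hodge index theorem, so the genuine content is concentrated in $L\cdot\beta\le 3$, which is exactly the range in which the paper applies the lemma. Your closing remark that very ampleness (rather than mere ampleness) is essential is correct and consistent with how the lemma is invoked in Section \ref{dnl}.
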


We now apply these constraints for 1-parameter families of $K3$
given by Lefschetz pencils for $l=2,4,6,8$.

\vspace{6 mm}

\noindent $\bullet$  \textit{Degree $2$ $K3$ surfaces}
\vspace{6 mm}

A generic $K3$ surface of degree 2
 is a double cover of $\proj^{2}$ branched along a nonsingular
 sextic plane curve.  
Consider
a family 
$$R\subset \proj^1 \times \proj^2$$ 
of sextics defined by a generic hypersurface of
type $(2,6)$.
Let $X$ be 
the double cover of $\proj^1 \times \proj^2$
ramified over $R$.
Since all the singular fibers of
$$R\rarr\proj^1$$ 
are irreducible and 
nodal, 
the associated family  
$$\pi: X \rarr \proj^1$$
of $K3$ surfaces is
smooth except for finitely many fibers with nodal singularities.

The degree of the Hodge bundle is $-1$ by a Riemann-Roch calculation. 
 The number of nodal fibers of $\pi$
is $150$, twice
the degree of the discriminant locus of sextics.  
Since we have a 2-dimensional space of forms, 
the
generating series of 
 Noether-Lefschetz numbers is the vector-valued modular form
$$\overrightarrow{\Theta}(q) = -F^{(2)}_{0}(q) - \frac{1}{2}F^{(2)}_{1}(q).$$

In the case of $l=2$, the discriminant $\Delta$ of a rank $2$ lattice with degree $2$ polarization determines the coset class $\delta$ by $\delta = \Delta \mod 2$.  So there is no loss of information if we replace $\overrightarrow{\Theta}(q)$ by the sum of the components 
$\Theta(q) = \overrightarrow{\Theta}_0 + \overrightarrow{\Theta}_1$.

If we consider the theta functions
$$U = \sum_{n \in \ZZ} q^{n^{2}/4},\ \ \
 V = \sum_{n\in\ZZ}(-1)^{n}q^{n^{2}/4},$$
we can express $\Theta$ as a polynomial function of $U$ and $V$:
\begin{align*}
\Theta(q) &= \frac{1}{1024}(U^{21} - 12 U^{17} V^4 -402 U^{13} V^{8} - 572 U^{9}V^{12} - 39U^{5}V^{16})\\
&=  -1 + 150 q + 1248 q^{5/4} + 108600 q^2 + 332800 q^{9/4} + 5113200 q^3\cdots.
\end{align*}
To see equivalence of the two expressions, we  observe 
 both are modular forms of weight $21/2$ with respect to $\Gamma(4)$ and check 
the agreement of sufficiently many coefficients. 
\vspace{6 mm}

\noindent
$\bullet$  \textit{Degree $4$ $K3$ surfaces}

\vspace{6 mm}

A generic $K3$ surface 
of degree 4 
is a quartic hypersurface in $\proj^3$.  
If we take a generic Lefschetz pencil of such quartics, 
the degree of the Hodge bundle is $-1$.  
Using Lemma \ref{castelnuovo}, the Noether-Lefschetz degrees
associated to the lattices
$$ \left(
  \begin{array}{ c c }
     4 & 1 \\
     1 & 0
  \end{array} \right), \ \ 
  \left(
  \begin{array}{ c c }
     4 & 2 \\
     2 & 0
  \end{array} \right)
$$
both vanish.  
Indeed, by choosing a generic pencil, we can assume
all fibers containing these Picard lattices have
very ample quasi-polarization.   
The coefficients of $q^0v_0, q^{1/8}v_{1},$ and $q^{1/2}v_{2}$ determine 
$$\overrightarrow{\Theta}(q) = - F^{(4)}_{0}(q) - \frac{5}{4}F^{(4)}_{1}(q) - \frac{16}{21}F^{(4)}_{2}(q).$$

Again, as in
the degree $2$ case, we can recover all Noether-Lefschetz degrees from 
$$\Theta(q) =  \overrightarrow{\Theta}_0(q) + \overrightarrow{\Theta}_1(q)+ \overrightarrow{\Theta}_2(q).$$
In terms of 
$$A = \sum_{n \in \ZZ} q^{n^{2}/8},\ \ \ 
 B = \sum_{n\in\ZZ}(-1)^{n}q^{n^{2}/8},$$
we recover the expression for $\Theta(q)$ given in Section \ref{mform}
since both are modular forms of weight  $21/2$ and level $8$ which 
agree on initial terms.
\vspace{6 mm}

\noindent $\bullet$  \textit{Degree $6$ $K3$ surfaces}

\vspace{6 mm}

A generic $K3$ surface of degree 6
 is the intersection of a quadric and cubic hypersurface in $\proj^4$.  
We have two basic families.  
We can fix a quadric and take a Lefschetz pencil of cubics or vice versa.  
In each case, we have vanishings associated to the lattices
$$ \left(
  \begin{array}{ c c }
     6 & 1 \\
     1 & 0
  \end{array} \right), \ \ 
  \left(
  \begin{array}{ c c }
     6 & 2 \\
     2 & 0
  \end{array} \right)
$$
from the Castelnuovo bound.
Along with the Hodge bundle degree and the number of nodal fibers, 
we completely determine the Noether-Lefschetz series.  

For the first family, the Hodge and nodal degrees are $-1$ and $98$ 
respectively. We obtain the series
$$\overrightarrow{\Theta}(q) = -F^{(6)}_{0}(q) - \frac{49}{24}F^{(6)}_{1}(q) - \frac{8}{3}F^{(6)}_{2}(q) - \frac{12}{5}F^{(6)}_{3}(q).$$
For the second family, the
Hodge and nodal degrees are $-1$ and $7$. We obtain the series
$$\overrightarrow{\Theta}(q) = -F^{(6)}_{0}(q) - \frac{17}{8}F^{(6)}_{1}(q) - \frac{22}{7}F^{(6)}_{2}(q) - \frac{18}{5}F^{(6)}_{3}(q).$$

One can read off other classical calculations from our results.  
For example, 
the number of surfaces containing elliptic plane curves or 
containing lines are the Noether-Lefschetz degrees associated to the lattices
$$
 \left(
  \begin{array}{ c c }
     6 & 3 \\
     3 & 0
  \end{array} \right),\ \ 
   \left(
  \begin{array}{ c c }
     6 & 1 \\
     1 & -2
  \end{array} \right)
$$
respectively.  In the first family, the degrees are $0$ and $168$
respectively. In the second family, the degrees are $10$ and $198$.  
In both cases, the numbers agree with earlier enumerative calculations.
\vspace{6 mm}

\noindent $\bullet$  \textit{Degree $8$ $K3$ surfaces}

\vspace{6mm}
A generic $K3$ surface of degree 8
is the intersection of three quadric hypersurfaces in $\proj^{5}$.  
The basic family comes from fixing two quadrics 
and allowing the third to vary in a Lefschetz pencil.  
Again, the series is determined by the Hodge term, 
the nodal term, and the two Castelnuovo vanishings from 
Lemma \ref{castelnuovo}.  The Hodge term is given by $-1$,
and the number of nodal fibers is $80$. We find
$$\overrightarrow{\Theta}(q) = -F^{(8)}_{0}(q) - \frac{49}{18}F^{(8)}_{1}(q) - \frac{128}{27}F^{(8)}_{2}(q) - \frac{256}{45}F^{(8)}_{3}(q).$$
Again, we can read off that the number of fibers containing 
a line is $128$, agreeing with the classical calculation.

\vspace{6 mm}
For all the classical examples discussed above, the mirror
symmetry calculation of the genus 0 Gromov-Witten 
invariants is solvable in terms of hypergeometric
functions. In each case, Theorem 1 yields a remarkable
identity with hypergeometric functions (after mirror
transformation) on the left and 
modular forms on the right, as in Section \ref{modid}.

The lower Noether-Lefschetz degrees in the above
classical examples can often be pursued by alternative methods.
In particular, matches with our modular form calculations 
have been found in \cite{R,V}.

\section{Picard rank of ${\mathcal{M}}_l$} \label{picr}
The Picard ranks of the moduli spaces
of quasi-polarized $K3$ surfaces
$\mathcal{M}_{l}$ are unknown. By an argument of O'Grady,
the ranks can grow arbitrarily large \cite{ogrady}.
Let 
\begin{equation}\label{tt23}
\mathrm{Pic}(\mathcal{M}_l)^{NL} \otimes \Q \subset
\mathrm{Pic}(\mathcal{M}_{l})\otimes \Q
\end{equation}
denote the span of the Noether-Lefschetz divisors $D_{h,d}$.
We make the following conjecture.

\begin{Conjecture}\label{hhhppp}
The inclusion is an isomorphism,
$$\mathrm{Pic}(\mathcal{M}_l)^{NL} \otimes \Q \cong
\mathrm{Pic}(\mathcal{M}_{l})\otimes \Q.$$
\end{Conjecture}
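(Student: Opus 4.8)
The plan is to identify the conjecture with an equality of two a priori different finite numbers --- the Picard rank of $\mathcal{M}_l$ and the rank of the span of the Noether--Lefschetz divisors --- both of which should equal $\dim_{\com}\mathrm{Mod}_0(Mp_2(\Z),\tfrac{21}{2},\rho_l^{\ast})$. The span side is largely known input. By Borcherds' modularity theorem \cite{borch}, with the rationality refinement of \cite{mcgraw}, the generating series $\Phi(q)$ of Section \ref{modularforms} is a $\mathrm{Pic}(\mathcal{M}_l)\otimes\com$-valued modular form of weight $21/2$ and type $\rho_l^{\ast}$; hence every linear relation among the Heegner divisors $y_{n,\gamma}$ imposed by this modularity actually holds in $\mathrm{Pic}(\mathcal{M}_l)$, which gives
\[
\dim_{\Q}\!\bigl(\mathrm{Pic}(\mathcal{M}_l)^{NL}\otimes\Q\bigr)\;\le\;\dim_{\com}\mathrm{Mod}_0\bigl(Mp_2(\Z),\tfrac{21}{2},\rho_l^{\ast}\bigr).
\]
A complete proof must upgrade this to an equality and then match it with the full Picard rank. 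For $l=2,4,6,8$ the right-hand side is the small space computed by Rankin--Cohen brackets in Section \ref{dnl}, where the conjecture can be checked against classical enumerative geometry.

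For the upper bound on the full Picard rank I would use that $\mathcal{M}_l=\mathcal{D}/\Gamma_M$ is a smooth orbifold quotient of a type IV domain of complex dimension $19$ by an arithmetic group whose image is Zariski-dense in $O(2,19)$, so that $\mathrm{Pic}(\mathcal{M}_l)\otimes\Q$ embeds in the algebraic part of $H^2(\mathcal{M}_l,\Q)$. By the Matsushima--Borel description of the cohomology of arithmetic quotients together with the Vogan--Zuckerman classification of the unitary representations contributing to $H^2$, the $(1,1)$-part of $H^2(\mathcal{M}_l,\com)$ splits as the line $\com\cdot c_1(\mathcal{L})$ spanned by the Hodge (automorphic) line bundle $\mathcal{L}=K^{\ast}=y_{0,0}$, a cuspidal summand which under the coherent-cohomology dictionary for automorphic vector bundles is governed by holomorphic vector-valued cusp forms of weight $21/2$ and type $\rho_l^{\ast}$, and a boundary summand. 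Passing to a toroidal compactification (the Baily--Borel boundary being a union of modular curves and points, of codimension at least $18$), the boundary contributes nothing in bidegree $(1,1)$ because its strata have dimension at most $1$. This would yield $\dim_{\Q}(\mathrm{Pic}(\mathcal{M}_l)\otimes\Q) \le \dim_{\com}\mathrm{Mod}_0(Mp_2(\Z),\tfrac{21}{2},\rho_l^{\ast})$, and, combined with the first paragraph and the evident inclusion $\mathrm{Pic}^{NL}\subseteq\mathrm{Pic}$, would sandwich all three quantities together and finish the proof.

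The main obstacle is making this cohomological step genuinely rigorous for $O(2,19)$, that is, excluding ``exotic'' algebraic classes. Two things must be secured. First, one must show that no piece of the residual Eisenstein spectrum, and no class coming from the finer geometry of the boundary modular curves in the toroidal model, produces an element of $\mathrm{Pic}(\mathcal{M}_l)\otimes\Q$ outside the trivial and cuspidal parts above --- a vanishing statement for certain degree-$2$ summands of the automorphic and boundary (intersection) cohomology, and exactly the ingredient not available in the literature in this signature, which is why the statement is only conjectural. Second, one must verify that the Borcherds theta lift is non-degenerate against the coherent cohomology computing the cuspidal part, so that the Heegner classes attain, rather than merely inject into, that space; this is in the spirit of Bruinier's work on Chern classes of Heegner divisors and would have to be checked to apply here. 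I expect the first point --- the automorphic and boundary vanishing that makes the upper bound on $\mathrm{Pic}$ sharp --- to be the genuine difficulty. As a consistency check, the predicted rank grows without bound with $l$, since $\mathrm{Mod}_0(Mp_2(\Z),\tfrac{21}{2},\rho_l^{\ast})$ does as the discriminant form $\Z/l\Z$ grows, in agreement with O'Grady's theorem \cite{ogrady}.
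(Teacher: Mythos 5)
This statement is a \emph{conjecture} in the paper: the authors offer no proof, only supporting evidence, namely Bruinier's theorem that the Heegner/Noether--Lefschetz span has dimension exactly $1+\dim\mathrm{Cusp}(Mp_2(\Z),21/2,\rho_l^{\ast})$, plus the agreement of that number with the independently computed Picard ranks of $\mathcal{M}_2$ and $\mathcal{M}_4$. So there is no proof in the paper to compare yours against, and your proposal --- as you yourself say --- is a research program rather than a proof. That said, your outline correctly locates where the difficulty lies, and it is in fact the strategy by which the conjecture was eventually settled (via Arthur's classification applied to the automorphic cohomology of orthogonal Shimura varieties); so as a roadmap it is sound.

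Two points of precision. First, your ``sandwich'' does not close as written: from $\dim\mathrm{Pic}^{NL}\le D$, $\dim\mathrm{Pic}\le D$ and $\mathrm{Pic}^{NL}\subseteq\mathrm{Pic}$ one cannot conclude equality; you also need the \emph{lower} bound $\dim\mathrm{Pic}^{NL}\ge D$. But that lower bound is not open --- it is exactly Bruinier's injectivity result quoted in Section \ref{picr} (every relation among Heegner divisors comes from Borcherds' theta lift when the lattice contains $U\oplus U$), so your ``second thing to be secured'' is already in the literature and the correct target dimension is $1+\dim\mathrm{Cusp}$ rather than $\dim\mathrm{Mod}_0$. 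Second, the genuinely missing ingredient is therefore only your first point: the upper bound $\dim\bigl(\mathrm{Pic}(\mathcal{M}_l)\otimes\Q\bigr)\le 1+\dim\mathrm{Cusp}$, i.e.\ the exclusion of exotic algebraic classes in $H^2$ coming from the residual spectrum or from the boundary of a compactification. Asserting that the $(1,1)$-part of $H^2$ is ``trivial plus cuspidal plus boundary'' with the boundary contributing nothing is precisely the theorem one would have to prove; invoking Matsushima--Borel and Vogan--Zuckerman names the toolbox but does not supply the vanishing. Since you flag this honestly, the proposal is a reasonable sketch of an attack on an open problem, but it does not constitute a proof, and no proof should be expected to match the paper, which offers none.
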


Bruinier has calculated
the dimension of the space $\mathrm{Pic}(\mathcal{M}_l)^{NL} \otimes \Q$
in equations (6-7) of \cite{bruinier}. 
If Conjecture \ref{hhhppp}  holds, we obtain a formula for the Picard rank of $\mathcal{M}_l$.

We now recount Bruinier's formula for the span of the Noether-Lefschetz
divisors.
By Borcherds' work, we have a map
\begin{equation} \label{dgj5}
\mathrm{Mod}(Mp_{2}(\Z), 21/2, \rho_{l}^{\ast})^{\ast} 
\rightarrow \mathrm{Pic}(\mathcal{M}_{l})\otimes \com.
\end{equation}
Let $\mathrm{Cusp}(Mp_{2}(\Z), 21/2, \rho_{l}^{\ast})$ denote the subspace
of cusp forms --- modular forms
for which the Fourier coefficients $c_{0,\gamma}$ vanish for all $\gamma$.
The map  \eqref{dgj5} induces a map
\begin{equation}
\label{ggyy55}
\mathrm{Cusp}(Mp_{2}(\Z),21/2, \rho_{l}^{\ast})^{\ast} \rightarrow 
(\mathrm{Pic}(\mathcal{M}_{l})\otimes\com)/ \com  K,
\end{equation}
where $K$ is the Hodge bundle on $\mathcal{M}_{l}$.
Bruinier shows the map \eqref{ggyy55}
 is injective \cite{bruinier}.  Specifically,
if $L$ is a $(2,n)$ lattice containing
two copies of $U$ as direct summands,
Bruinier shows that every relation among Heegner divisors is obtained from
Borcherds' theta lifting. Therefore,
$$\mathrm{dim} \ \mathrm{Pic}(\mathcal{M}_l)^{NL} \otimes \Q = 
1+ \mathrm{dim}\ \mathrm{Cusp}(Mp_{2}(\Z), \rho_{l}^{\ast}, 21/2).$$

A direct calculation of the dimension of the
space of cusp forms via Riemann-Roch  yields the following
evaluation
 \cite{bruinier}: 
\begin{eqnarray*}
\text{dim }\mathrm{Pic}(\mathcal{M}_{l})^{NL}\otimes \Q &=& \ \ 
1+\frac{31}{24} + \frac{31}{48}l 
-\frac{1}{8\sqrt{l}}\mathrm{Re}(G(2, 2l))\\
& & - \frac{1}{6\sqrt{3l}}\mathrm{Re}(e^{-2\pi i \frac{19}{24}}(G(1,2l) + G(-3, 2l))) \\ & & 
- \sum_{k=0}^{l/2} \left\{\frac{k^2}{2l}\right\} - C,
\end{eqnarray*}
where $G(a,b)$ denotes the quadratic Gauss sum
$$G(a,b) = \sum_{k=0}^{b-1} e^{-2\pi i\frac{ak^2}{b}},$$
the braces $\{,\}$ denote fractional part, 
and $C$ is the cardinality of the set
$$\left\{ k\  \left| \right. \  0 \leq k \leq \frac{l}{2}, \frac{k^2}{2l}\in \Z\right\}.$$
For $l=2,4,6$, the formula yields 
$$\text{dim }\mathrm{Pic}(\mathcal{M}_{l})^{NL}\otimes \Q = 2,3,4$$ 
respectively.
For $l=2$ and $4$, we have agreement
with the Picard ranks of $\mathcal{M}_l$
calculated in \cite{kirw,shah1, shah2}.
Hence, the inclusion \eqref{tt23} is an isomorphism in at least the
first two
cases.

\vspace{+14 pt}
\noindent
Department of Mathematics \\
Princeton University \\

\vspace{+8pt}
\noindent{\em Current addresses:}
\vspace{+10pt}

\noindent
Department of Mathematics \\
Columbia University \\
dmaulik@math.columbia.edu \\

\noindent
Departement Mathematik\\
ETH Z\"urich\\
rahul@math.ethz.ch

\end{document}